\documentclass[11pt]{article}
\usepackage[margin=1in]{geometry} 
\geometry{letterpaper}

\usepackage{amssymb,amsfonts,amsmath,bbm,mathrsfs,stmaryrd}
\usepackage{xcolor}
\usepackage{url}

\usepackage[colorlinks,
             linkcolor=black!75!red,
             citecolor=blue,
             pdftitle={Quantum rigidity of negatively curved manifolds},
             pdfauthor={Alexandru Chirvasitu},
             pdfproducer={pdfLaTeX},
             pdfpagemode=None,
             bookmarksopen=true
             bookmarksnumbered=true]{hyperref}

\usepackage{tikz}
\usetikzlibrary{arrows,calc,decorations.pathreplacing,decorations.markings,intersections,shapes.geometric,through,fit,shapes.symbols,positioning,decorations.pathmorphing}

\usepackage[amsmath,thmmarks,hyperref]{ntheorem}
\usepackage{cleveref}

\crefname{section}{Section}{Sections}
\crefformat{section}{#2Section~#1#3} 
\Crefformat{section}{#2Section~#1#3} 

\crefname{subsection}{\S}{\S\S}
\crefformat{subsection}{#2\S#1#3} 
\Crefformat{subsection}{#2\S#1#3} 

\theoremstyle{plain}

\newtheorem{lemma}{Lemma}[section]
\newtheorem{proposition}[lemma]{Proposition}
\newtheorem{corollary}[lemma]{Corollary}
\newtheorem{theorem}[lemma]{Theorem}

\theoremstyle{nonumberplain}
\newtheorem{theoremN}{Theorem}

\theoremstyle{plain}
\theorembodyfont{\upshape}
\theoremsymbol{\ensuremath{\blacklozenge}}

\newtheorem{definition}[lemma]{Definition}

\newtheorem{remark}[lemma]{Remark}

\crefname{definition}{definition}{definitions}
\crefformat{definition}{#2definition~#1#3} 
\Crefformat{definition}{#2Definition~#1#3} 

\crefname{ex}{example}{examples}
\crefformat{example}{#2example~#1#3} 
\Crefformat{example}{#2Example~#1#3} 

\crefname{remark}{remark}{remarks}
\crefformat{remark}{#2remark~#1#3} 
\Crefformat{remark}{#2Remark~#1#3} 

\crefname{convention}{convention}{conventions}
\crefformat{convention}{#2convention~#1#3} 
\Crefformat{convention}{#2Convention~#1#3}

\crefname{lemma}{lemma}{lemmas}
\crefformat{lemma}{#2lemma~#1#3} 
\Crefformat{lemma}{#2Lemma~#1#3} 

\crefname{proposition}{proposition}{propositions}
\crefformat{proposition}{#2proposition~#1#3} 
\Crefformat{proposition}{#2Proposition~#1#3} 

\crefname{corollary}{corollary}{corollaries}
\crefformat{corollary}{#2corollary~#1#3} 
\Crefformat{corollary}{#2Corollary~#1#3} 

\crefname{theorem}{theorem}{theorems}
\crefformat{theorem}{#2theorem~#1#3} 
\Crefformat{theorem}{#2Theorem~#1#3} 

\crefname{assumption}{assumption}{Assumptions}
\crefformat{assumption}{#2assumption~#1#3} 
\Crefformat{assumption}{#2Assumption~#1#3} 

\crefname{equation}{}{}
\crefformat{equation}{(#2#1#3)} 
\Crefformat{equation}{(#2#1#3)}

\theoremstyle{nonumberplain}
\theoremsymbol{\ensuremath{\blacksquare}}

\newtheorem{proof}{Proof}

\newtheorem{proof_of_fin_orb}{Proof of \Cref{th.fin_orb=>classical}}
\newtheorem{proof_of_main}{Proof of \Cref{th.main}}

\newtheorem{proof of sbgp}{Proof of \Cref{th.sbgp}}

\newcommand\bC{{\mathbb C}}

\newcommand\bR{{\mathbb R}}

\newcommand\cC{{\mathcal C}}

\newcommand\cM{{\mathcal M}}
\newcommand\cT{{\mathcal T}}
\newcommand\cW{{\mathcal W}}

\newcommand\ol{\overline}
\newcommand\wt{\widetilde}

\DeclareMathOperator{\id}{id}

\DeclareMathOperator{\pr}{\mathrm{Prob}}
\DeclareMathOperator{\orb}{\mathrm{Orb}}


\newcommand{\define}[1]{{\em #1}}

\newcommand{\cat}[1]{\textsc{#1}}

\newcommand{\qedhere}{\mbox{}\hfill\ensuremath{\blacksquare}}



\title{Quantum rigidity of negatively curved manifolds}
\author{Alexandru Chirvasitu\footnote{University of Washington, \url{chirva@uw.edu}}}

\begin{document}

\date{}

\maketitle

\begin{abstract}
We show that an isometric action of a compact quantum group on the underlying geodesic metric space of a compact connected Riemannian manifold $(M,g)$ with strictly negative curvature is automatically classical, in the sense that it factors through the action of the isometry group of $(M,g)$. This partially answers a question by D. Goswami. 
\end{abstract}

\noindent {\em Key words: compact quantum group, compact metric space, isometric coaction, Riemannian manifold, geodesic distance, negatively curved}

\tableofcontents

\section*{Introduction}

This paper is essentially a continuation of \cite{Chi15}, and it similarly deals with quantum symmetries of compact metric spaces. 

The framework is that of compact quantum groups, as introduced by Woronowicz in \cite{Wor87}. These are Hopf algebras with some additional structure and properties tailor-made to imitate algebras of sufficiently well-behaved functions on compact groups. They fit well within the wider scheme of non-commutative geometry \cite{Con94}, and should be thought of as algebras of functions on ``non-commutative spaces'' equipped with a group structure.  

Much of the theory of compact groups generalizes to the quantum setting, and the field has developed quite explosively. One prevalent point of view (and the one we adopt here) is that compact quantum groups should be thought of as implementing ``quantum symmetries'' of various algebraic or geometric structures. Examples of such structures for which this works well and produces interesting examples abound: finite-dimensional Hilbert spaces \cite{DaeWan96}, finite graphs \cite{Bic03}, operator algebras \cite{Boc95,Wan98,Wan99}, Riemannian manifolds \cite{BhoGos09} and so on. In fact, even as simple a structure as a finite set has interesting quantum automorphisms. This is the conclusion of \cite{Wan98}, where the quantum automorphism group of a finite space is shown to be strictly larger than the usual permutation group. 

One ubiquitous type of structure in geometry is that of a metric space. Banica first introduced in \cite{Ban05} the notion of isometric action of a quantum group on a finite metric space, and the concept was later extended to arbitrary compact metric spaces in \cite{Gos12}. In rough terms (to be made precise in \Cref{subse.prel_metric}) an action of a quantum group on a compact metric space $(X,d)$ is \define{isometric} if the distance function $d$, regarded as a function on $X\times X$, is invariant under a kind of diagonal action.

The concept of a distance function, which is global in nature, has an infinitesimal analogue: that of a Riemannian metric. Structure-preserving actions of quantum groups on Riemannian manifolds were defined earlier in \cite{BhoGos09}, and one striking phenomenon discovered in \cite{DasGos13} is that compact, connected Riemannian manifolds have no truly quantum symmetries: Any structure-preserving action by a quantum group on such a manifold factors through an isometric action of an ordinary compact group (we also say that the action is \define{classical}). 

A Riemannian manifold $X$ can be naturally made into a metric space by means of the geodesic distance. One is now confronted with two possible notions of quantum isometry: the global one of \cite{Gos12} and the infinitesimal one of \cite{BhoGos09}. They are only conjecturally equivalent, with the global notion being weaker. Taking a cue from \cite{DasGos13}, one can then ask the same type of rigidity question: Are all isometric quantum actions on the underlying metric space of a compact connected Riemannian manifold classical? 

One of the main results of the paper is that indeed they are under a curvature constraint (see \Cref{th.main}):

\begin{theoremN}
The underlying metric space of a compact, connected Riemannian manifold with negative sectional curvature  has no truly quantum symmetries.  
\end{theoremN}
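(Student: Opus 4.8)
The plan is to deduce the statement from the general principle, established earlier as \Cref{th.fin_orb=>classical}, that an isometric quantum action all of whose orbits are finite is automatically classical; the entire burden then shifts to proving that a quantum isometric action on a negatively curved compact manifold has finite orbits. I would begin by fixing an isometric coaction $\alpha\colon C(X)\to C(X)\otimes A$ (notation as in \Cref{subse.prel_metric}) of a compact quantum group with function algebra $A$, and recording the two consequences of preservation of the geodesic distance $d$ that drive everything: first, $\alpha$ preserves the Riemannian volume measure $\mu$, which for a negatively curved $M$ is, up to scale, the unique $\mathrm{Isom}(M,g)$-invariant probability measure; and second, $\alpha$ preserves every function of the form $(x,y)\mapsto f\big(d(x,y)\big)$, so that the integral operator $T_f$ on $L^2(X,\mu)$ with kernel $f(d(x,y))$ is equivariant for $\alpha$.

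The geometric input is negative curvature. Since negative sectional curvature forces $\mathrm{Ric}<0$, Bochner's theorem makes $\mathrm{Isom}(M,g)$ finite, and on the universal cover geodesics are unique while $d^2$ is smooth and strictly convex, so that the Riemannian metric $g$ is recovered from $d$ as the Hessian of $\tfrac12 d(x_0,\cdot)^2$ at $x_0$. I would exploit this rigidity as follows. Using the equivariant operators $T_f$, the joint spectral subspaces of the commuting family $\{T_f\}_f$ are $\alpha$-invariant; the content of \Cref{pr.key} should be that, in negative curvature, these joint eigenspaces are governed by the data of \emph{finite} metric configurations, so that the orbit equivalence relation attached to $\alpha$ has finite classes. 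Concretely, the metric characterization of midpoints — $m$ is a midpoint of $p,q$ iff $d(p,m)=d(m,q)=\tfrac12 d(p,q)$ — is, by uniqueness of geodesics, single-valued and preserved by $\alpha$, and iterating it together with the distance profiles $x\mapsto\big(r\mapsto \mu(B(x,r))\big)$ should pin each orbit inside a finite set.

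Having produced finite orbits, \Cref{th.fin_orb=>classical} closes the argument. As a cross-check one can instead aim to promote the metric coaction to a smooth one: the recovery $g=\mathrm{Hess}(\tfrac12 d(x_0,\cdot)^2)$ shows that preserving $d$ forces $\alpha$ to preserve $g$ and the smooth structure, placing us in the framework of \cite{BhoGos09,DasGos13}, whose rigidity theorem again yields classicality.

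I expect the main obstacle to be exactly the transfer of these metric-rigidity statements across the coaction: the map $\alpha$ is a priori only a homomorphism of $C(X)$ that sees the \emph{global} distance, so turning ``negative curvature makes the metric determine the local geometry and bound the admissible configurations'' into a genuine finiteness statement for quantum orbits — the substance of \Cref{pr.key} — is where the curvature hypothesis must do real work, and where the noncommutativity of $A$ most resists the classical comparison-geometry arguments.
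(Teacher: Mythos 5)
Your top-level reduction coincides with the paper's: classicality follows from \Cref{th.fin_orb=>classical} once one shows that negative curvature forces every orbit to be finite, and that finiteness statement (the paper's \Cref{th.main}) is where all the work lies. The problem is that your proposal does not actually prove it. The finiteness claim is delegated to an unproven proposition you call ``pr.key'': you assert that the joint eigenspaces of the equivariant integral operators $T_f$ ``should be governed by finite metric configurations,'' but you offer no mechanism converting spectral invariance, midpoint-preservation, or volume profiles $r\mapsto\mu(B(x,r))$ into a bound on the cardinality of an orbit --- and a priori a quantum orbit could perfectly well be a continuum; nothing in your sketch rules this out. (A side error: the Riemannian volume is \emph{not}, up to scale, the unique $\mathrm{Isom}(M,g)$-invariant probability measure; by Bochner the isometry group here is finite, so invariant probability measures abound, e.g.\ uniform measures on finite isometry-orbits.) By contrast, the paper's proof of \Cref{th.main} is a genuinely geometric bootstrap with no spectral input: \Cref{le.r+epsilon,le.close} show that finiteness of orbits propagates along geodesics, \Cref{pr.tot_geo} concludes that the set $X_{\cat{fin}}$ of finite-orbit points is a closed, totally geodesic submanifold, then \Cref{le.min_geo_inv,le.fin_nonempty,le.max_dist_fin} produce a point of $X_{\cat{fin}}$ by combining the finitely many \emph{minimal closed geodesics} (their finiteness and uniqueness in homotopy classes is where strict negativity of curvature enters) with the strict convexity, on the universal cover, of the distance to a totally geodesic submanifold; finally the same maximal-distance argument forces $X_{\cat{fin}}=X$. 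None of this machinery appears, even in outline, in your proposal, so the core of the theorem is missing.

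Your fallback route --- recover $g$ as the Hessian of $\tfrac12 d(x_0,\cdot)^2$, conclude that the coaction preserves the smooth and Riemannian structure, and invoke \cite{BhoGos09,DasGos13} --- is circular. The coaction is only a $C^*$-homomorphism $\cC(X)\to\cC(X)\otimes A$; there is no underlying map of points to differentiate, so the classical Myers--Steenrod argument does not transfer, and the passage from ``preserves the geodesic distance'' to ``is a smooth Riemannian-isometric action'' is exactly the conjectural equivalence of the global and infinitesimal notions of quantum isometry that the paper's introduction explicitly flags as open. Assuming it assumes away the theorem.
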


The curvature condition is crucial in the proof, but along the way we develop some geometric techniques that one might hope are of some interest in their own right and could be useful in studying quantum isometries of more general manifolds. 

One final observation is that the type of rigidity problem studied here, of when a quantum action is classical, has counterparts in somewhat different settings that suggest some general principles are at work. In other words, the `quantum rigidity' in the title (i.e. the absence of truly quantum symmetries) is a phenomenon of some scope and interest.  

To give just one example, a coaction of a sufficiently nice finite-dimensional Hopf algebra on a commutative domain is classical (this is the main result of \cite{EW14}). There do not seem to be direct implications between this and the previously-mentioned results, but the analogy is clear: \cite{EW14} is an algebro-geometric version of the rigidity theorems mentioned above, which are grounded in metric / Riemannian geometry.

The paper is organized as follows: 

\Cref{se.prel} contains preparatory remarks and results on compact quantum groups and their actions on (mostly classical) compact spaces. 

In \Cref{se.orbits} we study the notion of orbit for an action of a quantum group on a compact space. The concept is ambiguous, as the various definitions proposed in \cite{Hua12} are not known to coincide. Nevertheless, we prove that they do when the action in question is isometric. 

The main result of \Cref{se.criterion} (\Cref{pr.criterion}) introduces a sufficient condition for an isometric action of a quantum group on a metric space to factor through an action of an ordinary compact group (i.e. for the action to be classical). Intuitively, the condition demands that there be a point whose behavior under the action ``determines the action'' entirely. 

The typical situation to keep in mind is that of a faithful action $X\times G\to X$ of a finite group $G$ on a compact connected Hausdorff space $X$: There must be a point $x\in X$ with trivial isotropy group in $G$. This will then imply that elements of $G$ can be distinguished by what they do to $x$ alone.

The first main result of the paper is \Cref{th.fin_orb=>classical} in \Cref{se.fin_orb=>classical}. It states that if the metric space acted upon is the underlying geodesic metric space of a compact, connected Riemannian manifold and the action has finite orbits, then it is classical.

Finally, in \Cref{se.neg_curv} we show that if the Riemannian manifold acted upon isometrically by a quantum group has negative sectional curvature everywhere, then the finiteness condition in \Cref{th.fin_orb=>classical} is automatically satisfied. In conclusion, such actions are always classical.

\subsection*{Acknowledgements}

I am grateful to Debashish Goswami for many valuable discussions on the contents of \cite{Gos12}.

\section{Preliminaries}\label{se.prel}

We will assume some basic working knowledge of operator algebras ($C^*$ and von Neumann), as amply covered by any number of texts on the topic (say \cite{Tak02}). Unless specified otherwise, all algebras are unital and associative. 

Some notational conventions in place throughout the paper:

\begin{itemize}
  \item The tensor product symbol `$\otimes$' signifies the ordinary, algebraic tensor product when placed between algebras with no topological structure, and the minimal or injective $C^*$ tensor product when appearing between two $C^*$-algebras (\cite[Chapter IV, Definition 4.8]{Tak02}).
  \item For an algebra $A$, the symbol $A^*$ similarly changes meaning depending on the nature of $A$: If it is simply an algebra with no extra topological structure, then $A^*$ is the full dual, consisting of all linear functionals on $A$. If on the other hand $A$ is a $C^*$-algebra, then $A^*$ is the set of continuous linear functionals. 
  \item The set of \define{states} on a $C^*$-algebra $A$ (that is, unital, positive continuous functionals) is denoted by $S(A)$.
  \item Measures always live on metrizable compact Hausdorff spaces, so in particular they are regular. 
  \item For a space $X$ we will typically conflate points $x\in X$ and the corresponding Dirac delta measures $\delta_x$, and hence tacitly regard $X$ as a subspace of its space $\pr(X)$ of probability measures. 
\end{itemize}

\subsection{Quantum groups and actions}\label{subse.prel_cqg}

Recal the notion of compact quantum group in wide use throughout the literature (e.g. \cite{Wan98,QuaSab12,Gos12} or the survey \cite{KusTus99}). We will generally omit the word `compact' and simply refer to these objects as `quantum groups'.

\begin{definition}\label{def.cqg}
A \define{quantum group} $(A,\Delta)$ is a unital $C^*$-algebra endowed with a $C^*$-algebra homomorphism $\Delta:A\to A\otimes A$ satisfying the following properties:
\begin{enumerate}
\renewcommand{\labelenumi}{(\arabic{enumi})}
 \item $\Delta$ is coassociative, in the sense that the diagram
   \[
     \tikz[anchor=base]{
       \path (0,0) node (1) {$A$} +(2.5,.5) node (2) {$A\otimes A$} +(2.5,-.5) node (3) {$A\otimes A$} +(5.5,0) node (4) {$A\otimes A\otimes A$};
       \draw[->] (1) to[bend left=10] node[pos=.5,auto] {$\scriptstyle \Delta$} (2);
       \draw[->] (1) to[bend right=10] node[pos=.5,auto,swap] {$\scriptstyle \Delta$} (3);
       \draw[->] (2) to[bend left=10] node[pos=.3,auto] {$\scriptstyle \Delta\otimes\id$} (4);
       \draw[->] (3) to[bend right=10] node[pos=.3,auto,swap] {$\scriptstyle \id\otimes\Delta$} (4);
     }
   \]commutes;
 \item The subspaces
   \[
     \text{linear span}\{(a\otimes 1)\Delta(b)\ |\ a,b\in A\}
   \]
   and
   \[
     \text{linear span}\{(1\otimes a)\Delta(b)\ |\ a,b\in A\}
   \]
   are norm-dense in $A\otimes A$. 
\end{enumerate} 
\end{definition}

We will often abuse terminology by just writing $A$ for $(A,\Delta)$. 

Two functionals $\varphi,\psi\in A^*$ can be multiplied by means of the comultiplication: By definition, $\varphi\psi$ is the composition
\begin{center}
\begin{tikzpicture}[auto]
  \node (1) at (0,0) {$A$};
  \node (2) at (2,.5) {$A\otimes A$};
  \node (3) at (4,0) {$\bC$};
  \draw[->,bend left=15] (1) to node {$\scriptstyle \Delta$} (2);
  \draw[->,bend left=15] (2) to node {$\scriptstyle \varphi\otimes\psi$} (3);
  \draw[->,bend right=20] (1) to node [swap] {$\scriptstyle \varphi\psi$} (3);
\end{tikzpicture}
\end{center}  
This makes $A^*$ into a semigroup (that is, the multiplication is associative), and $S(A)$ is a sub-semigroup.

The so-called \define{Haar state} of a quantum group $(A,\Delta)$ plays the same role that the Haar measure does for classical compact groups: We have  
\begin{equation}\label{eq.haar}
 h\varphi=\varphi h=h,\ \forall \varphi\in A^*.
\end{equation} 
The existence of the Haar state is proven in \cite[4.1]{Wor87} in the separable case and in \cite[2.4]{Dae95} in general.

\begin{remark}\label{rem.red0}
If the Haar state $h$ is faithful, the quantum group is \define{reduced}. 

Every quantum group $A$ surjects onto a reduced one $A_r$: Simply take $A_r$ to be the $C^*$-algebra obtained by the GNS construction applied to $A$ and $h$.   
\end{remark}

\subsection{Actions on spaces}\label{subse.prel_sp}

Recall from the introduction that we think of quantum groups as implementing ``quantum symmetries'' of various mathematical objects. Here, these are operator algebras thought of as function algebras of possibly non-commutative spaces, perhaps equipped with some additional structure. For this reason, the notion of action to be used in the sequel is as follows:

\begin{definition}\label{def.cqg_act}
Let $(A,\Delta)$ be a quantum group, and $B$ a $C^*$-algebra. A \define{right coaction} of $A$ on $B$ is a $C^*$ homomorphism $\rho:B\to B\otimes A$ such that
\begin{enumerate}
\renewcommand{\labelenumi}{(\arabic{enumi})}
 \item The diagram
   \[
    \tikz[anchor=base]{
     \path (0,0) node (1) {$B$} +(2.5,.5) node (2) {$B\otimes A$} +(2.5,-.5) node (3) {$B\otimes A$} +(5.5,0) node (4) {$B\otimes A\otimes A$};
       \draw[->] (1) to[bend left=10] node[pos=.5,auto] {$\scriptstyle \rho$} (2);
       \draw[->] (1) to[bend right=10] node[pos=.5,auto,swap] {$\scriptstyle \rho$} (3);
       \draw[->] (2) to[bend left=10] node[pos=.3,auto] {$\scriptstyle \rho\otimes\id$} (4);
       \draw[->] (3) to[bend right=10] node[pos=.3,auto,swap] {$\scriptstyle \id\otimes\Delta$} (4);
    }   
   \]commutes;
 \item The subspace
   \[
     \text{linear span}\{(1\otimes a)\rho(b)\ |\ a\in A\ b\in B\}
   \]   
   is dense in $B\otimes A$.
\end{enumerate} 
\end{definition}

\begin{remark}\label{rem.red}
The quantum group $A$ coacting on $B$ can always be assumed to be reduced if convenient: If $\tau:A\to A_r$ is the surjection from \Cref{rem.red0}, then the composition 
    \begin{equation*}\label{eq.red}
      \begin{tikzpicture}[auto,baseline=(current  bounding  box.center)]
        \node (1) at (0,0) {$B$};
        \node (2) at (2,0) {$B\otimes A$};
        \node (3) at (4.5,0) {$B\otimes A_r$};
        \draw[->] (1) to node {$\scriptstyle \rho$} (2);
        \draw[->] (2) to node {$\scriptstyle \id\otimes \tau$} (3);
        \draw[->] (1) to[bend right=20] node[pos=.5,swap] {$\scriptstyle \rho_r$} (3);
      \end{tikzpicture}
    \end{equation*}
can be shown to be a coaction again. 
\end{remark}

Below, $B$ will mostly be commutative; in other words, the algebra of continuous functions on a compact Hausdorff space.

%
%
%

\begin{definition}\label{def.cqg_act_bis}
Let $(A,\Delta)$ be a $C^*$ quantum group and $X$ a compact Hausdorff topological space. A (right) action of $A$ on $X$ is a coaction of $A$ on $\cC(X)$ as in \Cref{def.cqg_act}. 
\end{definition}

\begin{remark}
\Cref{def.cqg_act_bis} is justified by the fact that when $A$ is the function algebra of an ordinary compact group $G$, \Cref{def.cqg_act} does indeed specialize to a right action of $G$ on $X$.
\end{remark}

Given an action $\rho$ be an action of $A$ on $X$ as in \Cref{def.cqg_act_bis}, let $\varphi\in A^*$. The composition
\begin{center}
\begin{tikzpicture}[auto]
  \node (1) at (0,0) {$\cC(X)$};
  \node (2) at (2,0) {$\cC(X)\otimes A$};
  \node (3) at (4.5,0) {$\cC(X)$};
  \draw[->] (1) to node {$\scriptstyle \rho$} (2);
  \draw[->] (2) to node {$\scriptstyle \id\otimes\varphi$} (3);
\end{tikzpicture}
\end{center}  
 is a self-map on $\cC(X)$, which we denote by $\varphi\triangleright$. As $\varphi$ ranges over $A^*$, these maps constitute a left action of the semiring $A^*$ on the vector space $\cC(X)$. The restriction of this action to the sub-semigroup $S(A)\subset A^*$ will again be denoted by $\triangleright$.  
 
The semiring $A^*$ similarly acts on the right on the Banach space $\cM(X)=\cC(X)^*$ of complex measures on $X$. The result $\mu\triangleleft\varphi$ of acting with $\varphi\in A^*$ on $\mu\in\cM(X)$ is the measure defined by   
\begin{center}
\begin{tikzpicture}[auto]
  \node (1) at (0,0) {$\cC(X)$};
  \node (2) at (2,.5) {$\cC(X)\otimes A$};
  \node (3) at (4,0) {$\bC$};
  \draw[->,bend left=10] (1) to node {$\scriptstyle \rho$} (2);
  \draw[->,bend left=10] (2) to node {$\scriptstyle \mu\otimes\varphi$} (3);
  \draw[->,bend right=20] (1) to node [swap] {$\scriptstyle\mu\triangleleft\varphi$} (3);
\end{tikzpicture}
\end{center}  
$\triangleleft$ restricts to an action (denoted by the same symbol) of $S(A)$ on $\pr(X)=S(\cC(X))$.

%
%
%
%

We also need to know what it means for a quantum group to act faithfully.

\begin{definition}\label{def.faithful}
A coaction $\rho:B\to B\otimes A$ of a quantum group $A$ on a $C^*$-algebra $B$ is \define{faithful} if the set
\begin{equation}\label{eq.faithful}
 \{(\mu\otimes\id)(\rho(b))\ |\ b\in B,\ \mu\in S(B)\}\subset A
\end{equation}
generates $A$ as a $C^*$-algebra.  
\end{definition}

\begin{remark}\label{rem.faithful}
It is mostly harmless to assume a coaction is faithful: The $C^*$-subalgebra of $A$ generated by \Cref{eq.faithful} is itself a compact quantum group coacting on $B$, and $\rho$ factors through this latter coaction (which is faithful). 
\end{remark}

With this in place, we can remind the reader that reduced quantum groups acting faithfully on a classical compact space are \define{of Kac type}, in the sense that they admit an antipode. In other words, there is a bounded, involutive, multiplication-reversing map $\kappa:A\to A$ that plays the same role as the map $f\mapsto f(\bullet^{-1})$ for continuous functions $f$ on an ordinary compact group. This is \cite[Theorem 3.23]{Hua12}, and we refer to that paper for details on this point.

\subsection{Isometric actions}\label{subse.prel_metric}

We now specialize the discussion to compact metric spaces $(X,d)$, reserving the symbol $\rho$ for an action by a quantum group $(A,\Delta)$ on $X$. We always assume that $\rho$ is faithful and hence $A$ admits an antipode $\kappa$. 

This section gathers some auxiliary results on distance-preserving actions that are needed below. First, recall the following notion (cf. \cite[Definition 3.3]{Gos12}; such actions are called (D)-isometric in \cite[Definition 1.6]{Chi15}):

\begin{definition}\label{def.D_isometric}
For $x\in X$, let $d_x\in\cC(X)$ the function defined by $d_x(y) = d(x,y)$. If $A$ is reduced, the action $\rho$ is \define{isometric} if 
\begin{equation}\label{eq.D_isometric}
 \rho(d_y)(x) = \kappa(\rho(d_x)(y)),\ \forall x,y\in X, 
\end{equation}
where for $f\in \cC(X)$, $\rho(f)(x)$ is the evaluation at $x$ on the left hand tensorand of the element $\rho(f)\in\cC(X)\otimes A$. 

In general, $\rho$ is isometric if its reduced version $\rho_r$ defined in \Cref{rem.red} is isometric in the sense of the above paragraph. 
\end{definition}

\begin{remark}
This is the quantum analogue of the usual requirement that 
\begin{equation*}\label{eq.classical_isometry}
 d(xg,y) = d(x,yg^{-1}),\ \forall x,y\in X,\ \forall g\in G. 
\end{equation*}
for a right action of a compact group $G$ on $X$. 
\end{remark}

In order to state the next theorem we need some terminology.

\begin{definition}
For probability measures $\mu$ and $\nu$ on $X$ the set $\Pi(\mu,\nu)$ of \define{$(\mu,\nu)$-couplings} is the set of measures $\pi\in \pr(X\times X)$ such that 
\[
 p_{1*}(\pi)=\mu,\quad p_{2*}(\pi)=\nu,
\] 
where $p_i$ are the projections $X\times X\to X$. 
\end{definition}

\cite[Theorem 3.1]{Chi15} (slightly paraphrased) now reads:

\begin{theorem}\label{th.old}
If an action $\rho$ of a quantum group $A$ on a compact metric space $(X,d)$ is isomeric, then for every two points $x,y\in X$ and every state $\psi\in S(A_r)$ there is a coupling $\pi\in\Pi(x\triangleleft\psi,y\triangleleft\psi)$ supported on 
\[
 \{(x',y')\in X\times X\ |\ d(x',y')=d(x,y)\}.
\]
\qedhere
\end{theorem}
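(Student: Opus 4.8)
The plan is to realize the desired coupling as a pushforward of the point mass $\delta_{(x,y)}$ under the \emph{genuine} coaction $\rho\otimes\rho$ of the product quantum group $A\otimes A$ on $\cC(X\times X)=\cC(X)\otimes\cC(X)$, paired with a well-chosen state on $A\otimes A$. First I would pass to the reduced, Kac-type version of $A$ (so the antipode $\kappa$ and the isometric identity \Cref{eq.D_isometric} are available). Writing $F_x(f):=\rho(f)(x)\in A$ and $G_y(g):=\rho(g)(y)\in A$, set $\Theta:=F_x\otimes G_y:\cC(X\times X)\to A\otimes A$. As a tensor product of unital completely positive maps $\Theta$ is itself unital completely positive, so for \emph{any} state $\Psi$ on $A\otimes A$ the recipe $\pi_\Psi(F):=\Psi(\Theta(F))$ defines a genuine probability measure on $X\times X$. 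Since $\rho(1)=1\otimes 1$, a one-line computation gives $\pi_\Psi(f\otimes 1)=\Psi\bigl(F_x(f)\otimes 1\bigr)$ and $\pi_\Psi(1\otimes g)=\Psi\bigl(1\otimes G_y(g)\bigr)$, so the marginals of $\pi_\Psi$ are exactly $x\triangleleft\psi$ and $y\triangleleft\psi$ precisely when both marginals of $\Psi$ equal $\psi$. The entire problem is thereby reduced to a good choice of $\Psi$.

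The second ingredient is an invariance property of the distance function. Writing $d=\sum_k a_k\otimes b_k$ in $\cC(X)\otimes\cC(X)$ (a limit, handled by continuity), I claim
\[
 \sum_k \rho(a_k)(x)\,\rho(b_k)(y)=d(x,y)\cdot 1_A,\qquad\forall\,x,y\in X.
\]
I would prove this by a direct Sweedler-notation manipulation of \Cref{eq.D_isometric}: coact a second time on the middle leg, collapse the resulting pair of $A$-legs by multiplication, and apply the antipode identity $\sum u_{(1)}\kappa(u_{(2)})=\varepsilon(u)1$ together with the counit axiom for the coaction; the symmetry $d(x,y)=d(y,x)$ is used once, to flip the two copies of $\cC(X)$ into the correct order. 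The identity \Cref{eq.D_isometric} is linear in the function acted on and is inherited by $d^2$ — evaluation at a point is a character on the commutative algebra $\cC(X)$ and $\kappa$ is $\ast$-antimultiplicative, so both respect squaring — hence the displayed invariance holds verbatim with $d$ replaced by $w:=(d-d(x,y))^2$. Consequently the positive element $Z:=\Theta(w)\in A\otimes A$ satisfies $m(Z)=0$, where $m:A\otimes A\to A$ is multiplication. Because $Z\ge 0$, every $\pi_\Psi$ above satisfies $\pi_\Psi(w)=\Psi(Z)\ge 0$, and $\pi_\Psi$ is supported on $\{(x',y'):d(x',y')=d(x,y)\}$ as soon as $\Psi(Z)=0$.

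It remains to produce a state $\Psi$ on $A\otimes A$ with both marginals equal to $\psi$ and with $\Psi(Z)=0$; equivalently, to show that $\min\bigl\{\Psi(Z): \Psi\in S(A\otimes A),\ \Psi(\,\cdot\otimes 1)=\Psi(1\otimes\,\cdot)=\psi\bigr\}$ vanishes. This is the crux, and the step I expect to be the main obstacle. The naive candidate $\Psi=\psi\circ m$ has the correct marginals and satisfies $\Psi(Z)=\psi(m(Z))=0$, but it fails to be positive once $A$ is noncommutative; this failure is exactly the non-existence of an honest diagonal coaction of $A$, and is the reason the statement has content. The hard part is thus to trade the formally correct but non-positive functional $\psi\circ m$ for a genuine state that still annihilates the positive element $Z$. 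The leverage available is the full family of invariance identities $\sum_k\rho(a_k^{\phi})(x)\,\rho(b_k^{\phi})(y)=\phi\bigl(d(x,y)\bigr)1_A$, valid for every continuous $\phi$ (with $\phi(d)=\sum_k a_k^{\phi}\otimes b_k^{\phi}$), together with the Kac structure — traciality of the Haar state $h$ and the relation $h\circ\kappa=h$ — which one expects to use to symmetrize $\psi\circ m$ into a bona fide state. Carrying this symmetrization out for an \emph{arbitrary} state $\psi$, rather than only for $h$, is where I anticipate the real difficulty lies.
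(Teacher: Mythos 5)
Your setup is sound as far as it goes: $\Theta=F_x\otimes G_y$ is a unital $*$-homomorphism, so $\Psi\circ\Theta$ is a probability measure for every state $\Psi$ on $A\otimes A$, the marginal computation is correct, and your diagnosis of why $\psi\circ m$ fails is exactly right. The difficulty is that the step you defer --- producing a state $\Psi$ with the prescribed marginals and $\Psi(Z)=0$ --- is not a technical remainder: it \emph{is} the theorem. Since $x\triangleleft\psi=\psi\circ F_x$ and $y\triangleleft\psi=\psi\circ G_y$ are supported on the orbits $\cM_x$ and $\cM_y$, any coupling of them is supported on $\cM_x\times\cM_y$, hence annihilates $\ker\Theta$ and factors as $\Psi\circ\Theta$ for some state $\Psi$ on $A\otimes_{\min}A$ (induce a state on the image $\Theta(\cC(X\times X))\cong\cC(\cM_x\times\cM_y)$ and extend); conversely any such $\Psi$ yields a coupling. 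So your reformulation is equivalent to the statement being proven (and, in the form you state it, with marginals equal to $\psi$ on all of $A$, a priori stronger), and the proposal makes no progress on it. The symmetrization hope has no visible content either: by Hahn--Banach duality, $\inf_\Psi\Psi(Z)=0$ over states with marginals $\psi$ amounts to showing that $a\otimes 1+1\otimes b\le Z$ forces $\psi(a)+\psi(b)\le 0$, and the only functional that would extract this from $m(Z)=0$ is precisely the non-positive $\psi\circ m$. The engine of the proof is missing.

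There are also unaddressed analytic problems in the invariance identity you rely on. The Sweedler manipulation is valid on the algebraic core (the Podle\'s subalgebra of $\cC(X)$ and the Hopf $*$-algebra of matrix coefficients), but $d$ need not lie in $\cC(X)\otimes_{\mathrm{alg}}\cC(X)$, the multiplication $m$ is \emph{not} bounded on $A\otimes_{\min}A$ --- so the expression $\sum_k\rho(a_k)(x)\rho(b_k)(y)$ is not even defined ``by continuity'' from approximants of $d$ --- and the counit your computation invokes need not be bounded on the reduced $C^*$-algebra. Note finally that this paper does not prove \Cref{th.old} at all; it imports it from \cite[Theorem 3.1]{Chi15}, where (as one can glimpse from the proof of \Cref{pr.orbits}) the argument is run at the von Neumann level via the Borel lift and the ball identities $a_{x;B(y,r)}=\kappa\left(a_{y;B(x,r)}\right)$, with the coupling built by measure-theoretic means rather than lifted to a state on $A\otimes A$. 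That route avoids exactly the two obstructions above, which your proposal leaves open.
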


\begin{remark}
Here, states of $A_r$ act on $\pr(X)$ via the reduced action $\rho_r$ from \Cref{rem.red}. Equivalently, it is the action of $S(A)$ on $\pr(X)$ (induced by $\rho$) restricted to $S(A_r)\subseteq S(A)$.   
\end{remark}

We make one last observation on what the conclusion of \Cref{th.old} entails. 

Recall first that the \define{Kantorovi\u c distance} on $\pr(X)$ \cite{Kan42,KanRub57} is an extension of $d$ defined by  
\begin{equation}\label{eq.KanRub}
 d(\mu,\nu) = \sup_{L(f)\le 1}(\mu(f)-\nu(f)),\ \forall\mu,\nu\in\pr(X),
\end{equation}
where $\mu(f)=\int_Xf\ \text{d}\mu$ and $L(f)$ is the Lipschitz constant of $f$:
\[
 L(f) = \sup_{x\ne y}\frac{|f(x)-f(y)|}{d(x,y)}.
\]

It is a result due to Kantorovi\u c and Rubinstein \cite{KanRub57} that the distance $d$ defined by \Cref{eq.KanRub} has another expression as
\begin{equation*}
d(\mu,\nu) = \inf_{\pi\in\Pi(\mu,\nu)}\int_{X\times X}d(x,y)\ \mathrm{d}\pi. 
\end{equation*}
As a consequence of this and \Cref{th.old} we have

\begin{corollary}\label{cor.old}
Under the assumptions of \Cref{th.old} we have
\[
 d(\mu\triangleleft\psi,\nu\triangleleft\psi)\le d(\mu,\nu) 
\] 
for all $\mu,\nu\in\pr(X)$ and $\psi\in S(A_r)$.
\qedhere
\end{corollary}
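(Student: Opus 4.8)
The plan is to establish the inequality first at the level of points (Dirac measures), where it is an immediate consequence of \Cref{th.old}, and then to bootstrap from points to arbitrary probability measures using the Kantorovi\u c--Rubinstein duality \Cref{eq.KanRub}.

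First I would record the pointwise estimate. Fix $x,y\in X$ and $\psi\in S(A_r)$. \Cref{th.old} furnishes a coupling $\pi\in\Pi(x\triangleleft\psi,y\triangleleft\psi)$ supported on $\{(x',y')\ :\ d(x',y')=d(x,y)\}$. Feeding this particular $\pi$ into the transport (infimum) expression for the Kantorovi\u c distance gives
\[
 d(x\triangleleft\psi,y\triangleleft\psi)\le\int_{X\times X}d(x',y')\,\mathrm{d}\pi = d(x,y),
\]
since $d(x',y')$ is constantly equal to $d(x,y)$ on the support of the probability measure $\pi$. Thus acting by $\psi$ contracts distances between points.

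The bridge from points to measures is the function $\psi\triangleright f=(\id\otimes\psi)\rho(f)\in\cC(X)$. Unwinding the definitions of $\triangleright$ and $\triangleleft$ one checks the two identities
\[
 (\psi\triangleright f)(x)=(x\triangleleft\psi)(f),\qquad (\mu\triangleleft\psi)(f)=\mu(\psi\triangleright f),
\]
the first by evaluating at $x$ and the second by bilinearity of $\mu\otimes\psi$. Combining the first identity with the pointwise estimate and the dual form \Cref{eq.KanRub} of the distance, for every $f\in\cC(X)$ and $x\ne y$ I get
\[
 |(\psi\triangleright f)(x)-(\psi\triangleright f)(y)| = |(x\triangleleft\psi)(f)-(y\triangleleft\psi)(f)|\le L(f)\,d(x\triangleleft\psi,y\triangleleft\psi)\le L(f)\,d(x,y),
\]
so that $L(\psi\triangleright f)\le L(f)$; in words, $\psi\triangleright$ does not increase Lipschitz constants.

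Finally I would assemble these pieces. For arbitrary $\mu,\nu\in\pr(X)$ the second identity gives $(\mu\triangleleft\psi-\nu\triangleleft\psi)(f)=(\mu-\nu)(\psi\triangleright f)$ for every $f$. Taking the supremum over all $f$ with $L(f)\le 1$ and using $L(\psi\triangleright f)\le 1$ to recognize $\psi\triangleright f$ as an admissible competitor in \Cref{eq.KanRub} for $d(\mu,\nu)$ yields
\[
 d(\mu\triangleleft\psi,\nu\triangleleft\psi)=\sup_{L(f)\le 1}(\mu-\nu)(\psi\triangleright f)\le d(\mu,\nu),
\]
which is the claim. The only real subtlety is the passage from points to measures: a naive argument would try to glue the pointwise couplings supplied by \Cref{th.old} into a single coupling of $\mu\triangleleft\psi$ and $\nu\triangleleft\psi$ by integrating against an optimal $\mu$--$\nu$ coupling, which forces one to confront the measurable dependence of the couplings on $(x,y)$. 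Routing the argument instead through the Lipschitz contraction of $\psi\triangleright$ and the duality \Cref{eq.KanRub} sidesteps this measurable-selection issue entirely, and is where I expect the main (if modest) difficulty to lie.
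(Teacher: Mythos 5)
Your proof is correct. Note that the paper itself offers essentially no argument for \Cref{cor.old}: it is stated as an immediate consequence of \Cref{th.old} together with the Kantorovi\u{c}--Rubinstein transport formula, which, taken literally, only yields the contraction $d(x\triangleleft\psi,y\triangleleft\psi)\le d(x,y)$ for Dirac measures --- exactly your first step. What you add is the bootstrap from points to arbitrary $\mu,\nu\in\pr(X)$, and your route through the dual picture --- showing that $f\mapsto\psi\triangleright f$ does not increase Lipschitz constants and then feeding this into the supremum formula \Cref{eq.KanRub} --- is a clean way to close that gap. You are also right that the alternative completion, gluing the pointwise couplings from \Cref{th.old} along an optimal $(\mu,\nu)$-coupling, raises a measurable-selection question (one needs $(x,y)\mapsto\pi_{x,y}$ to be chosen measurably before integrating), which your argument sidesteps entirely; this is a genuine, if modest, advantage of the duality route. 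One cosmetic point worth adding: \Cref{eq.KanRub} concerns real-valued Lipschitz functions, so for $\psi\triangleright f$ to be an admissible competitor you should note it is again real-valued; this holds because $\rho$ is a $*$-homomorphism and the state $\psi$ is hermitian, so $\rho(f)$ self-adjoint implies $(\id\otimes\psi)\rho(f)$ self-adjoint.
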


\section{Orbits}\label{se.orbits}

We gather together the various notions of orbit for an action of a quantum group on a compact space and show in this section that they all coincide for isometric actions. 

Let us first review the three competing notions of orbit introduced in \cite[$\S$4.2]{Hua12}. The notations from above are still in place, with $A$ being a compact quantum group acting faithfully on $X$ via $\rho:\cC(X)\to \cC(X)\otimes A$ and $h\in A^*$ the Haar state on $A$. Recall also that we are denoting the antipode of $A$ by $\kappa$. 

Classically, the orbit of a point $x\in X$ under the action of a compact group is the result of ``smearing'' $x$ around via the action. It turns out there are several ways to make sense of this in the half-quantum situation when $A$ is no longer (the function algebra of) an ordinary compact group, and in general they are only conjecturally equivalent:

\begin{enumerate}
\renewcommand{\labelenumi}{(\alph{enumi})}
  \item One possibility would be to declare the orbit to be the support of the measure $x\triangleleft h\in\pr(X)$; this measure is denoted by $\mu_x$ in \cite[$\S$4.2]{Hua12}
  \item Another option: Consider the composition
    \begin{equation}\label{eq.comp}
      \begin{tikzpicture}[auto,baseline=(current  bounding  box.center)]
        \node (1) at (0,0) {$\cC(X)$};
        \node (2) at (2,0) {$\cC(X)\otimes A$};
        \node (3) at (4.5,0) {$A$};
        \draw[->] (1) to node {$\scriptstyle \rho$} (2);
        \draw[->] (2) to node {$\scriptstyle x\otimes\id$} (3);
      \end{tikzpicture}
    \end{equation}  
    where $x$ here is though of as the Dirac measure $\delta_x:\cC(X)\to \bC$. The image of the composition is a quotient $C^*$-algebra of $\cC(X)$, so it is the function algebra of a subspace of $X$. Declare this subspace to be the orbit; this is $\cM_x$ from \cite[Definition 4.6]{Hua12}.
  \item Finally, we can define an equivalence relation $\simeq$ on $X$ by 
\[
 x\simeq x'\iff x\triangleleft h=x'\triangleleft h
\]
and define the orbits to be the equivalence classes; the class of $x$ is denoted by $\orb_x$ in loc. cit. 
\end{enumerate}

\cite[Conjecture 4.19]{Hua12} says that all of these coincide for an arbitrary action. We prove this conclusion below under the assumption (in place from now on) that the topology of $X$ is given by a metric $d$ with respect to which $\rho$ is isometric. We can thus refer unambiguously to the orbit of a point under $\rho$.

Denote the closed ball of radius $r\ge 0$ centered at $x\in X$ by $B(x,r)$ and $\varphi\circ\kappa$ by $\ol{\varphi}$ for $\varphi\in A^*$.

\begin{proposition}\label{pr.orbits}
The action $\rho$ is isometric if and only if 
\begin{equation}\label{eq.pr.orbits}
 (x\triangleleft\varphi)(B(y,r)) = \left(y\triangleleft\ol{\varphi}\right)(B(x,r))
\end{equation}
for all $x,y\in X$, $\varphi\in S(A_r)$ and non-negative $r$.  
\end{proposition}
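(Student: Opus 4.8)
The plan is to route both conditions through a single element, $D_y(x):=\rho(d_y)(x)=(\mathrm{ev}_x\otimes\id)\rho(d_y)\in A_r$, and, on the measure side, through the push-forwards of $x\triangleleft\varphi$ and $y\triangleleft\ol{\varphi}$ along the distance functions $d_y$, $d_x$. I would first record some bookkeeping: $D_y(x)$ is self-adjoint, since $d_y$ is real-valued and both $\rho$ and $\mathrm{ev}_x\otimes\id$ are $*$-homomorphisms; $\kappa$ is a unit-preserving $*$-anti-automorphism with $\kappa^2=\id$, so $\ol{\varphi}=\varphi\circ\kappa\in S(A_r)$ whenever $\varphi\in S(A_r)$, whence $y\triangleleft\ol{\varphi}$ is again a probability measure. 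By \Cref{def.D_isometric}, $\rho$ is isometric exactly when $D_y(x)=\kappa(D_x(y))$ for all $x,y$. On the other hand, writing $\alpha:=(d_y)_*(x\triangleleft\varphi)$ and $\beta:=(d_x)_*(y\triangleleft\ol{\varphi})$ for the push-forward probability measures on the compact interval $[0,\mathrm{diam}(X)]$, the ball identity \Cref{eq.pr.orbits} says precisely that $\alpha([0,r])=\beta([0,r])$ for all $r\ge 0$, i.e. that $\alpha$ and $\beta$ have the same cumulative distribution function, equivalently $\alpha=\beta$.

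The crux is to express all the moments of $\alpha$ and $\beta$ in terms of $D_y(x)$ and $D_x(y)$. Here I would use that $\rho$ is an algebra homomorphism, so $\rho(d_y^n)=\rho(d_y)^n$, and that $\mathrm{ev}_x\otimes\id$ is an algebra homomorphism, so $(\mathrm{ev}_x\otimes\id)\rho(d_y)^n=D_y(x)^n$; unwinding the definition of $\triangleleft$ this gives
\[
\int t^n\,\mathrm{d}\alpha=(x\triangleleft\varphi)(d_y^n)=\varphi\big(D_y(x)^n\big),\qquad n\ge 0,
\]
and symmetrically $\int t^n\,\mathrm{d}\beta=\ol{\varphi}\big(D_x(y)^n\big)=\varphi\big(\kappa(D_x(y))^n\big)$, where the last equality uses that $\kappa$, being anti-multiplicative, satisfies $\kappa(a^n)=\kappa(a)^n$ on powers of a single element. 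This is the step I expect to carry the real weight: the lone distance identity of \Cref{def.D_isometric} gets upgraded to a statement about the entire distribution of $d_y$ purely because every map in sight ($\rho$, evaluation at $x$, and $\kappa$ restricted to powers of one element) respects the relevant products.

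With these formulas both implications are short. For the forward direction, if $\rho$ is isometric then $D_y(x)=\kappa(D_x(y))$, so $\alpha$ and $\beta$ have identical moments; two probability measures on a compact interval with the same moments coincide (polynomials are uniformly dense in the continuous functions by Stone--Weierstrass, and a regular measure is determined by its integrals against continuous functions), hence $\alpha=\beta$ and \Cref{eq.pr.orbits} follows. For the converse, assuming \Cref{eq.pr.orbits} we get $\alpha=\beta$ and in particular equal first moments, i.e. $\varphi(D_y(x))=\varphi(\kappa(D_x(y)))$ for every $\varphi\in S(A_r)$; since the states of a $C^*$-algebra separate its points, $D_y(x)=\kappa(D_x(y))$ for all $x,y$, which is the isometry condition. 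The only delicate point to keep honest is positivity: one must confirm that $\ol{\varphi}$ is a state (so $\beta$ is a genuine probability measure and the moment-determinacy argument applies), which is exactly the bookkeeping remark above. It is worth noting that, somewhat pleasantly, this argument is self-contained and uses neither \Cref{th.old} nor \Cref{cor.old}.
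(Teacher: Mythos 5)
Your proof is correct, and it takes a genuinely different route from the paper's. The paper deduces \Cref{pr.orbits} from \cite[Lemma 3.4]{Chi15}: after passing to the reduced quantum group, the map $f\mapsto (\delta_x\otimes\id)\rho(f)$ is lifted to the enveloping von Neumann algebras $\cW(X)\to\ol{A}$, so that the characteristic function of each closed ball acquires an image $a_{x;B(y,r)}$; the cited lemma says that $\rho$ is isometric precisely when $a_{x;B(y,r)}=\kappa\left(a_{y;B(x,r)}\right)$ for all $x,y,r$, and \Cref{eq.pr.orbits} is exactly this identity paired against states (with the fact that states separate points of the bidual supplying the converse direction). You avoid both the external lemma and the von Neumann machinery: multiplicativity of $\rho$ and of evaluation at $x$, together with $\kappa(a^n)=\kappa(a)^n$, identifies every moment of the push-forwards $(d_y)_*(x\triangleleft\varphi)$ and $(d_x)_*\left(y\triangleleft\ol{\varphi}\right)$ with $\varphi$ evaluated on powers of $\rho(d_y)(x)$ and of $\kappa\left(\rho(d_x)(y)\right)$ respectively, so that the single identity of \Cref{def.D_isometric} upgrades to equality of the two push-forward measures (moment determinacy on a compact interval, via Stone--Weierstrass), while conversely equality of first moments for all states recovers \Cref{def.D_isometric} by state separation. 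The trade-off: the paper's argument is a short reduction to an existing result whose von Neumann--algebraic formulation is built to handle Borel data such as balls directly, whereas yours is self-contained and elementary, and it isolates the conceptual reason the ``first-moment'' identity defining isometry controls the entire distribution of distances, namely that every map involved is (anti)multiplicative. Your bookkeeping points are also sound and necessary: in the reduced, Kac-type setting the antipode is an involutive $*$-anti-automorphism, so $\ol{\varphi}$ is genuinely a state, and the CDF/$\pi$-system argument correctly converts the ball identity \Cref{eq.pr.orbits} into equality of push-forward measures; both of these are points the paper's own proof passes over silently.
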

\begin{proof}
The result is essentially contained in \cite{Chi15}, but we need to unpack the language used there. Let us assume throughout the proof that $A$ is reduced (see \Cref{rem.red}). 

Because it is sometimes more convenient to work with Borel functions (such as characteristic functions of balls in a metric space), lift the algebra morphism \Cref{eq.comp} to a map $\cW(X)\to \ol{A}$, where $\cW(X)$ is the enveloping von Neumann algebra of $\cC(X)$ (\cite[Section III.2]{Tak02}) and $\ol{A}$ is the enveloping von Neumann algebra of $A$. For every Borel set $S\subseteq X$, denote the image of the characteristic function $\chi_S$ through this map by $a_{x;S}$. 

With this in place, \cite[Lemma 3.4]{Chi15} says that the action $\rho$ is isometric (the term there is `(D)-isometric') if and only if
\[
 a_{x;B(y,r)} = \kappa\left(a_{y;B(x,r)}\right),\ \forall x,y\in X,\ \forall r\ge 0.
\] 
Now simply evaluate $\varphi\in S(A)$ against this equality; the result is exactly \Cref{eq.pr.orbits}.
\end{proof}

\begin{corollary}\label{cor.orbits}
If $\rho$ is isometric and $\varphi\in S(A_r)$ and $y=x\triangleleft\varphi$ is a point, then $y\triangleleft\ol{\varphi}=x$. 
\end{corollary}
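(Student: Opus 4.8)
The plan is to read off the claim directly from \Cref{pr.orbits}, using crucially that $y=x\triangleleft\varphi$ is assumed to be an honest point $\delta_y$ rather than a diffuse measure.

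First I would apply \Cref{pr.orbits} to the two points $x$ and $y=x\triangleleft\varphi$ together with the state $\varphi$, obtaining
\[
 (x\triangleleft\varphi)(B(y,r)) = (y\triangleleft\ol{\varphi})(B(x,r)),\quad\forall r\ge 0.
\]
Since $x\triangleleft\varphi=\delta_y$ and every closed ball contains its own center, the left-hand side is $\delta_y(B(y,r))=1$ for each $r\ge 0$, so that $(y\triangleleft\ol{\varphi})(B(x,r))=1$ for all $r\ge 0$. Specializing to $r=0$ and using that $B(x,0)=\{x\}$ (here is where I use that $d$ is a genuine metric, not merely a pseudometric) gives $(y\triangleleft\ol{\varphi})(\{x\})=1$.

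To finish I would note that $\ol{\varphi}=\varphi\circ\kappa$ is again a state, since the antipode $\kappa$ is an involutive, $*$-preserving anti-automorphism and hence positivity-preserving; thus $y\triangleleft\ol{\varphi}$ is a genuine probability measure. A probability measure assigning full mass to a single point must be the corresponding Dirac measure, so $y\triangleleft\ol{\varphi}=\delta_x=x$, as desired. There is no serious obstacle here: essentially all of the content is already absorbed into \Cref{pr.orbits}, and the only points requiring a moment's care are the identification $B(x,0)=\{x\}$ and the observation that $\ol{\varphi}$ is a state, so that the passage from ``full mass at a point'' to ``Dirac measure'' is legitimate.
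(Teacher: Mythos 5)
Your proposal is correct and follows essentially the same route as the paper: apply \Cref{pr.orbits} to $x$ and $y$ at radius $r=0$, use $x\triangleleft\varphi=\delta_y$ to get $\left(y\triangleleft\ol{\varphi}\right)(\{x\})=1$, and conclude $y\triangleleft\ol{\varphi}=\delta_x$. The extra remarks you make (that $\ol{\varphi}$ is again a state because $\kappa$ is a $*$-preserving involutive anti-automorphism, and that full mass at a point forces a Dirac measure) are exactly the details the paper leaves implicit.
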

\begin{proof}
Applying \Cref{pr.orbits} to the points $x$ and $y$ with $r=0$ will result in $\left(y\triangleleft\ol{\varphi}\right)(x)=1$, meaning that the probability measure $y\triangleleft\ol{\varphi}$ is supported at $x$.  
\end{proof}

\begin{corollary}\label{cor.orbits2}
For isometric actions the definitions (a), (b) and (c) from above coincide. Moreover, all are equal to the set 
\begin{equation*}\label{eq.cor.orbits2}
 \cM'_x=\{y\in X\ |\ \exists \varphi\in S(A_r) \text{ such that } x\triangleleft \varphi=y\}.
\end{equation*}
as well as 
\begin{equation*}\label{eq.cor.orbits2_bis}
 \cM''_x=\{y\in X\ |\ \exists \varphi\in S(A) \text{ such that } x\triangleleft \varphi=y\}.
\end{equation*}
\end{corollary}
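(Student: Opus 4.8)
The plan is to establish the cyclic chain of inclusions
\[
 \mathrm{supp}(\mu_x)=\cM'_x\subseteq\cM''_x\subseteq\orb_x\subseteq\mathrm{supp}(\mu_x),
\]
which forces the four sets $\mathrm{supp}(\mu_x)$ (definition (a)), $\cM'_x$, $\cM''_x$ and $\orb_x$ (definition (c)) to coincide, and to identify the subspace $\cM_x$ of definition (b) with $\cM''_x$ along the way. The single tool throughout is the $C^*$-morphism $\Phi_x=(x\otimes\id)\circ\rho$ of \Cref{eq.comp} together with its reduced companion $\Phi^r_x=\tau\circ\Phi_x\colon\cC(X)\to A_r$, where $\tau\colon A\to A_r$ is the reduction map of \Cref{rem.red0}. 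Being $*$-homomorphic images of the commutative algebra $\cC(X)$, both $\mathrm{im}\,\Phi_x$ and $\mathrm{im}\,\Phi^r_x$ are commutative $C^*$-subalgebras with closed range, so $\mathrm{im}\,\Phi_x\cong\cC(\cM_x)$ and $\mathrm{im}\,\Phi^r_x\cong\cC(Z)$ for closed subspaces $\cM_x,Z\subseteq X$.

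First I would dispatch the easy links. Since $\Phi_x$ is exactly the quotient $\cC(X)\twoheadrightarrow\cC(\cM_x)$, a point $y$ lies in $\cM_x$ if and only if $\delta_y$ factors through $\Phi_x$; and any character of $\cC(\cM_x)$ extends, by state extension, to some $\psi\in S(A)$ with $x\triangleleft\psi=\psi\circ\Phi_x=\delta_y$. Hence $\cM_x=\cM''_x$, settling (b). The same reasoning applied to $\Phi^r_x$ and $S(A_r)$ identifies $\cM'_x$ with $Z$, and since $\Phi^r_x=\tau\circ\Phi_x$ factors $\Phi_x$ we get $\cM'_x=Z\subseteq\cM_x=\cM''_x$. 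The inclusion $\cM''_x\subseteq\orb_x$ is pure Haar invariance: if $y=x\triangleleft\psi$ then $\mu_y=y\triangleleft h=x\triangleleft(\psi h)=x\triangleleft h=\mu_x$ by \Cref{eq.haar}, so $y\simeq x$. Finally, to see $\mathrm{supp}(\mu_x)=\cM'_x$, write $h=h_r\circ\tau$ with $h_r$ the faithful Haar state of $A_r$ (\Cref{rem.red0}) and observe $\mu_x(f)=h(\Phi_x(f))=h_r(\Phi^r_x(f))$, so $\mu_x$ is the pullback along $\Phi^r_x\colon\cC(X)\twoheadrightarrow\cC(Z)$ of the state $h_r|_{\mathrm{im}\,\Phi^r_x}$; as $h_r$ is faithful on $A_r$ it stays faithful on the subalgebra $\mathrm{im}\,\Phi^r_x$, so this state has full support and $\mathrm{supp}(\mu_x)=Z=\cM'_x$.

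The only link left is $\orb_x\subseteq\mathrm{supp}(\mu_x)$, which, since $y\simeq x$ gives $\mu_y=\mu_x$, reduces to the \emph{reflexivity} statement $y\in\mathrm{supp}(\mu_y)$ for every $y$. This I expect to be the crux. For the full algebra it would be free: the counit $\epsilon\in S(A)$ satisfies $y\triangleleft\epsilon=y$, so $y\in\cM''_y$; but $\epsilon$ need not descend to $A_r$ when the quantum group fails to be coamenable, so $y\in\cM'_y=\mathrm{supp}(\mu_y)$ is not automatic from the counit. This is precisely the point at which the isometry hypothesis must be used. Specializing \Cref{pr.orbits} to $\varphi=h$ and invoking $\overline h=h\circ\kappa=h$ (antipode-invariance of the Haar state) produces the symmetry
\[
 \mu_y(B(z,r))=\mu_z(B(y,r)),\qquad\forall\,y,z\in X,\ r\ge 0.
\]
Picking any $z\in\mathrm{supp}(\mu_y)$, the already-proved equality $\mathrm{supp}(\mu_y)=\cM'_y$ and the inclusion $\cM'_y\subseteq\orb_y$ give $\mu_z=\mu_y$; on the other hand $\mu_y(B(z,r))>0$ for all $r>0$ forces, through the displayed symmetry, $\mu_z(B(y,r))>0$ for all $r>0$ (comparing closed and open balls of nearby radii), i.e. $y\in\mathrm{supp}(\mu_z)=\mathrm{supp}(\mu_y)$. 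This establishes reflexivity, closes the cycle, and shows all five sets agree.
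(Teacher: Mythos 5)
Your proof is correct, but it closes the argument by a genuinely different route than the paper's. The identification $\cM_x=\cM''_x$ via extending characters of the image of \Cref{eq.comp} to states of $A$ is the same in both arguments; the difference lies in everything else. The paper quotes \cite[Theorem 4.18]{Hua12} for $\mathrm{supp}\,\mu_x\subseteq\cM_x\subseteq\orb_x$ (with equality of the first two in the reduced case), proves that the sets $\cM'_x$ partition $X$ --- reflexivity and symmetry of the underlying relation being where isometry enters, via \Cref{cor.orbits}, i.e.\ \Cref{pr.orbits} at $r=0$ --- and then finishes by a partition-counting step: each $\orb_x$ is a disjoint union of sets $\cM_y$ all containing $\mathrm{supp}\,\mu_x$, so it must consist of a single one. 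You instead reprove Huang's inclusions from scratch ($\cM''_x\subseteq\orb_x$ by Haar invariance, $\mathrm{supp}\,\mu_x=\cM'_x$ from faithfulness of $h_r$ together with $h=h_r\circ\tau$), and your single use of isometry is \Cref{pr.orbits} applied to the Haar state itself, yielding the symmetry $\mu_y(B(z,r))=\mu_z(B(y,r))$ and hence the reflexivity $y\in\mathrm{supp}\,\mu_y$ that closes your cyclic chain. What this buys: a self-contained proof independent of Huang's theorem, a cleaner reduced-vs-full bookkeeping (the paper's Steps 3--4 collapse into the identity $h=h_r\circ\tau$), and a sharp localization of where the isometry hypothesis is indispensable. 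The price is one external ingredient the paper never states: the antipode-invariance $h\circ\kappa=h$ of the Haar state. This is a standard fact and does hold here --- $A_r$ is of Kac type since it acts faithfully on a classical compact space, so $\kappa$ is a bounded involutive $*$-antiautomorphism, $h\circ\kappa$ is then a state, and two-sided invariance of $h$ plus uniqueness of the Haar state force $h\circ\kappa=h$ --- but it deserves an explicit justification rather than a bare invocation. One small nitpick: your aside that reflexivity ``would be free'' via the counit $\epsilon\in S(A)$ is accurate only when $A$ is the universal form; for an arbitrary $A$ the counit need not be norm-bounded on $A$ at all, not merely fail to descend to $A_r$. Since that remark is purely motivational, it does not affect the proof.
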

\begin{proof}
According to \cite[Theorem 4.18]{Hua12} we have $\mathrm{supp}~\mu_x\subseteq \cM_x\subseteq \orb_x$ and moreover $\mathrm{supp}~\mu_x=\cM_x$ if $A$ is reduced. 

{\bf Step 1: The sets $\cM'_x$ form a partition of $X$.} Define a relation $\sim$ on $X$ by
\[
 z\sim z' \iff \exists \varphi\in S(A_r) \text{ such that } z\triangleleft \varphi =z'.
\] 
It is transitive because $\triangleleft$ is an action of the semigroup $S(A_r)$, and it is symmetric and reflexive by \Cref{pr.orbits}. Symmetry is immediate; to verify reflexivity, note first that there is always a state $\varphi$ such that $x\triangleleft \varphi$ is a point. Indeed, simply select a $\varphi$ that restricts to a pure state on the image of \Cref{eq.comp}. But now \Cref{cor.orbits} says that $x\triangleleft\left(\varphi\ol{\varphi}\right)=x$, hence the reflexivity claim. 

We now know that $\sim$ is an equivalence relation, and the sets of the form \Cref{eq.cor.orbits2} are its classes. This concludes the proof of Step 1. 

{\bf Step 2: $\cM_x = \cM''_x$.} This is almost tautological: $\cM_x$ is by definition the spectrum of the image of \Cref{eq.comp}, i.e. the set of pure states of this image. These are exactly the multiplicative states $y\in X$  on $\cC(X)$ that factor through \Cref{eq.comp}, i.e. of the form $\varphi\circ \Cref{eq.comp}$ for some state $\varphi$ on $A$. Finally, $y=\varphi\circ\Cref{eq.comp}$ precisely means $y=x\triangleleft\varphi$. This finishes Step 2.  

{\bf Step 3: The reduced case.} Now consider the reduced version $\rho_r$ of the action (\Cref{rem.red}). The sets $\cM'_x$ and $\cM''_x$ for it coincide, so Steps 1 and 2 ensure that the sets $\cM_x$ (for $\rho_r$) partition $X$. But then, since $\orb_x$ also partition $X$ by definition, each $\orb_x$ comprises several $\cM_y$. Since the latter however contains the support of $\mu_y=\mu_x$, $\orb_x$ cannot possibly consist of several disjoint $\cM_y$ and so $\mathrm{supp}~\mu_x=\cM_x=\orb_x$ when $A$ is reduced.

{\bf Step 4: The general case.} Since $\mathrm{supp}~\mu_x$ and $\orb_x$ do not change when passing to the reduced version of the action and $\cM_x$ is always trapped between them, Step 3 shows that $\mathrm{supp}~\mu_x=\cM_x=\orb_x$ for arbitrary isometric actions. 

This set is also equal to $\cM''_x$  by Step 2, and similarly it is equal to $\cM'_x$ because passage to the reduced action $\rho_r$ identifies $\cM'_x$ with $\cM''_x$ and does not change $\orb_x$.
\end{proof}

\section{A criterion for an action to be classical}\label{se.criterion}

This is a short section and a bit of a detour, in that it is not concerned with metric structures. The title means the following:

\begin{definition}\label{def.classical}
An action $\rho$ of a quantum group $A$ on a compact space $X$ is \define{classical} if it can be realized as a composition
\begin{equation*}\label{eq.classical}
      \begin{tikzpicture}[auto,baseline=(current  bounding  box.center)]
        \node (1) at (0,0) {$\cC(X)$};
        \node (2) at (3,.5) {$\cC(X)\otimes \cC(G)$};
        \node (3) at (6,0) {$\cC(X)\otimes A$};
        \draw[->] (1) to[bend left=10] node[pos=.5,auto] {$\scriptstyle \rho'$} (2);
        \draw[->] (2) to[bend left=10] node[pos=.5,auto] {$\scriptstyle \id\otimes \tau$} (3);
        \draw[->] (1) to[bend right=20] node[pos=.5,auto,swap] {$\scriptstyle \rho$}(3);
      \end{tikzpicture}
    \end{equation*}
for an action $\rho'$ of an ordinary compact group on $X$ and a morphism $\tau:\cC(G)\to A$ of compact quantum groups.   
\end{definition}

\begin{remark}\label{rem.classical}
If $\rho$ is faithful (\Cref{def.faithful}), then classicality is equivalent to $A$ being commutative. This will be the case we are most interested in.

Moreover, $\rho$ is classical if and only if the reduced version $\rho_r$ from \Cref{rem.red} is. Indeed, the surjection $A\to A_r$ is one-to-one on the dense Hopf $*$-subalgebra of $A$ consisting of ``representative functions'' on the quantum group (e.g. \cite[3.1.7]{KusTus99}). 
\end{remark}

Now consider an action $\rho$ as in \Cref{def.classical}. We will need the following notion.

\begin{definition}\label{def.pivot}
A point $x\in X$ is a \define{pivot} for $\rho$ if for every point $y\in X$ and every state $\varphi\in S(A)$ such that $x\triangleleft \varphi$ is a point the probability measure $y\triangleleft\varphi$ only depends on $x\triangleleft \varphi$. 
\end{definition}

The main result of this section says the following:

\begin{proposition}\label{pr.criterion}
An action admitting a pivot is classical. 
\end{proposition}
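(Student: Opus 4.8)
The plan is to show that, after passing to the reduced version of the action (harmless by \Cref{rem.classical}), the algebra $A$ is commutative; by the same remark this is exactly classicality for the faithful $\rho$. So I work with $A$ reduced, hence of Kac type with antipode $\kappa$ and faithful Haar state $h$. For a point $y\in X$ write $\alpha_y:=(\delta_y\otimes\id)\circ\rho:\cC(X)\to A$, the $*$-homomorphism of \Cref{eq.comp}, so that $\varphi(\alpha_y(f))=(y\triangleleft\varphi)(f)$ for all $\varphi\in S(A)$ and $f\in\cC(X)$. Since $(\mu\otimes\id)(\rho(f))=\int_X\alpha_y(f)\,\mathrm{d}\mu(y)$ lies in the closed linear span of $\{\alpha_y(f)\}$, faithfulness (\Cref{def.faithful}) forces $A$ to be generated as a $C^*$-algebra by the elements $\alpha_y(f)$, $y\in X$, $f\in\cC(X)$. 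Let $B:=\alpha_x(\cC(X))$ be the commutative $C^*$-subalgebra attached to the pivot $x$; its spectrum is the orbit $\cM_x$ of $x$ (definition (b) in \Cref{se.orbits}). The goal I would aim for is the sharp statement $A=B$: once every generator $\alpha_y(f)$ is shown to lie in $B$, the algebra $A$ coincides with the commutative algebra $B$ and we are done. In the classical model this is precisely the assertion that the orbit map of a free point is injective, so that $\cC(X)\twoheadrightarrow\cC(\cM_x)=\cC(G)$.

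Next I would rephrase the pivot condition \Cref{def.pivot} spectrally. A state $\varphi\in S(A)$ satisfies ``$x\triangleleft\varphi$ is a point $z\in\cM_x$'' precisely when the restriction $\varphi|_B$ is the character $\mathrm{ev}_z$ of $B\cong\cC(\cM_x)$; write $P_z$ for the (weak$^*$-compact, convex) set of such state extensions and $P=\bigcup_{z}P_z$. In this language the pivot hypothesis says exactly that, for each fixed $y$ and $f$, the number $\varphi(\alpha_y(f))$ is the same for every $\varphi\in P_z$, depending only on $z$. This produces a well-defined function $z\mapsto c_{y,f}(z)$ on $\cM_x$, which is continuous (if $z_n\to z$, any cluster point $\varphi$ of a choice $\varphi_n\in P_{z_n}$ lies in $P_z$, so $c_{y,f}(z_n)=\varphi_n(\alpha_y(f))\to\varphi(\alpha_y(f))=c_{y,f}(z)$) and hence defines an element $b_{y,f}\in B$ with $\varphi(\alpha_y(f))=\varphi(b_{y,f})$ for all $\varphi\in P$. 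The whole problem is thereby reduced to a single implication: agreement of $\alpha_y(f)$ and $b_{y,f}$ on the point-like states $P$ should force $\alpha_y(f)=b_{y,f}$ in $A$.

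The main obstacle is exactly this last upgrade. The states in $P$ only detect the diagonal characters $\mathrm{ev}_z$ of $B$ and need not separate elements of $A$; in particular a pure state of $A$ restricts to a character of $B$ only when it already lies in $P$, so the usual fact that pure states are norming does not apply directly to $\alpha_y(f)-b_{y,f}$. To close the gap I would use the Hopf structure to propagate the information carried by $P$ across all of $A$. The concrete handle is coassociativity of the coaction, $(\rho\otimes\id)\rho=(\id\otimes\Delta)\rho$, which gives $\Delta(\alpha_y(f))=\sum\alpha_y(f_{(0)})\otimes f_{(1)}$ for $\rho(f)=\sum f_{(0)}\otimes f_{(1)}$; pairing a fixed $\varphi\in P$ on the first leg against a varying $\eta\in S(A)$ on the second yields $(\varphi\eta)(\alpha_y(f))=\bigl((y\triangleleft\varphi)\triangleleft\eta\bigr)(f)$, which would identify $\varphi\eta$ on $\alpha_y(f)$ with $\eta$ on some $\alpha_{y'}(f)$ and thus spread the equality to a separating family of states. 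The sticking point, and the genuine content of \Cref{pr.criterion}, is that this requires $y\triangleleft\varphi$ to again be a \emph{point} for $\varphi\in P$ --- i.e.\ that being point-like at the pivot is inherited everywhere; establishing this (or, alternatively, manufacturing a conditional expectation $E\colon A\to B$ from $h$ and $\kappa$ with $E(\alpha_y(f))=b_{y,f}$ and using the pivot to force $\alpha_y(f)$ into $\mathrm{im}\,E$) is where the real work lies. Granting it, commutativity of $A$, and hence classicality of $\rho$, follow at once from \Cref{rem.classical}.
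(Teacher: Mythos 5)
Your setup matches the paper's: reduce to showing $A$ is commutative, note that faithfulness makes $A$ generated by the subalgebras $A_y=\alpha_y(\cC(X))$, and recast the pivot condition as saying that a state's values on $A_y$ are determined by its restriction to $B=A_x$ whenever that restriction is a character. But the proof stops exactly where the real argument has to happen, and you say so yourself: you cannot upgrade ``$\varphi(\alpha_y(f))=\varphi(b_{y,f})$ for all point-like $\varphi$'' to the identity $\alpha_y(f)=b_{y,f}$ in $A$, and neither of your proposed rescue routes (propagating via coassociativity, or manufacturing a conditional expectation $E:A\to B$) is carried out. In particular, the coassociativity route needs $y\triangleleft\varphi$ to be a point for point-like $\varphi$, which is not available at this stage --- and, notably, the paper never needs it.

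The missing idea is to run the argument contrapositively through an order-theoretic Hahn--Banach extension theorem, rather than trying to prove equality from uniqueness. The paper's proof shows directly that $A_y\subseteq A_x$ for every $y$: if not, pick a positive $a\in A_y\setminus A_x$, set $L_a=\{b\in A_x \text{ positive}:b\le a\}$ and $U_a=\{c\in A_x\text{ positive}:a\le c\}$, and observe that $\{c-b: c\in U_a,\ b\in L_a\}$ is norm-bounded away from $0$ (else $a$ would be a limit of elements of $A_x$). Viewing $A_x$ as $\cC(\cM_x)$, the sets $\{z\in\cM_x:(c-b)(z)\ge s\}$ are nonempty, compact, and closed under finite intersections (here one uses that $L_a$ is closed under suprema and $U_a$ under infima), so there is a \emph{character} $\psi$ of $A_x$, i.e.\ a point of $\cM_x$, with $\sup_{b\in L_a}\psi(b)<\inf_{c\in U_a}\psi(c)$. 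By the extension result \cite[Lemma 2.13]{PauTom09}, for every $t$ in this gap $\psi$ extends to a state of $A$ taking the value $t$ at $a$; two different choices of $t$ give two states agreeing multiplicatively on $A_x$ (so $x\triangleleft\varphi$ is the same point) but disagreeing on $A_y$, contradicting the pivot hypothesis. With all $A_y\subseteq A_x$, faithfulness gives $A=A_x$, which is commutative. So your reduction is sound, but the decisive step --- converting non-membership $a\notin A_x$ into non-unique state extensions via the gap between $L_a$ and $U_a$ --- is absent, and without it the proof is incomplete.
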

\begin{proof}
We assume $\rho$ is faithful and try to prove that $A$ is commutative (see \Cref{rem.faithful,rem.classical}). 

Recall the notation $\cM_x$ from the previous section: It was the spectrum of the image of the map \Cref{eq.comp}, and one candidate for the orbit of $x$ under $\rho$. Denote the commutative $C^*$-subalgebra $\cC(\cM_y)\le A$ by $A_y$ for $y\in X$. The pivot hypothesis ensures that the restriction of a state $\varphi$ to $A_y$ is determined by its restriction to $A_x$ so long as this restriction is multiplicative; indeed, this latter multiplicativity requirement is equivalent to $x\triangleleft\varphi$ being a point.  

{\bf Claim: All $A_y$, $y\in X$ are contained in $A_x$.} Suppose $A_y\not\subseteq A_x$ for some $y\in X$ and let $a\in A_y\setminus A_x$ be a positive element.

Define
\[
 L_a = \{\text{ positive }b\in A_x\ |\ b\le a\}
\]
and
\[
 U_a = \{\text{ positive }c\in A_x\ |\ a\le c\}
\]
and let $\psi\in S(A_x)$ be a multiplicative state on $A_x$ for which 
\begin{equation}\label{eq.criterion}
 \sup_{b\in L_a}\psi(b) < \inf_{c\in L_a}\psi(c). 
\end{equation}
To see that such a state exists, note first that the set 
\begin{equation}\label{eq.c-b}
 \{c-b\ |\ c\in U_a,\ b\in L_a\}
\end{equation}
is bounded away from zero in norm (otherwise we could find elements $b\in L_a$ arbitrarily close to $a$ and hence $a$ would belong to $A_x$). Now, if $s>0$ is a lower bound for the norms $\|c-b\|$, then regarding the elements of $A_x$ as continuous functions on $\cM_x$, the sets 
\begin{equation}\label{eq.(c-b)(z)}
 \{z\in \cM_x\ |\ (c-b)(z)\ge s\}
\end{equation}
are all non-empty. 

Note furthermore that $L_a$ is closed under taking suprema and $U_a$ is closed under taking infima with respect to the standard partial order on $A$ induced by the positive elements, and hence \Cref{eq.c-b} is closed under infima. This means that the collection of sets \Cref{eq.(c-b)(z)} is closed under intersections; since all such sets are compact and non-empty, their intersection is also non-empty. Finally, simply take $\psi\in S(A_x)=\pr(\cM_x)$ to be one of the points in this intersection.   

According to \cite[Lemma 2.13]{PauTom09}, for any $t\in\bR$ between the two numbers in \Cref{eq.criterion} $\psi$ can be extended to a positive functional $\varphi$ on all of $A$ such that $\phi(a)=t$. Since $\phi$ is also unital, it must be a state. The range of choices for $t$ means that the restriction to $A_y$ of such an extension is not uniquely determined by the multiplicative state $\psi$, contradicting the pivot condition. This concludes the proof of the claim. 

We now know that the $C^*$-subalgebra of $A$ generated by $A_y$, $y\in X$ is commutative. The faithfulness condition ensures that this $C^*$-algebra is $A$ itself, finishing the proof. 
\end{proof}

\section{Actions with finite orbits are classical}\label{se.fin_orb=>classical}

We now return to the metric setup. Recall that in the setting of isometric actions we can refer unambiguously to orbits in any of their various guises (\Cref{cor.orbits2}). The main result of the section is

\begin{theorem}\label{th.fin_orb=>classical}
An isometric action  with finite orbits on the underlying geodesic metric space of a compact, connected Riemannian manifold is classical. 
\end{theorem}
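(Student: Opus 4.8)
The plan is to produce a \emph{pivot} in the sense of \Cref{def.pivot} and invoke \Cref{pr.criterion}. As usual I assume $\rho$ faithful and reduced, so that the antipode $\kappa$ exists and is involutive ($A$ is of Kac type). Write $O_z$ for the (finite) orbit of $z$, which by \Cref{cor.orbits2} equals $\cM_z=\orb_z$ and contains the support of $z\triangleleft\varphi$ for every state $\varphi$. The single geometric input I need is a support constraint extracted from \Cref{th.old}: if $x\triangleleft\varphi$ is a point $x'$, then the only element of $\Pi(x\triangleleft\varphi,y\triangleleft\varphi)=\Pi(\delta_{x'},y\triangleleft\varphi)$ is the product $\delta_{x'}\otimes(y\triangleleft\varphi)$, and its being supported on $\{d=d(x,y)\}$ forces
\[
 \mathrm{supp}(y\triangleleft\varphi)\subseteq O_y\cap\{w:d(x',w)=d(x,y)\}.
\]

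I would then single out the points $x$ for which $d(x,\cdot)$ is injective on every orbit, and claim any such $x$ is a pivot. First a transfer step: if $d(x,\cdot)$ is injective on every orbit then so is $d(x',\cdot)$ for every $x'\in O_x$. Indeed, pick $\varphi_0$ with $x\triangleleft\varphi_0=x'$; by \Cref{cor.orbits}, $x'\triangleleft\ol{\varphi_0}=x$, and $\overline{\ol{\varphi_0}}=\varphi_0$ since $\kappa^2=\id$. If $d(x',\cdot)$ failed to be injective on some orbit, say $d(x',z_1)=d(x',z_2)=r$ with $z_1\neq z_2$, then the support constraint applied to $\ol{\varphi_0}$ (which sends $x'$ to the point $x$) puts both $z_i\triangleleft\ol{\varphi_0}$ on $O_{z_1}\cap\{w:d(x,w)=r\}$, a single point $w_0$ by injectivity of $d(x,\cdot)$; applying \Cref{cor.orbits} once more gives $z_i=w_0\triangleleft\varphi_0$ for $i=1,2$, i.e.\ $z_1=z_2$, a contradiction. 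Granting this, the pivot property is immediate: if $x\triangleleft\varphi=x'$ is a point, then $d(x',\cdot)$ is injective on $O_y$, so the support constraint pins $y\triangleleft\varphi$ to the unique $w\in O_y$ with $d(x',w)=d(x,y)$ — a point determined by $x\triangleleft\varphi$ alone.

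Everything then reduces to the existence of a point $x$ whose distance function is injective on every orbit, equivalently $x\notin\bigcup\{w:d(w,z_1)=d(w,z_2)\}$ as $(z_1,z_2)$ ranges over distinct points lying in a common orbit. This I expect to be the main obstacle. Each individual bisector $\{w:d(w,z_1)=d(w,z_2)\}$ is closed and nowhere dense (indeed of Riemannian measure zero), the only place Riemannian geometry enters; the difficulty is that the union runs over uncountably many orbits, so neither a bare measure nor a bare Baire argument applies directly. To control it I would first establish that the same-orbit relation is closed and the orbit cardinalities are uniformly bounded — using that $z\mapsto z\triangleleft h$ is continuous and $1$-Lipschitz for the Kantorovi\u c metric by \Cref{cor.old} — so that on a dense open set the orbit is carried by finitely many continuous local sections $s_1=\id,\dots,s_N$. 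On such a chart the bad set becomes the finite union of the zero sets of the continuous functions $x\mapsto d(x,s_i(x))-d(x,s_j(x))$, and a Baire argument reduces the whole problem to excluding the possibility that one of these vanishes on an open set, i.e.\ that two genuinely distinct, continuously varying orbit points remain equidistant from the moving base point over an open region. Ruling out this last coincidence — presumably by differentiating and playing the first-variation/geodesic structure against the orbit relation — is the delicate point, and is where I would concentrate the real work.
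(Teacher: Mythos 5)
Your high-level strategy (produce a pivot and invoke \Cref{pr.criterion}) is the same as the paper's, and your algebraic steps are sound: the support constraint extracted from \Cref{th.old} is correct, and the transfer step using \Cref{cor.orbits} and $\kappa^2=\id$ works. The fatal problem is the claim you defer to the end: the existence of a point $x$ whose distance function is injective on \emph{every} orbit. This claim is false in general, even for classical actions, so no amount of work on the ``delicate point'' can close the gap. Concretely, let $X=\bR^2/\bZ^2$ be the flat torus and let $\bZ/2$ act by the free isometric involution $\sigma(u,v)=(u+1/2,v)$; this is an isometric action with all orbits of size $2$. For \emph{any} $x\in X$, the function $f(z)=d(x,z)-d(x,\sigma(z))$ is continuous and satisfies $f(\sigma(z))=-f(z)$; connecting a point $z$ with $f(z)\neq 0$ to $\sigma(z)$ by a path and applying the intermediate value theorem produces $z_0$ with $f(z_0)=0$, and freeness gives $z_0\neq\sigma(z_0)$. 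So $d(x,\cdot)$ fails to be injective on the orbit $\{z_0,\sigma(z_0)\}$ --- explicitly, $x=(a,b)$ is equidistant from the two points of the orbit of $(a+1/4,v)$ for every $v$. Thus the ``bad set'' you hope to avoid by a Baire argument is all of $X$: two distinct, continuously varying orbit points really can remain equidistant from the moving base point everywhere, and the same construction (any free isometric involution, e.g.\ a free deck transformation of a hyperbolic surface double cover) works even under the negative-curvature hypothesis of \Cref{th.main}. There is also a secondary slip in your chart reduction: the set you must avoid involves bisectors of pairs $(z_1,z_2)$ ranging over \emph{all} orbits, whereas the functions $x\mapsto d(x,s_i(x))-d(x,s_j(x))$ only see pairs inside the orbit of $x$ itself; the true bad set is a projection of a zero set in the two variables $(x,y)$, not a finite union of zero sets of functions of $x$.

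The paper circumvents exactly this obstruction. Its pivot candidates are \emph{generic} points --- points whose finite orbit has maximal cardinality, which exist by an open-plus-dense Baire argument (\Cref{le.orbits_open}, \Cref{le.orbits_dense}, \Cref{cor.max_card}) --- and instead of demanding that $d(x,\cdot)$ separate points of every orbit globally, it proves the much weaker \Cref{le.key}: any state $\varphi$ with $x\triangleleft\varphi=x$ for a generic $x$ fixes \emph{every} point of $X$. This is shown by propagation along a minimizing geodesic from $x$: the set of fixed points on the geodesic is closed by continuity, and open because, within the injectivity radius, the spheres around two distinct already-fixed points intersect in a single point (with a counting argument using maximality of the orbit cardinality to get the propagation started at $x$). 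The pivot property of generic points is then recovered from \Cref{le.key} by antipode manipulations close in spirit to your transfer step. In short, local sphere-intersection rigidity inside a single normal neighborhood substitutes for the global injectivity your argument requires; that substitution is the missing idea, and without it your reduction cannot be completed.
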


Starting here and throughout the rest of the paper $(X,d)$ is a compact metric space and $\rho$ an isometric action of a quantum group $A$ on it. The proof of \Cref{th.fin_orb=>classical} proceeds through some auxiliary results. 

For two subsets $Z,Z'\subseteq X$ denote 
\[
 d(Z,Z') = \inf_{z\in S,z'\in T}d(z,z').
\]

\begin{lemma}\label{le.dist_between_orbits}
 For any $x,y\in X$ we have 
\[
 d(\orb_x,\orb_y) = d(\orb_x,y).  
\]
\end{lemma}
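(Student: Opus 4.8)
The plan is to deduce the identity from a single cleaner fact: \emph{the function $z\mapsto d(\orb_x,z)$ is constant on every orbit}. Granting this, note first that each orbit is compact, being the support of a measure $\mu_\bullet$ (equivalently the compact subspace $\cM_\bullet\subseteq X$ of \Cref{cor.orbits2}), so all infima below are attained. Writing the defining infimum of $d(\orb_x,\orb_y)$ as an iterated infimum,
\[
 d(\orb_x,\orb_y)=\inf_{z\in\orb_y}\ \inf_{w\in\orb_x}d(w,z)=\inf_{z\in\orb_y}d(\orb_x,z),
\]
the constancy of $z\mapsto d(\orb_x,z)$ on $\orb_y$ collapses the outer infimum to the single value attained at $y\in\orb_y$, namely $d(\orb_x,y)$. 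This is exactly the assertion of the lemma, so everything reduces to the constancy claim.

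To prove constancy, I would take two points $z,z'$ in a common orbit and, invoking the description of orbits in \Cref{cor.orbits2}, assume $z'=z\triangleleft\varphi$ for some state $\varphi\in S(A_r)$. Put $r=d(\orb_x,z')$ and, using compactness of $\orb_x$, choose $w'\in\orb_x$ with $z'\in B(w',r)$. Since $z\triangleleft\varphi$ is the Dirac measure at $z'$, this reads $(z\triangleleft\varphi)(B(w',r))=1$. Now feed this into the ball-symmetry of \Cref{pr.orbits}, applied to the pair of points $z$ and $w'$, to obtain $(w'\triangleleft\ol{\varphi})(B(z,r))=1$. The measure $w'\triangleleft\ol{\varphi}$ is supported on $\orb_{w'}=\orb_x$: indeed $w'\triangleleft\ol{\varphi}$ factors as $\ol{\varphi}$ composed with the quotient $\cC(X)\to\cC(\cM_{w'})$ of \Cref{eq.comp} defining the orbit, hence annihilates every function vanishing on $\cM_{w'}=\orb_x$. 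Therefore $\orb_x$ meets $B(z,r)$, giving a point $w\in\orb_x$ with $d(w,z)\le r$, i.e.\ $d(\orb_x,z)\le r=d(\orb_x,z')$. Running the identical argument with the roles of $z$ and $z'$ interchanged — legitimate because $z=z'\triangleleft\ol{\varphi}$ by \Cref{cor.orbits}, because $\ol{\,\ol{\varphi}\,}=\varphi$ as $\kappa$ is involutive, and because $\ol{\varphi}\in S(A_r)$ — yields the reverse inequality, hence $d(\orb_x,z)=d(\orb_x,z')$.

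The crux, and the step I expect to require the most care, is this transfer: converting the trivial geometric statement ``$z'$ lies in the radius-$r$ ball about a nearest orbit point $w'$'' into the nontrivial statement ``$\orb_x$ meets the radius-$r$ ball about $z$''. This is precisely what the isometry hypothesis purchases through \Cref{pr.orbits}, and it rests on two supporting facts that I would verify explicitly: (i) the symmetry $\ol{\,\ol{\varphi}\,}=\varphi$ together with $\ol{\varphi}\in S(A_r)$, which comes from the Kac-type (involutive, $*$-preserving, antimultiplicative) antipode; and (ii) the support containment $\mathrm{supp}\,(w'\triangleleft\ol{\varphi})\subseteq\orb_{w'}$, together with the closedness of orbits that guarantees the infima are genuinely minima. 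The rest is bookkeeping.
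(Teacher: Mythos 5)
Your proposal is correct, and it shares the paper's skeleton exactly: reduce the lemma to the claim that $z\mapsto d(\orb_x,z)$ is constant on each orbit (using compactness of orbits so infima are attained), prove one inequality between $d(\orb_x,z)$ and $d(\orb_x,z')$, and obtain the reverse by the antipode symmetry $z=z'\triangleleft\ol{\varphi}$ of \Cref{cor.orbits}. Where you differ is the transfer mechanism. The paper takes the nearest point of $\orb_x$ to $y$ (renamed $x$), pushes it forward by the state $\varphi$ carrying $y$ to $y'$, and invokes the coupling theorem (\Cref{th.old}): the coupling of $x\triangleleft\varphi$ with the Dirac mass at $y'$ is concentrated on pairs at distance $d(\orb_x,y)$, so $x\triangleleft\varphi$ is supported on the sphere of that radius about $y'$; since that support lies in $\orb_x$, this gives $d(\orb_x,y')\le d(\orb_x,y)$. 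You instead work backwards from $z'$: the Dirac mass $z\triangleleft\varphi=\delta_{z'}$ charges the ball $B(w',r)$ about its nearest orbit point $w'$, and the ball-symmetry characterization of isometry (\Cref{pr.orbits}) converts this into $\left(w'\triangleleft\ol{\varphi}\right)(B(z,r))=1$, whence $\orb_x$ meets $B(z,r)$ and $d(\orb_x,z)\le d(\orb_x,z')$. Both tools are reformulations of the isometry hypothesis, so the difference is packaging rather than substance, but each version has a small advantage: yours stays within results proved in this paper (\Cref{pr.orbits}) rather than citing the coupling theorem of \cite{Chi15}, and it makes explicit the support containment $\mathrm{supp}\left(w'\triangleleft\ol{\varphi}\right)\subseteq\orb_{w'}$ together with the Kac-type facts $\ol{\ol{\varphi}}=\varphi$, $\ol{\varphi}\in S(A_r)$ — points the paper's phrases ``there are points of $\orb_x$ on that sphere'' and ``$y$ and $y'$ play symmetric roles'' use implicitly; the paper's coupling formulation, on the other hand, yields the sharper conclusion that the image measure lives on an exact sphere, which is the form of the argument reused repeatedly in \Cref{le.orbits_open}, \Cref{le.orbits_dense} and \Cref{le.key}.
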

\begin{proof}
A priori we have
\[
 d(\orb_x,\orb_y) = \inf_{y'\in\orb_y}d(\orb_x,y'), 
\]
so we will be done if we show that all distances $d(\orb_x,y')$ for $y'\in\orb_y$ are equal.  

Fix some arbitrary $y'\in \orb_y$, and assume $d(\orb_x,y)$ is equal to $d(x,y)$ (it is achieved at some $x'\in \orb_x$ by compactness, and we may as well suppose $x'=x$ since $\orb_{x'}=\orb_x$). Let $\varphi\in S(A_r)$ be a state such that $y'=y\triangleleft\varphi$ (\Cref{cor.orbits2}). 

Now, by \Cref{th.old} there is a $(x\triangleleft\varphi,y\triangleleft \varphi)$-coupling supported on the set of pairs of points that are $r=d(x,y) = d(\orb_x,y)$ apart, so $x\triangleleft \varphi$ must be supported on the sphere of radius $r$ around $y'=y\triangleleft\varphi$. In particular, there are points of $\orb_x$ on that sphere and we have 
\begin{equation*}
 d(\orb_x,y')\le d(\orb_x,y).
\end{equation*}
But $y$ and $y'$ play symmetric roles, so this is actually an equality. Since $y'\in\orb_y$ was arbitrary, we are done.  
\end{proof}

\begin{lemma}\label{le.orbits_open}
For every positive integer $n$, the set $X_{\ge n}$ of points whose orbit has at least $n$ elements is open. 
\end{lemma}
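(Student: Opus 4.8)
The plan is to prove the stronger statement that the integer-valued function $x\mapsto |\orb_x|$ is lower semicontinuous; since $X_{\ge n}$ is precisely the locus where this function is at least $n$, lower semicontinuity makes $X_{\ge n}$ open. Concretely I would fix a point $x$ with $|\orb_x|\ge n$, choose $n$ distinct points $x_1,\dots,x_n\in\orb_x$, set $\delta=\min_{i\ne j}d(x_i,x_j)>0$, and show that every $x'\in B(x,\delta/2)$ again has at least $n$ points in its orbit. This exhibits a neighborhood of $x$ inside $X_{\ge n}$.

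By \Cref{cor.orbits2} each $x_i$ can be written as $x_i=x\triangleleft\varphi_i$ for some state $\varphi_i\in S(A_r)$. For a competing point $x'$ I would examine the probability measures $\nu_i:=x'\triangleleft\varphi_i$. Two facts drive the argument. First, \Cref{cor.old} (Kantorovi\u c contraction) gives $d(\delta_{x_i},\nu_i)=d(x\triangleleft\varphi_i,\,x'\triangleleft\varphi_i)\le d(x,x')$, so $\nu_i$ is Kantorovi\u c-close to the Dirac mass at $x_i$. Second, $\nu_i$ is supported inside $\orb_{x'}$: the map \eqref{eq.comp} attached to $x'$ factors as the restriction $\cC(X)\to\cC(\orb_{x'})$ followed by an inclusion (its image is $\cC(\cM_{x'})=\cC(\orb_{x'})$ by \Cref{cor.orbits2}), so $\nu_i=\varphi_i\circ(x'\otimes\id)\rho$ annihilates every function vanishing on $\orb_{x'}$.

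From here I would convert proximity of $\nu_i$ to $\delta_{x_i}$ into an honest orbit point near $x_i$. Since the Kantorovi\u c distance to a Dirac mass is the average distance, $\int_X d(y,x_i)\,\mathrm{d}\nu_i(y)\le d(x,x')$, so Markov's inequality yields $\nu_i(\overline{B}(x_i,r))\ge 1-d(x,x')/r>0$ for every $r>d(x,x')$. Because $\mathrm{supp}\,\nu_i\subseteq\orb_{x'}$, the compact set $\orb_{x'}$ meets each such ball; intersecting the nested nonempty compacta as $r\downarrow d(x,x')$ produces a point $p_i\in\orb_{x'}$ with $d(p_i,x_i)\le d(x,x')$. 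For $d(x,x')<\delta/2$ the triangle inequality separates them, $d(p_i,p_j)\ge\delta-2d(x,x')>0$, so $p_1,\dots,p_n$ are $n$ distinct elements of $\orb_{x'}$ and $|\orb_{x'}|\ge n$.

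The two displayed inequalities are routine. The step that needs genuine care, and which I expect to be the main obstacle, is the support containment $\mathrm{supp}\,\nu_i\subseteq\orb_{x'}$: this is what pins the approximating mass to the orbit of $x'$ rather than letting it leak into unrelated regions of $X$, and without it Kantorovi\u c-closeness alone would not manufacture orbit points near the $x_i$. It is handled exactly by the identification $\cM_{x'}=\orb_{x'}$ from \Cref{cor.orbits2} together with the description of \eqref{eq.comp} as a restriction map onto functions on the orbit.
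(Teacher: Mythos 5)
Your proof is correct, but it takes a genuinely different technical route from the paper's. Both arguments share the same skeleton: pick $n$ distinct points $x_i=x\triangleleft\varphi_i$ in $\orb_x$, hit the nearby point with the same states $\varphi_i$, and produce $n$ distinct points of its orbit near the $x_i$, using (as you correctly isolate) the containment $\mathrm{supp}(x'\triangleleft\varphi_i)\subseteq\cM_{x'}=\orb_{x'}$ from \Cref{cor.orbits2}. The difference is the localization mechanism. The paper invokes \Cref{th.old} directly: the coupling supported on pairs at exact distance $d(y,x)$ forces $y\triangleleft\varphi_i$ to live on the \emph{sphere} of that radius about $x_i$ — but for this to give disjoint supports one needs $d(y,\orb_x)=d(y,x)$, so the paper must first run a normalization step (replace $y$ by a suitable point of $\orb_y$, justified via \Cref{th.old} again together with \Cref{le.dist_between_orbits}). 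You instead use only the weaker consequence \Cref{cor.old} (Kantorovi\u c contraction), and convert $d(\delta_{x_i},\,x'\triangleleft\varphi_i)\le d(x,x')$ into an orbit point in the closed ball $\overline{B}(x_i,d(x,x'))$ by Markov's inequality and a nested-compacta argument. This buys you a cleaner, more robust proof: no normalization step, no appeal to \Cref{le.dist_between_orbits}, and no need for exact-distance couplings — approximate localization in small disjoint balls suffices for counting. What the paper's sharper sphere-localization buys is not needed here, but the same exact-sphere technique is reused later (e.g.\ in the proofs of \Cref{le.key} and \Cref{le.r+epsilon}), which is presumably why the author sets it up in this form. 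One cosmetic point: since the paper's convention is that $B(x,r)$ denotes the \emph{closed} ball, you should phrase your neighborhood as the open ball (equivalently, require $d(x,x')<\delta/2$ strictly, as you in fact do when proving $d(p_i,p_j)\ge\delta-2d(x,x')>0$).
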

\begin{proof} 
Let $x$ be a point whose orbit has at least $n$ elements, and $y\in X$ any point outside $\orb_x$ (if $\orb_x=X$ there is nothing to prove). 

First, I claim that we may assume without loss of generality that $d(y,\orb_x)=d(y,x)$. The argument for why this is so is very similar to the one from the proof of \Cref{le.dist_between_orbits}. 

By compactness $d(y,\orb_x)=d(y,x')$ for some $x'\in\orb_x$. Now let $\varphi\in S(A_r)$ be a state for which 
\begin{equation}\label{eq:1}
 x'\triangleleft \varphi=x  
\end{equation}
(one exists by \Cref{cor.orbits2}). By \Cref{th.old} there is a coupling $\pi\in\Pi(y\triangleleft\varphi,x'\triangleleft \varphi)$ supported on the set of pairs of points of $X$ that are $d(y,\orb_x)$ apart. This together with \Cref{eq:1} implies that  $y\triangleleft \varphi$ is supported on the sphere of radius $d(y,\orb_x)$ around $x$, and hence that there are points in $\orb_y$ on this sphere. To prove the claim, simply substitute such a point for $y$; its distance from $\orb_x$ is still $d(y,\orb_x)$ by \Cref{le.dist_between_orbits}.  

Now let $x_i$ be $n$ distinct points on $\orb_x$ for $0\le i\le n-1$ with $x_0=x$, and choose $y$ as above so that $d(y,\orb_x)$ is very small (smaller than say half of $\min_{i,j}d(x_i,x_j)$). Just as in the argument above, $n$ states $\varphi_i\in S(A_r)$ such that $x\triangleleft\varphi_i=x_i$ will turn $y$ into measures supported on the spheres of radius $d(y,\orb_x)$ centered respectively at $x_i$. Since these spheres are disjoint, they contain $n$ distinct points in $\orb_y$. 

The conclusion is that every $y$ whose distance from $\orb_x$ is small enough contains has an orbit of cardinality at least $n$, as desired. 
\end{proof}

\begin{lemma}\label{le.orbits_dense}
If $X$ is a compact connected Riemannian manifold and $d$ is its underlying geodesic metric, then the set $X_{\ge n}$ from \Cref{le.orbits_open} is either empty or dense. 
\end{lemma}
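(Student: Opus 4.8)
The plan is to combine the openness of $X_{\ge n}$ from \Cref{le.orbits_open} with the connectedness of $X$. Since $X_{\ge n}$ is open, to prove that it is empty or dense it suffices to show that whenever it is non-empty its closure $\overline{X_{\ge n}}$ is also open: being then clopen and non-empty in the connected space $X$, it would equal $X$, which is exactly density. Writing $W$ for the exterior $X\setminus\overline{X_{\ge n}}$, one checks readily that $W$ is precisely the interior of the locus $X_{<n}$ of points with fewer than $n$ orbit points, so the statement to rule out is the coexistence of a non-empty $X_{\ge n}$ and a non-empty $W$. The only boundary points that can fail to be interior to $\overline{X_{\ge n}}$ are the genuine interface points $p\in\overline{X_{\ge n}}\cap\overline{W}$, and it is for these that the Riemannian structure must be used; at any such $p$ the orbit $\orb_p$ is finite, with $m:=|\orb_p|<n$.

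The two analytic inputs I would set up first are a continuity and a homogeneity statement for orbits. For continuity, \Cref{cor.old} shows that the state action is distance non-increasing, which upgrades to the assertion that the orbit map $x\mapsto\orb_x$ is $1$-Lipschitz from $X$ into the space of compact subsets with the Hausdorff metric: for $q=x\triangleleft\varphi\in\orb_x$ one has $d(q,\orb_{x'})\le d_{\mathrm{Kant}}(x\triangleleft\varphi,x'\triangleleft\varphi)\le d(x,x')$, using that $\mathrm{supp}(x'\triangleleft\varphi)\subseteq\orb_{x'}$ (\Cref{cor.orbits2}), and symmetrically. Thus for any sequence $x_k\to p$ in $X_{\ge n}$ we get $\orb_{x_k}\to\orb_p$ in Hausdorff distance, while $|\orb_{x_k}|\ge n>m=|\orb_p|$. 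For homogeneity, \Cref{cor.orbits} furnishes, for each pair of points in a common orbit, states $\varphi,\ol{\varphi}$ transporting one to the other and back; I would use these to move an approaching family near one point of $\orb_p$ to an approaching family near any other, so that the ``number of sheets of $X_{\ge n}$'' accumulating at a point of $\orb_p$ is an invariant of the orbit, the same at $p$ as at every $p_i$. A pigeonhole count on the $\ge n$ points of $\orb_{x_k}$ distributed among the $m$ clusters near $p_0,\dots,p_{m-1}$ then forces at least two points of $\orb_{x_k}$ to collapse onto a single $p_i$, and hence, by homogeneity, at least two sheets of $X_{\ge n}$ to approach $p$ itself.

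The main obstacle is the final step: upgrading this accumulation of several sheets at $p$ to genuine local density, i.e.\ ruling out that a neighbourhood component near $p$ lies entirely in $W$. This is exactly where the geodesic metric of the manifold is indispensable, and where the analogous purely topological statement is false. The model to keep in mind is an isometric reflection of an interval or sphere: the ``wall'' $\partial X_{\ge n}$ (the fixed locus) is genuinely present but nowhere dense, and the point is that the two local sides of such a wall are both swept into $X_{\ge n}$ by the orbit, never into $W$, precisely because a small-orbit point near $p$ would have to be matched, under the transporting states, with another orbit point near $p$ lying across the wall, which the merging count forbids. I would make this rigorous by working in a geodesically convex ball $B(p,r)$, where minimizing geodesics let one connect points across $\partial X_{\ge n}$ and track how the sheets emanating from $p$ fill the punctured ball; the outcome is that $B(p,r)\cap W=\emptyset$, so $p$ is interior to $\overline{X_{\ge n}}$. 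Establishing this ``no wall has an interior on one side'' dichotomy from metric hypotheses alone --- without the smooth submanifold structure that makes the fixed locus automatically of positive codimension --- is the crux of the argument, and is the step I expect to require the most care.
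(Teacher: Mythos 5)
Your reduction to ``the closure of $X_{\ge n}$ is open'' is sound, and your Lipschitz estimate for the orbit map (via \Cref{cor.old} and \Cref{cor.orbits2}) together with the pigeonhole count is correct as far as it goes. But the proposal has a genuine gap exactly where you say it does: the step ruling out that a neighbourhood component near an interface point $p$ lies entirely in $W=X\setminus\overline{X_{\ge n}}$ is never proved, only described as ``the crux.'' Nothing in your setup --- neither the Hausdorff-continuity of orbits nor the ``sheet'' homogeneity (which, as stated, is not even a well-defined invariant: states transport points to probability measures, not neighbourhoods to neighbourhoods, so ``number of sheets accumulating at a point'' has no precise meaning with the tools available) --- produces the required contradiction with a one-sided wall. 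Since this is precisely the point where the Riemannian hypothesis must enter, the proposal as written does not prove the lemma.

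The paper avoids the boundary analysis entirely and proves density pointwise: given arbitrary $y\in X$ and $x\in X_{\ge n}$ with $d(y,\orb_x)=d(y,x)=\ell$ (achievable by compactness and \Cref{le.dist_between_orbits}), it takes $z=\gamma(\varepsilon)$ on the minimizing geodesic $\gamma$ from $y$ to $x$ and shows $z\in X_{\ge n}$ directly. The mechanism you are missing: picking states $\varphi_i$ with $x\triangleleft\varphi_i=x_i$ ($n$ distinct orbit points), \Cref{th.old} yields couplings forcing the existence of points $y_i\in\mathrm{supp}(y\triangleleft\varphi_i)$ and $z_i\in\mathrm{supp}(z\triangleleft\varphi_i)$ with $d(y_i,z_i)=\varepsilon$, $d(z_i,x_i)=\ell-\varepsilon$, $d(y_i,x_i)=\ell$; the concatenated paths $y_i\to z_i\to x_i$ then realize the distance $\ell$, hence are genuine geodesics, and if $z_i=z_j$ for $i\ne j$ the broken (non-collinear) concatenation $y_i\to z_i=z_j\to x_j$ would have length $\ell$ without being a geodesic, giving $d(y_i,x_j)<\ell$ and contradicting $d(\orb_y,\orb_x)=\ell$ from \Cref{le.dist_between_orbits}. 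So the $z_i$ are $n$ distinct points of $\orb_z$, i.e.\ $z\in X_{\ge n}$, and since $\varepsilon>0$ is arbitrary and $z$ is $\varepsilon$-close to $y$, density follows with no clopen or sheet argument at all. If you want to salvage your route, this geodesic-transport argument is exactly what would have to replace your unproven final step.
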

\begin{proof}
Assume that $X_{\ge n}$ is non-empty, and let $y\in X$ be arbitrary with $x\in X_{\ge n}$ such that $d(y,\orb_x)=d(y,x)$. 

Connect $y$ and $x$ by a minimizing geodesic $\gamma$, i.e. one whose length $\ell$ is exactly $d(y,x)$. Parametrize $\gamma$ by arc length so that $\gamma(0)=y$ and $\gamma(\ell)=x$ and let $z=\gamma(\varepsilon)$ for some small $\varepsilon>0$. I claim that $z\in X_{\ge n}$. Assuming this for now, we can finish the proof: Any point $y$ has elements $z\in X_{\ge n}$ arbitrarily close to it. 

It remains to prove the claim that $z$ belongs to $X_{\ge n}$. 

Let $x_i$, $0\le i\le n-1$ be $n$ distinct points in $\orb_x$ and $\varphi_i\in S(A_r)$ states such that $x_i=x\triangleleft \varphi_i$. By \Cref{th.old} there are $(y\triangleleft\varphi_i,z\triangleleft\varphi_i)$-couplings supported on the set of pairs of points $\varepsilon$ apart, so we can find $y_i\in\mathrm{supp}(y\triangleleft\varphi_i)$ and $z_i\in \mathrm{supp}(z\triangleleft\varphi_i)$ with $d(y_i,z_i)=\varepsilon$. Moreover, just as in the proofs of \Cref{le.dist_between_orbits,le.orbits_dense} we also have  
\[
 d(y_i,x_i)=\ell,\ d(z_i,x_i) = \ell-\varepsilon. 
\]
The situation is depicted schematically below.
\begin{equation*}
  \begin{tikzpicture}[auto,baseline=(current  bounding  box.center)]
    \draw (0,0) node[circle, inner sep=2pt, fill=black, label={right:{$y$}}] (y) {};
    \draw (2,3) node[circle, inner sep=2pt, fill=black, label={right:{$x$}}] (x) {};
    \draw (0,1.5) node[circle, inner sep=2pt, fill=black, label={right:{$z$}}] (z) {};
    \draw (y)  .. controls +(-.5,.2) and +(-.5,-.5) .. node[pos=.5,auto] {$\varepsilon$} (z);
    \draw (z)  .. controls +(.5,.5) and +(-.5,-.5) .. node[pos=.5,auto] {$\ell-\varepsilon$}  (x);
    \draw [->,decorate,decoration={snake,amplitude=.4mm,segment length=2mm,post length=1mm}]
        (-1,1.5) -- (-3,1.5)
	node [above,align=center, midway]
	{$\triangleleft\varphi_i$};
    \draw (-4,0) node[circle, inner sep=2pt, fill=black, label={right:{$y_i$}}] (yi) {};
    \draw (-5,3) node[circle, inner sep=2pt, fill=black, label={right:{$x_i$}}] (xi) {};
    \draw (-4,1.5) node[circle, inner sep=2pt, fill=black, label={right:{$z_i$}}] (zi) {};
    \draw (yi)  .. controls +(.5,.2) and +(.5,-.5) .. node[pos=.5,auto] {$\varepsilon$} (zi);
    \draw (zi)  .. controls +(-.5,.5) and +(-.5,-.5) .. node[pos=.5,auto] {$\ell-\varepsilon$}  (xi);    
    \node () at (-4,-1) {$\gamma_i$};
    \node () at (0,-1) {$\gamma$};
  \end{tikzpicture}
\end{equation*}
All of the arcs depict minimizing geodesics constructed as follows: For each $i$ connect $y_i$ to $z_i$ by a minimizing geodesic, as well as $z_i$ to $x_i$. The two geodesics continue one another at $z_i$ (we will say that they are collinear): The composite path $\gamma_i$ realizes the minimal distance $\ell$ between $y_i$ and $x_i$ and so it must itself be a geodesic.

It remains to argue that $z_i$ are distinct. Indeed, if $z_i=z_j$ for $i\ne j$ then $\gamma_i$ and $\gamma_j$ intersect at this common point, but they cannot coincide because $x_i\ne x_j$. It follows that the length-$\varepsilon$ segment of $\gamma_i$ from $y_i$ to $z_i=z_j$ and the length-$(\ell-\varepsilon)$ segment of $\gamma_j$ from $z_j=z_i$ to $x_j$ are not collinear: 
\begin{equation*}
  \begin{tikzpicture}[auto,baseline=(current  bounding  box.center)]
    \draw (0,0) node[circle, inner sep=2pt, fill=black, label={right:{$y_i$}}] (yi) {};
    \draw (-1,3) node[circle, inner sep=2pt, fill=black, label={right:{$x_i$}}] (xi) {};
    \draw (0,1.5) node[circle, inner sep=2pt, fill=black, label={right:{$z_i=z_j$}}] (zi) {};
    \draw[red,decoration={markings, mark=at position .5 with {\arrow[line width=.5mm]{>}}}, postaction={decorate}] (yi)  .. controls +(.5,.2) and +(.5,-.5) .. (zi);
    \draw[decoration={markings, mark=at position .5 with {\arrow[line width=.5mm]{>}}}, postaction={decorate}] (zi)  .. controls +(-.5,.5) and +(-.5,-.5) .. (xi);
    \draw (-1,.5) node[circle, inner sep=2pt, fill=black, label={left:{$y_j$}}] (yj) {};
    \draw (2,2.5) node[circle, inner sep=2pt, fill=black, label={right:{$x_j$}}] (xj) {};  
    \draw[decoration={markings, mark=at position .5 with {\arrow[line width=.5mm]{>}}}, postaction={decorate}] (yj)  .. controls +(.5,.25) and +(-.2,-.5) .. (zi);
    \draw[red,decoration={markings, mark=at position .5 with {\arrow[line width=.5mm]{>}}}, postaction={decorate}] (zi)  .. controls +(.2,.5) and +(-.5,-.1) .. (xj);
  \end{tikzpicture}
\end{equation*}
The composite of these two non-collinear geodesic arcs (red path in the picture above) cannot be minimizing from $y_i$ to $x_j$, so $d(y_i,x_j)$ is strictly smaller than $\ell$. This contradicts the fact that $d(\orb_y,\orb_x)$ is equal to $\ell=d(y,x)=d(y,\orb_x)$ by \Cref{le.dist_between_orbits}.
\end{proof}

We henceforth specialize to the setup from \Cref{le.orbits_dense}: $(X,g)$ is a compact, connected Riemannian manifold and $d$ is its geodesic metric.

\begin{corollary}\label{cor.max_card}
 Under these assumptions, if all orbits are finite then they are uniformly bounded in size.  
\end{corollary}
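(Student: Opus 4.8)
The plan is to argue by contradiction using the Baire category theorem, leveraging the topological structure of the sets $X_{\ge n}$ established in the two preceding lemmas.

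First I would reformulate both hypothesis and conclusion in terms of the nested sets $X_{\ge 1}\supseteq X_{\ge 2}\supseteq\cdots$. A point $x$ has an infinite orbit precisely when $x\in X_{\ge n}$ for every $n$, i.e. when $x\in\bigcap_{n\ge 1}X_{\ge n}$. Thus the standing assumption that all orbits are finite is exactly the statement $\bigcap_{n}X_{\ge n}=\emptyset$. On the other side, the desired conclusion—that orbit sizes are uniformly bounded—amounts to the existence of some $N$ with $X_{\ge N}=\emptyset$, since then every orbit has fewer than $N$ elements.

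Now suppose toward a contradiction that no such $N$ exists, so that $X_{\ge n}\ne\emptyset$ for all $n$. By \Cref{le.orbits_open} each $X_{\ge n}$ is open, and by \Cref{le.orbits_dense} each non-empty $X_{\ge n}$ is dense; hence under this supposition every $X_{\ge n}$ is simultaneously open and dense. Since $X$ is a compact metric space it is complete, so the Baire category theorem applies and the countable intersection $\bigcap_{n}X_{\ge n}$ is again dense, in particular non-empty. Any point of this intersection lies in $X_{\ge n}$ for every $n$ and so has an orbit of cardinality at least $n$ for all $n$, i.e. an infinite orbit—directly contradicting the hypothesis. Therefore some $X_{\ge N}$ is empty, and the orbit cardinalities are bounded above by $N-1$.

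I do not expect a genuine obstacle here: the substantive work (openness and the dichotomy empty-or-dense) has already been discharged in \Cref{le.orbits_open,le.orbits_dense}, so the argument is essentially forced once one recognizes that the nested open dense sets $X_{\ge n}$ are precisely the setting for Baire category. The only point requiring minor care is the elementary bookkeeping identifying membership in all of the $X_{\ge n}$ with possession of an infinite orbit, which follows immediately from the definition of $X_{\ge n}$.
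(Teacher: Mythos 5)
Your proof is correct and follows exactly the paper's own argument: assume all $X_{\ge n}$ are non-empty, invoke \Cref{le.orbits_open,le.orbits_dense} to see they are open and dense, and apply Baire's Category Theorem to obtain a point with infinite orbit, contradicting the hypothesis. The only difference is that you spell out the elementary bookkeeping (that a uniform bound is equivalent to some $X_{\ge N}$ being empty) which the paper leaves implicit.
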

\begin{proof}
  In other words, we want to prove that the set of orbit cardinalities $|\orb_x|$ has a maximum as $x$ ranges over $X$. 

Suppose all $X_{\ge n}$ are non-empty. They are then all open and dense by \Cref{le.orbits_open,le.orbits_dense}, and their intersection is non-empty (and in fact dense) by Baire's Category Theorem. This contradicts the finiteness of all orbits.
\end{proof}

\begin{definition}\label{def.generic}
 Under the hyptheses of \Cref{cor.max_card} a \define{generic} point $x\in X$ is one whose orbit has maximal cardinality.  
\end{definition}

The following result will be key to the proof of \Cref{th.fin_orb=>classical}.

\begin{lemma}\label{le.key}
  Under the hypotheses of \Cref{th.fin_orb=>classical} let $x\in X$ be a generic point and $\varphi\in S(A_r)$ be a state such that $x\triangleleft \varphi=x$. Then, $y\triangleleft\varphi=y$ for all points $y\in X$. 
\end{lemma}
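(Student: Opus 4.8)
The plan is to study the fixed-point set $F=\{y\in X\ :\ y\triangleleft\varphi=\delta_y\}$ and show it is all of $X$. Since $y\mapsto y\triangleleft\varphi$ and $y\mapsto\delta_y$ are weak-$*$ continuous maps $X\to\pr(X)$, the set $F$ is closed, and by hypothesis $x\in F$. The argument splits into two independent halves: first, that genericity forces $F$ to contain an entire ball around $x$; and second, that any fixed ball propagates across all of $X$ along geodesics, irrespective of the connectivity of the locus of generic points. Throughout I will use the elementary observation that $\mathrm{supp}(z\triangleleft\psi)\subseteq\orb_z$ for every state $\psi$ and point $z$: indeed $(z\triangleleft\psi)\triangleleft h=z\triangleleft h=\mu_z$ by \Cref{eq.haar}, and writing $\nu=z\triangleleft\psi$ one has $\mu_z=\int_X\mu_w\,\mathrm d\nu(w)$, so since $\orb_z=\mathrm{supp}\,\mu_z$ is finite (hence closed) and the $\orb_w$ partition $X$ (\Cref{cor.orbits2}), $\nu$-almost every $w$ must lie in $\orb_z$.

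For the first half, write $\orb_x=\{x_0=x,x_1,\dots,x_{m-1}\}$ with $m=|\orb_x|$ maximal (\Cref{def.generic}), and pick states $\varphi_i\in S(A_r)$ with $x\triangleleft\varphi_i=x_i$ (\Cref{cor.orbits2}), taking $\varphi_0=\varphi$. For $y$ with $r:=d(x,y)$ smaller than half the minimal separation of the $x_i$, \Cref{th.old} applied to the pair $(x,y)$ and the state $\varphi_i$ produces a coupling with first marginal $\delta_{x_i}$ supported on pairs at distance $r$; hence $y\triangleleft\varphi_i$ is supported on the sphere around $x_i$ of radius $r$, and by the observation above also on the finite set $\orb_y$. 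As $m$ is maximal we have $|\orb_y|\le m$; since the $m$ balls $B(x_i,r)$ are pairwise disjoint and each contains the support of the probability measure $y\triangleleft\varphi_i$, each meets $\orb_y$, so the pigeonhole principle forces $|\orb_y|=m$ with exactly one orbit point per ball and each $y\triangleleft\varphi_i$ equal to the Dirac mass there. As $y$ itself lies in $\orb_y$ and at distance $r$ from $x$, it is the unique orbit point in $B(x_0,r)$, whence $y\triangleleft\varphi=y\triangleleft\varphi_0=\delta_y$. Thus a whole ball around $x$ lies in $F$.

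The second half is the crux. Let $\iota>0$ be the injectivity radius of the compact manifold. The key is an extrapolation step: if $a,b\in F$ with $\ell=d(a,b)$ and $2\ell<\iota$, then the point $c$ obtained by prolonging the (unique) minimizing geodesic from $a$ through $b$ by a further length $\ell$ again lies in $F$. Indeed, fixedness of $a$ and of $b$ forces, via \Cref{th.old} exactly as above, $\mathrm{supp}(c\triangleleft\varphi)$ to lie in the intersection of the spheres of radii $2\ell$ and $\ell$ about $a$ and $b$; any $w$ in this intersection satisfies $d(a,w)=d(a,b)+d(b,w)$, so $b$ lies on a minimizing geodesic from $a$ to $w$ of length $2\ell<\iota$, which by uniqueness of short geodesics must be the prolongation of $[a,b]$, giving $w=c$ and $c\triangleleft\varphi=\delta_c$. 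Using the fixed ball from the first half, I then march along an arbitrary minimizing geodesic $\gamma$ from $x$ to any target $q$: choosing a step $s$ smaller than both the ball's radius and $\iota/2$, the points $\gamma(0),\gamma(s)$ are fixed, and repeated extrapolation shows $\gamma(ks)\in F$ for all admissible $k$; letting $s\to0$ and using that $F$ is closed yields $\gamma\subseteq F$, in particular $q\in F$. The main obstacle is precisely this propagation: a naive connectedness argument fails because the set of generic points, though open and dense, may be disconnected by the non-generic locus, so one genuinely needs to extend fixedness across non-generic points, and the geodesic-doubling argument achieves this using only fixedness and the local uniqueness of geodesics, never genericity (which serves only to seed the initial open set) nor any curvature hypothesis.
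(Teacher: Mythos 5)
Your proof is correct and takes essentially the paper's approach: your first half is the paper's Case 1 (disjoint spheres around the $n$ orbit points of the generic point $x$ force $y\triangleleft\varphi = y$ for $y$ near $x$), and your extrapolation step is the paper's Case 2 (the support of $c\triangleleft\varphi$ lies on two geodesic spheres centered at the fixed collinear points $a$ and $b$, and these spheres meet only in $c$ below the injectivity radius). The only divergence is bookkeeping --- a discrete doubling induction plus closedness of $F$ in place of the paper's open-and-closed connectedness argument along the minimizing geodesic from $x$ to $y$ --- and the failure mode you warn against does not arise in the paper, since its connectedness argument runs along that geodesic rather than along the generic locus.
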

\begin{proof}
Let $n=|\orb_x|$ and denote the elements of the orbit by $x_i$, $0\le i\le n-1$ with $x_0=x$. Fix an arbitrary point $y\in X$, and let $\gamma$ be a minimizing geodesic arc from $x$ to $y$. We have to show that the set
\begin{equation}\label{eq.S}
 S=\{z\in\gamma\ |\ \text{ all points on the segment }[x,z]\subseteq\gamma\text{ are fixed by }\triangleleft\varphi\} 
\end{equation}
comprises all of $\gamma$. Since $\gamma$ is connected, it is enough to show that $S$ is both open and closed (it is non-empty, since $x\in S$). 

Closure is immediate since $\triangleleft\varphi$ is a continuous self-map of $\pr(X)$ with respect to the usual weak$^*$ topology on the latter set regarded as a subset of $\cC(X)^*$.  

To see that $S$ is also open, let $z\in S$ be a point different from $y$ (if $z=y$ there is nothing to prove). We split the remainder of the proof into two cases. 

{\bf Case 1: $z=x$.} Let $z'\in\gamma$ be very close to $x=z$;(closer, say, than half the number $\min_{x'\ne x''\in\orb_x}d(x',x'')$. 

 $\triangleleft\varphi$ sends $z'$ to probability measure supported on the sphere of radius $d(x,z')$ centered at $x$. If this probability measure is not $z'$ itself, then there must be at least one other point from $\orb_{z'}$ on this sphere. 

Just as in the proof of \Cref{le.orbits_open}, states $\varphi_i$ that send $x$ to $x_i$, $0\le i\le n-1$ of $\orb_x$ will turn $z$ into measures supported on the spheres $S_i$ of radius $d(x,z')$ around $x_i$ respectively. Since $d(x,z')$ is so small that these spheres do not intersect, there is at least one element of $\orb_{z'}$ for each sphere. But there are $n$ spheres and by assumption $n$ is the maximal cardinality of an orbit, so there is \define{exactly} one element of $\orb_{z'}$ on each $S_i$. 

Now, $\triangleleft\varphi$ fixes $x$ and so as before, sends $z'$ to a measure supported on $S_0$ (the sphere of radius $d(x,z')$ centered at $x$). We have just established however that the orbit of $z'$ intersects $S_0$ in a single point, so $z'\triangleleft\varphi=z'$. This applies to any $z'$ so long as it is close enough to $z=x$, finishing the proof of Case 1. 

{\bf Case 2: $z$ is in the interior of $\gamma$.} Let $x'$ and $z'$ be point in the intervals $(x,z)$ and $(z,y)$ of $\gamma$ respectively and very close to $z$. 
 
By assumption, both $x'$ and $z$ are fixed by $\triangleleft\varphi$. This implies that $z'\triangleleft\varphi$ is supported both on the geodesic sphere $S_1$ of radius $d(x',z')$ around $x'$ and the sphere $S_2$ of radius $d(z,z')$ around $z$ (see the figure below). 
  \begin{equation*}
    \begin{tikzpicture}[auto,baseline=(current  bounding  box.center)]
      \coordinate (1) at (0,0);
      \coordinate (2) at (1,1);
      \draw (-3,.2) node[circle, inner sep=2pt, fill=black, label={left:{$x$}}] (x) {};      
      \draw (-.7,0) node[circle, inner sep=2pt, fill=black, label={90:{$x'$}}] (x') {};
      \draw (1,1) node[circle, inner sep=2pt, fill=black, label={right:{$z$}}] (z) {};
      \draw (1.5,1.5) node[circle, inner sep=2pt, fill=black, label={right:{$z'$}}] (z') {};
      \draw (2,2.5) node[circle, inner sep=2pt, fill=black, label={right:{$y$}}] (y) {};
      \node (circ1) [label=135:$S_1$,draw,circle through=(z')] at (1) {};
      \node (circ2) [label=155:$S_2$,draw,circle through=(z')] at (2) {};
      \draw[decoration={markings, mark=at position .5 with {\arrow[line width=.5mm]{>}}}, postaction={decorate}] (x')  .. controls +(.5,.1) and +(-.5,-.5) .. (z);
      \draw[] (z)  .. controls +(.5,.5) and +(-.2,-.2) .. (z');
      \draw[] (x)  .. controls +(.5,-.5) and +(-.5,-.1) .. (x');
      \draw[] (z')  .. controls +(.1,.1) and +(-.1,-.5) .. (y);
      \node () at (0,-.3) {$\gamma$};
    \end{tikzpicture}
  \end{equation*}
If $x'$ and $z'$ are close enough to $z$ though (for example so that $d(x',z')$ is smaller than the injectivity radius of $(X,g)$) these two spheres only intersect in $z'$, and hence $z'\triangleleft\varphi=z'$. This applies to all $z'\in (z,y)$, showing that indeed $z$ is in the interior of the set \Cref{eq.S}. 
\end{proof}

\begin{proof_of_fin_orb}
Throughout the proof we will assume that $A$ is reduced (passing from $\rho$ to $\rho_r$ does not affect the classical character of an action; see \Cref{rem.classical}). By \Cref{pr.criterion}, we will be done if we prove that every point of $X$ that is generic in the sense of \Cref{def.generic} is a pivot. Consequently, this is now the goal. 

Let $x\in X$ be generic. We have to show that if $\psi$ and $\varphi$ are states on $A$ such that $x'=x\triangleleft\psi=x\triangleleft\varphi$ is a point, then $y\triangleleft\psi=y\triangleleft\varphi$ for every $y\in X$ (this is what it means for $x$ to be a pivot; see \Cref{def.pivot}).

Recall the notation $\ol{\varphi}=\varphi\circ\kappa$ (where $\kappa:A\to A$ was the antipode). We know from \Cref{cor.orbits} that $x\triangleleft\left(\psi\ol{\varphi}\right)=x$, so by \Cref{le.key} $y\triangleleft\left(\psi\ol{\varphi}\right)=y$ for all $y\in X$. Applying $\triangleleft\varphi$ to this last equality we get 
\begin{equation}\label{eq.proof_of_fin_orb}
 y\triangleleft\left(\psi\ol{\varphi}\varphi\right) = y\triangleleft\varphi.
\end{equation}
{\bf Claim: $y\triangleleft \psi$ is a point.} Indeed, otherwise it would be a non-trivial (perhaps continuous, i.e. infinite) convex combination of points. Since $\triangleleft\ol{\varphi}$ preserves convex combinations and cannot compress a non-Dirac measure to a single point (e.g. because it cannot send distinct points to the same point by \Cref{cor.orbits}), 
\[
 y=y\triangleleft\left(\psi\ol{\varphi}\right) = (y\triangleleft\psi)\triangleleft\ol{\varphi}
\]
could not be a point. This proves the claim. It applies equally well to the state $\ol{\varphi}$, since $x=x'\triangleleft\ol{\varphi}=x\triangleleft\ol{\psi}$ (so that we could substitute $x'$, $\ol{\varphi}$, $\ol{\psi}$ for $x$, $\psi$ and $\varphi$ respectively). 

This now means that $\triangleleft\ol{\varphi}$ turns points into points, so by \Cref{cor.orbits} $\triangleleft\left(\ol{\varphi}\varphi\right)$ is the identity on $X$. But then the left hand side of \Cref{eq.proof_of_fin_orb} is $y\triangleleft\psi$ and we get the desired conclusion: $y\triangleleft\psi=y\triangleleft\varphi$. 
\end{proof_of_fin_orb}

\section{Negative curvature and rigidity}\label{se.neg_curv}

The current standing assumptions are as follows: $(X,g)$ is a compact connected Riemannian manifold, $d$ is its geodesic metric, and $\rho$ is an isometric action of a quantum group $A$ on $(X,d)$. Moreover, we will henceforth assume that $A$ is reduced. As observed before (e.g. in \Cref{rem.classical} or the proof of \Cref{th.fin_orb=>classical}) this is harmless for the purpose of showing that an action is classical.  

We will denote the closed ball of radius $r$ centered at $x$ by $B(x,r)$ and the corresponding sphere by $S(x,r)$. For background on Riemannian geometry in general and negatively curved manifolds in particular we refer variously to \cite{Bal95,BGS,doC92,GHL}.

We can now state the main result of the section.

\begin{theorem}\label{th.main}
 If $(X,g)$ has negative sectional curvature then $\rho$ has finite orbits.   
\end{theorem}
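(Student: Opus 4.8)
The plan is to show that no orbit can be infinite; since each orbit $\orb_x$ equals $\mathrm{supp}(\mu_x)$ and is therefore a closed, hence compact, subset of $X$ (see \Cref{cor.orbits2}), an infinite orbit would have an accumulation point, and that point again lies in the orbit because orbits are the $\simeq$-classes. So I would argue by contradiction: assume some orbit is infinite, replace the base point by an accumulation point of it, and thereby assume there are distinct $x_k\in\orb_x$ with $x_k\to x$. By \Cref{cor.orbits2} choose states $\varphi_k\in S(A_r)$ with $x\triangleleft\varphi_k=x_k$; the goal is to extract enough rigidity from these transports to contradict the strictly negative curvature.

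Next I would localize. Fix a geodesically convex ball $B(x,r)$ with $r$ below the convexity radius, so that geodesics inside it are unique and minimizing and, crucially, strict negativity of the curvature makes the squared-distance functions strictly convex and the small spheres $S(x',s)$ strictly convex. For each $k$, \Cref{th.old} forces $y\triangleleft\varphi_k$ to be supported on $S(x_k,d(x,y))$ for every $y\in B(x,r)$. The decisive step is to upgrade this to the statement that $\triangleleft\varphi_k$ sends points near $x$ to \emph{points} and acts on a neighborhood of $x$ as a genuine local isometry $f_k$ with $f_k(x)=x_k$. The route I have in mind is to feed \Cref{th.old} pairs $y,y'$ for which $x$ is the midpoint of $[y,y']$: then $y\triangleleft\varphi_k$ and $y'\triangleleft\varphi_k$ live on $S(x_k,s)$ and are coupled at the diameter distance $2s$, and strict convexity pins that coupling onto antipodal pairs through $x_k$; running this over enough such pairs should force each $y\triangleleft\varphi_k$ to be a single point with distances among images matching those among preimages. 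Together with \Cref{cor.orbits}, which supplies the inverse transport $\triangleleft\ol{\varphi_k}$, this realizes $f_k$ as a distance-preserving local bijection, i.e. a local Riemannian isometry.

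With the $f_k$ in hand I would conclude by classical negative-curvature rigidity. A local Riemannian isometry is determined by its $1$-jet $\bigl(f_k(x),d(f_k)_x\bigr)$; since $f_k(x)=x_k\to x$ and the differentials lie in a compact orthogonal group, a subnet of the $f_k$ converges to a local isometry fixing $x$, and because the $x_k$ are distinct the germs $f_k$ form a nondiscrete family of local isometries near $x$. Its infinitesimal generator is a nonzero Killing field, contradicting the Bochner vanishing theorem, which forbids nonzero Killing fields on a manifold of strictly negative (indeed negative Ricci) curvature. Hence every orbit is finite, and then \Cref{th.fin_orb=>classical} applies.

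I expect the main obstacle to be exactly the reduction in the second paragraph: passing from the measure-valued quantum transports $\triangleleft\varphi_k$ to honest local isometries, that is, establishing the ``points go to points'' property near $x$ \emph{without} the genericity/maximal-orbit input that powered \Cref{le.key} (which presupposed finite orbits). This is precisely where strict negativity, rather than mere nonpositivity, earns its keep, through the antipodal rigidity of small geodesic spheres; once the transports are seen to be local isometries, the finiteness is the quantum counterpart of the classical fact that the isometry group of a compact negatively curved manifold is finite, which serves as a useful consistency check.
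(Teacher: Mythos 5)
Your outline has two genuine gaps, and each is fatal on its own. The first is the step you yourself flag: upgrading the measure transports $\triangleleft\varphi_k$ to ``points go to points'' near $x$. The antipodal-coupling mechanism cannot deliver this, because every constraint you invoke is satisfied by non-Dirac measures. Concretely, let $f,g\colon B(x,r)\to X$ be two distinct distance-preserving embeddings of a small ball with $f(x)=g(x)=x_k$, and set $\mu_y=\frac12\bigl(\delta_{f(y)}+\delta_{g(y)}\bigr)$. Then each $\mu_y$ is supported on $S(x_k,d(x,y))$, and for any pair $y,z$ the measure $\frac12\bigl(\delta_{(f(y),f(z))}+\delta_{(g(y),g(z))}\bigr)$ is a coupling of $\mu_y,\mu_z$ supported exactly on pairs at distance $d(y,z)$ --- including your antipodal pairs. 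So \Cref{th.old}, even combined with strict convexity of small spheres, can never rule out such mixtures. Worse, strict negativity of curvature does not help here, because the obstruction you need is not local: a compact hyperbolic manifold is locally isometric to $\mathbb{H}^n$, which is homogeneous and isotropic, so distinct local isometric embeddings with the same value at $x$ (differing by a rotation about $x_k$) exist in abundance. This is precisely why the paper's own ``points to points'' statement (\Cref{le.key}) requires the genericity/maximal-finite-orbit hypothesis, and why the paper never attempts your reduction inside the proof of \Cref{th.main}.

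The second gap is the concluding appeal to Bochner. Bochner's theorem forbids nonzero \emph{global} Killing fields on a compact manifold of negative Ricci curvature; it says nothing about local isometries of a small ball, and indeed on a compact hyperbolic manifold the local isometries $B(x,r)\to X$ moving $x$ form a positive-dimensional family (push ambient isometries of $\mathbb{H}^n$ down through the covering), so a ``nondiscrete family of local isometries near $x$'' is not a contradiction at all. Moreover, a convergent sequence of isometries does not by itself produce a Killing field: one needs a one-parameter group, which classically comes from the compact Lie group structure (Myers--Steenrod) on the \emph{global} isometry group --- a structure germs of local isometries do not have. The local-to-global passage is exactly the hard part of the theorem, and the paper supplies it with genuinely global input that your outline has no substitute for: the finite-orbit locus $X_{\cat{fin}}$ is shown to be a closed totally geodesic submanifold (\Cref{le.r+epsilon,le.close,pr.tot_geo}); it is nonempty because the finitely many minimal closed geodesics --- a consequence of negative curvature acting through $\pi_1$, not through local geometry --- are permuted by the action and the set of points farthest from their union is finite by strict convexity of distance functions (\Cref{le.min_geo_inv,le.fin_nonempty,le.max_dist_fin}); and it cannot be proper by the same farthest-point argument. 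Your closing remark that the statement is the quantum analogue of finiteness of the isometry group is the right heuristic, but that classical fact is itself global, and your proof never crosses from the ball $B(x,r)$ to the manifold $X$.
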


In particular, by \Cref{th.fin_orb=>classical} we get

\begin{corollary}\label{cor.main}
 Under the hypotheses of \Cref{th.main} the action $\rho$ is classical. 
\qedhere 
\end{corollary}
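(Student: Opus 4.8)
The plan is to obtain the corollary as an immediate concatenation of the two main theorems of the paper, since the corollary introduces no new geometric or operator-algebraic content beyond what those theorems already supply. The entire substance lies in verifying that the hypotheses line up, and then composing the two implications in the correct order.

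First I would record that we are operating under the standing hypotheses of \Cref{th.main}: $(X,g)$ is a compact connected Riemannian manifold with geodesic metric $d$, the map $\rho$ is an isometric (and, per the section's standing convention, reduced) action of a quantum group $A$ on $(X,d)$, and $(X,g)$ has negative sectional curvature. These are precisely the hypotheses to which \Cref{th.main} applies, so I would invoke it directly to conclude that every orbit of $\rho$ is finite. This is the one genuinely nontrivial input, but it is furnished verbatim by \Cref{th.main} and requires no reproof here.

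Next I would feed this finiteness conclusion into \Cref{th.fin_orb=>classical}. That result asserts exactly that an isometric action with finite orbits on the underlying geodesic metric space of a compact connected Riemannian manifold is classical. All four of its hypotheses are now in force: $\rho$ is isometric by assumption, its orbits are finite by the previous step, $X$ is a compact connected Riemannian manifold, and $d$ is its geodesic metric. Applying \Cref{th.fin_orb=>classical} therefore yields that $\rho$ is classical, which is the assertion of the corollary.

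The only point warranting a word of care is the reducedness convention, and I would dispatch it explicitly: because the proof of \Cref{th.fin_orb=>classical} already reduces to the reduced case at its outset, and because classicality of $\rho$ is in any event equivalent to classicality of its reduced version $\rho_r$ by \Cref{rem.classical}, the standing assumption that $A$ is reduced poses no compatibility issue. I therefore anticipate no genuine obstacle in proving the corollary: all of the difficulty is borne upstream by \Cref{th.main,th.fin_orb=>classical}, and the corollary is simply their formal composition.
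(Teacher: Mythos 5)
Your proposal is correct and matches the paper exactly: the corollary is stated there as an immediate consequence of \Cref{th.main} combined with \Cref{th.fin_orb=>classical}, which is precisely your composition. Your extra remark on reducedness (via \Cref{rem.classical}) is a harmless and accurate clarification of the standing convention.
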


We will again work through a series of preparatory results, but we need some notation. 

Let $\mu$ be a probability measure supported on the geodesic sphere of radius $r$ around $x\in X$ and assume that $r$ is small enough (this will typically mean smaller than the injectivity radius of $(X,g)$). We will talk about the geodesic connecting $x$ and $\mu$; this concept extends that of a geodesic connecting two points, and is defined as follows:

Connect $x$ to the points $y$ in the support of $\mu$ via arclength-parametrized geodesics $\gamma=\gamma_{x,y}$ respectively (so that $\gamma(0)=x$ and $\gamma(r)=y$). The smallness assumption on $r$ ensures that there is at most one way to do this for each $y$. 

Now simply ``flow'' $\mu$ backwards along these geodesics to get probability measures $\mu_{x,s}$ supported on spheres of radius $s\le r$ centered at $x$. The process can even be continued at negative $t$ to get probability measures $\mu_{x,t}$ supported on spheres of radius $|t|=-t$ ``on the other side'' of $x$ as compared to $\mu$. The following picture briefly describes the situation,  

\begin{equation*}
  \begin{tikzpicture}[auto,baseline=(current  bounding  box.center)]
    \coordinate (center) at (0,0);
    \draw (center) node[circle, inner sep=2pt, fill=black, label={north:{$x$}}] (x) {};

    \node[circle,inner sep=0,minimum size={8cm}](r) at (center) {};
    \draw[red,thick] (r.15) arc (15:85:4cm);
    \draw[black,thick] (r.85) arc (85:375:4cm);
    \draw (r.15) node[circle, inner sep=2pt, fill=red, label={right:{}}] () {};
    \draw (r.85) node[circle, inner sep=2pt, fill=red, label={right:{}}] () {};

    \node[circle,inner sep=0,minimum size={6cm}](s) at (center) {};
    \draw[red,thick] (s.15) arc (15:85:3cm);
    \draw[black,thick] (s.85) arc (85:375:3cm);
    \draw (s.15) node[circle, inner sep=2pt, fill=red, label={right:{}}] () {};
    \draw (s.85) node[circle, inner sep=2pt, fill=red, label={right:{}}] () {};

    \node[circle,inner sep=0,minimum size={2cm}](t) at (center) {};
    \draw[red,thick] (t.195) arc (195:265:1cm);
    \draw[black,thick] (t.265) arc (-95:195:1cm);
    \draw (t.195) node[circle, inner sep=2pt, fill=red, label={right:{}}] () {};
    \draw (t.265) node[circle, inner sep=2pt, fill=red, label={right:{}}] () {};

    \draw (r.35) node[circle, inner sep=2pt, fill=red, label={[red]right:{$y$}}] (y) {};

    \draw (t.215) node[circle, inner sep=2pt, fill=red, label={[red]right:{}}] (yt) {};

    \draw[decoration={markings, mark=at position .5 with {\arrow[line width=.5mm]{>}}}, postaction={decorate}] (yt) to node[pos=.5,auto,swap] {$\gamma$} (y);

    \draw (s.35) node[circle, inner sep=2pt, fill=red, label={[red]right:{$y_s$}}] (ys) {};

    \node[circle,inner sep=0,minimum size={4.5cm}](s<r) at (center) {};

    \draw (s<r.50) node[circle, inner sep=0pt, fill=red, label={[red]north:{$\mu_{x,s}$}}] (muxs) {};

    \node[circle,inner sep=0,minimum size={9cm}](r<) at (center) {};

    \draw (r<.50) node[circle, inner sep=0pt, fill=red, label={[red]north:{$\mu_{x,r}$}}] (muxs) {};

    \node[circle,inner sep=0,minimum size={3.5cm}](<0) at (center) {};

    \draw (<0.-115) node[circle, inner sep=0pt, fill=red, label={[red]north:{$\mu_{x,t}$}}] (muxs) {};

  \end{tikzpicture}
\end{equation*}
where the red arcs represent the supports of the respective measures $\mu$, $s>0$ and $t<0$ are real numbers, and in general, for $u\in\bR$, $y_u$ stands for $\gamma(u)$ with $\gamma=\gamma_{x,y}$ as in the discussion above. The measures $\mu_{x,s}$ for $s$ ranging over say $[-(r+\varepsilon),r+\varepsilon]$ for small $\varepsilon>0$ can be thought of as points of a geodesic connecting $x$ and $\mu=\mu_{x,r}$.

In the next result we keep the notation we have just used: $x,y\in X$ will be points such that $r=d(x,y)$ is smaller than the injectivity radius, and $\gamma$ is the arclength-parametrized geodesic from $x$ to $y$. We set $y_u=\gamma(u)$.

\begin{lemma}\label{le.r+epsilon}
If $\varphi\in S(A)$ is such that $x'=x\triangleleft\varphi$ is a point, then denoting $\mu=\mu_{x',r}=y\triangleleft\varphi$ we have 
\begin{equation*}
  y_s\triangleleft\varphi = \mu_{x',s}
\end{equation*}
for all $s\in [-\varepsilon,r+\varepsilon]$ for $\varepsilon>0$ such that $r+\varepsilon$ is smaller than the injectivity radius of $X$. 
\end{lemma}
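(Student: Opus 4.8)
The plan is to reduce the statement to the rigidity of short minimizing geodesics (uniqueness below the injectivity radius), fed into two applications of \Cref{th.old}. Write $F_s$ for the geodesic-flow map sending a point $w\in S(x',r)$ to $\gamma_{x',w}(s)$, where $\gamma_{x',w}$ is the arclength geodesic with $\gamma_{x',w}(0)=x'$ and $\gamma_{x',w}(r)=w$. Since $r+\varepsilon$ is below the injectivity radius, this geodesic is unique, so $F_s$ is a well-defined continuous map into $S(x',|s|)$, and by construction $\mu_{x',s}=(F_s)_*\mu$. The goal is therefore to prove $y_s\triangleleft\varphi=(F_s)_*\mu$. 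Note that negative curvature plays no role here; this lemma holds for any manifold, with curvature entering only later.

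First I would pin down supports and produce a transport plan. Because $x\triangleleft\varphi=x'$ is a point and the segment of $\gamma$ from $x=\gamma(0)$ to $y_s=\gamma(s)$ has length $|s|$, which is below the injectivity radius and hence minimizing, \Cref{th.old} applied to the pair $x,y_s$ forces $y_s\triangleleft\varphi$ to be supported on $S(x',|s|)$ (the case $s=r$ gives $\mathrm{supp}\,\mu\subseteq S(x',r)$, so that $\mu_{x',s}$ makes sense). Likewise $d(y,y_s)=|r-s|$ along $\gamma$, so \Cref{th.old} applied to $y,y_s$ yields a coupling $\pi\in\Pi(\mu,\,y_s\triangleleft\varphi)$ supported on pairs at distance exactly $|r-s|$; being a coupling, $\pi$ is automatically carried by $\mathrm{supp}\,\mu\times\mathrm{supp}(y_s\triangleleft\varphi)\subseteq S(x',r)\times S(x',|s|)$.

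The crux, and the step I expect to require the only genuine (if brief) argument, is a metric rigidity claim: if $w\in S(x',r)$, $w'\in S(x',|s|)$ and $d(w,w')=|r-s|$, then $w'=F_s(w)$. Here the three distances saturate a triangle inequality among $x',w,w'$: for $0\le s\le r$ one has $d(x',w')+d(w',w)=d(x',w)$; for $s>r$ one has $d(x',w)+d(w,w')=d(x',w')$; and for $s<0$ one has $d(w,x')+d(x',w')=d(w,w')$. In each case one of the three points lies on a minimizing geodesic joining the other two, and since every segment in sight has length at most $r+\varepsilon$, hence below the injectivity radius, that minimizing geodesic is unique. This pins $w'$ down as $\gamma_{x',w}(s)=F_s(w)$, uniformly over all $s\in[-\varepsilon,r+\varepsilon]$.

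It remains only to assemble the pieces. The rigidity claim shows $\pi$ is concentrated on the graph $\{(w,F_s(w))\}$; a coupling supported on the graph of a continuous map and having first marginal $\mu$ has second marginal $(F_s)_*\mu$. Since the second marginal of $\pi$ is $y_s\triangleleft\varphi$, we conclude $y_s\triangleleft\varphi=(F_s)_*\mu=\mu_{x',s}$, as desired.
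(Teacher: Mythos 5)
Your proposal is correct and follows essentially the same route as the paper's proof: two applications of \Cref{th.old} (one against the fixed point $x$ to pin the supports to the spheres $S(x',|s|)$ and $S(x',r)$, one against the pair $(y,y_s)$ to produce a coupling concentrated on pairs at distance $|r-s|$), followed by the geometric uniqueness statement forcing the coupling onto the graph of the geodesic flow. The only difference is presentational: where the paper argues the rigidity step by cases with pictures (the spheres $S(z',|r-s|)$ and $S(x',|s|)$ meeting in the single point $\gamma_{x',z'}(s)$), you phrase the same fact via saturation of the triangle inequality plus uniqueness of minimizing geodesics below the injectivity radius, which is a slightly crisper formulation of the identical content.
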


\begin{remark}
In other words, $\triangleleft\varphi$ turns a small portion of the geodesic connecting $x$ and $y$ into a small portion of the geodesic connecting $x$ and $\mu$ (see the discussion preceding the statement of the lemma).   
\end{remark}

\begin{proof}
We split it into several cases.

{\bf Case 1: $s<0$.} If $|s|=-s$ is small enough (which we assume it is), then $d(x,y_s)=-s$ and $d(y_s,y_r)=r-s$. 

As in the proofs of \Cref{le.orbits_open,le.key}, $\triangleleft\varphi$ turns $y_s$ and $y=y_r$ into measures $\nu$ and $\mu$ supported on the spheres centered at $x'$ of radii $-s$ and $r$ respectively. At the same time though, \Cref{th.old} shows that there is a $(\mu,\nu)$-coupling supported on the set of pairs of points in $X$ that are $r-s$ apart. 

Now, the smallness of $-s$ and $r$ ensure that in the following picture, for every point $z'$ on $S(x',r)$ the green spheres $S(z',r-s)$ and $S(x',-s)$ intersect in a single point, namely $z'_s=\gamma_{x',z'}(s)$ (where as before, $\gamma_{x',z'}$ is the arclength-parametrized geodesic from $x'$ to $z'$).  
\begin{equation*}
  \begin{tikzpicture}[auto,baseline=(current  bounding  box.center)]
    \coordinate (center) at (0,0);
    \draw (center) node[circle, inner sep=2pt, fill=black, label={north:{$x'$}}] (x') {};

    \node[circle,inner sep=0,minimum size={5cm},label={right:{$S(x',r)$}}](r) at (center) {};
    
    \draw[] (center) circle (2.5cm);

    \node[circle,inner sep=0,minimum size={2cm},label={[green]north:{$S(x',-s)$}}](s) at (center) {};
    
    \draw[green] (x') circle (1cm);

    \draw (r.20) node[circle, inner sep=2pt, fill=black, label={right:{$z'$}}] (z') {};

    \draw[green] (z') circle (3.5cm);

    \node[circle,inner sep=0,minimum size={7cm},label={[green]40:{$S(z',r-s)$}}](r-s) at (z') {};

    \draw (s.200) node[circle, inner sep=2pt, fill=green, label={200:{$z'_s$}}] (z's) {};

    \draw[decoration={markings, mark=at position .5 with {\arrow[line width=.5mm]{>}}}, postaction={decorate}] (z's) to node[pos=.7,auto,swap] {$\gamma_{x',z'}$} (z');

  \end{tikzpicture}
\end{equation*}
The conclusion follows from this: Every point $z'\in S(x',r)$ determines a unique point $z'_s\in S(x',-s)$ whose distance from it is $r-s$, and so the only measure $\nu$ supported on $S(x',-s)$ that could possibly admit a $(\mu,\nu)$-coupling supported on pairs of points $r-s$ away from one another is the flow of $\mu$ along geodesics through $x'$, as described in the discussion preceding this lemma.

{\bf Case 2: $0<s<r$.} This is very similar to Case 1, so we will not argue in any detail; suffice it to say that this time around the relevant picture is
\begin{equation*}
  \begin{tikzpicture}[auto,baseline=(current  bounding  box.center)]
    \coordinate (center) at (0,0);
    \draw (center) node[circle, inner sep=2pt, fill=black, label={north:{$x'$}}] (x') {};

    \node[circle,inner sep=0,minimum size={5cm},label={160:{$S(x',r)$}}](r) at (center) {};
    
    \draw[] (center) circle (2.5cm);

    \node[circle,inner sep=0,minimum size={2cm},label={[green]120:{$S(x',s)$}}](s) at (center) {};
    
    \draw[green] (x') circle (1cm);

    \draw (r.20) node[circle, inner sep=2pt, fill=black, label={right:{$z'$}}] (z') {};

    \draw[green] (z') circle (1.5cm);

    \node[circle,inner sep=0,minimum size={3cm},label={[green]40:{$S(z',r-s)$}}](r-s) at (z') {};

    \draw[decoration={markings, mark=at position .6 with {\arrow[line width=.5mm]{>}}}, postaction={decorate}] (x') to node[pos=.6,auto,swap] {$\gamma_{x',z'}$} (z');

    \draw (s.20) node[circle, inner sep=2pt, fill=green, label={200:{}}] (z's) {};

  \end{tikzpicture}
\end{equation*}
where once more the green point where the green spheres intersect is $z'_s=\gamma_{x',z'}(s)$.

{\bf Case 3: $r<s$.} This is again entirely analogous, the picture 
\begin{equation*}
  \begin{tikzpicture}[auto,baseline=(current  bounding  box.center)]
    \coordinate (center) at (0,0);
    \draw (center) node[circle, inner sep=2pt, fill=black, label={north:{$x'$}}] (x') {};

    \node[circle,inner sep=0,text depth=3.5cm,minimum size={5cm},label={}](r) at (center) {};

    \node[circle,inner sep=0,minimum size={7cm},label={[green]120:{$S(x',s)$}}](s) at (center) {};    
    
    \draw[] (center) circle (2.5cm);

    \node[circle,inner sep=0,minimum size={3.5cm}](circaux) at (center) {};

    \node[circle,inner sep=0]() at (circaux.160) {$S(x',r)$};
    
    \draw[green] (x') circle (3.5cm);

    \draw (r.20) node[circle, inner sep=2pt, fill=black, label={85:{$z'$}}] (z') {};

    \draw[green] (z') circle (1cm);

    \node[circle,inner sep=0,minimum size={2cm},label={[green]170:{$S(z',s-r)$}}](s-r) at (z') {};

    \draw (s.20) node[circle, inner sep=2pt, fill=green, label={200:{}}] (z's) {};

    \draw[decoration={markings, mark=at position .6 with {\arrow[line width=.5mm]{>}}}, postaction={decorate}] (x') to node[pos=.1,auto,swap] {$\gamma_{x',z'}$} (z's);

  \end{tikzpicture}
\end{equation*}
being a schematic description of the situation. 
\end{proof}

\begin{lemma}\label{le.close}
If the distance between $y,z\in X$ is smaller than the injectivity radius of $X$ and $\orb_y$ and $\orb_z$ are finite, then every point on the geodesic connecting $y$ and $z$ has finite orbit.     
\end{lemma}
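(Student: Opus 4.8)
The plan is to bound $|\orb_w|$ for an arbitrary point $w$ on the minimizing geodesic $\gamma$ from $y$ to $z$ by the product $|\orb_y|\cdot|\orb_z|$. Since the endpoints already have finite orbits by hypothesis, I only need to treat interior points; so fix $w=\gamma(s_0)$ with $0<s_0<r:=d(y,z)$, and set $r_1=d(w,z)=r-s_0$ and $r_2=d(w,y)=s_0$. Recall from \Cref{cor.orbits2} that $\orb_w$ consists exactly of the points of the form $w\triangleleft\varphi$ with $\varphi\in S(A)$, so it suffices to show that this set is finite.

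Fix $\varphi\in S(A)$ with $w':=w\triangleleft\varphi$ a point. First I would record that, since $z\triangleleft\varphi$ is obtained by composing the map \Cref{eq.comp} (with $x=z$, whose image is $\cC(\orb_z)$) with $\varphi$, it factors through the restriction $\cC(X)\to\cC(\orb_z)$; hence $z\triangleleft\varphi$ is a probability measure supported on the finite set $\orb_z$, and likewise $y\triangleleft\varphi$ is supported on $\orb_y$. Next I would apply \Cref{le.r+epsilon}, taking $w$ for the center and $z$ for the endpoint, so that $\gamma$ near $[w,z]$ is the geodesic appearing there. Because $r$ is smaller than the injectivity radius, the admissible range $[-\varepsilon,r_1+\varepsilon]$ can be arranged to contain $-r_2$ (choose $\varepsilon$ slightly larger than $r_2$, so that $r_1+\varepsilon=r+(\varepsilon-r_2)$ still stays below the injectivity radius). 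The lemma then realizes both $z\triangleleft\varphi=\mu_{w',r_1}$ and $y\triangleleft\varphi=\mu_{w',-r_2}$ as geodesic flows of one another through $w'$: each $p\in\mathrm{supp}(z\triangleleft\varphi)$ lies on a unique geodesic through $w'$, whose continuation to the opposite side at arclength $r_2$ is a point $q\in\mathrm{supp}(y\triangleleft\varphi)$, and $p,w',q$ are collinear.

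Finally I would use this collinearity to pin down $w'$. For such a pair $(p,q)\in\orb_z\times\orb_y$ we have $d(p,q)=d(p,w')+d(w',q)=r_1+r_2=r$, which is below the injectivity radius, so there is a unique minimizing geodesic from $q$ to $p$ and $w'$ is precisely its point at arclength $r_2$ from $q$. Thus every $w'\in\orb_w$ is the image of some pair in the finite set $\orb_z\times\orb_y$ under the single-valued map sending $(p,q)$ to the point at distance $r_2$ from $q$ on the geodesic toward $p$; consequently $|\orb_w|\le|\orb_y|\cdot|\orb_z|<\infty$. The main obstacle I anticipate is the middle step: checking that the flow description of \Cref{le.r+epsilon} can be pushed past $w$ all the way to the far endpoint (so that the images of $y$ and $z$ lie on one and the same geodesic through $w'$, giving collinearity), and that these images genuinely land inside the \emph{finite} orbits $\orb_y,\orb_z$. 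This support containment is exactly the place where the finiteness hypotheses enter and make the counting argument possible.
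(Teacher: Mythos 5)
Your argument for points on the geodesic \emph{segment} $[y,z]$ is correct, and it is essentially Step~1 of the paper's own proof: the paper likewise fixes a point $x$ on the segment, applies \Cref{le.r+epsilon} at a point $x'=x\triangleleft\varphi$ of its orbit to realize $y\triangleleft\varphi$ and $z\triangleleft\varphi$ as geodesic flows of one another through $x'$, and then counts exactly as you do -- $x'$ sits at prescribed arclength on the unique (below the injectivity radius) geodesic determined by a pair $(y',z')\in\orb_y\times\orb_z$, whence $|\orb_x|\le|\orb_y|\cdot|\orb_z|$. Your treatment of the supports (that $y\triangleleft\varphi$, $z\triangleleft\varphi$ are measures on the finite sets $\orb_y$, $\orb_z$, via \Cref{cor.orbits2}) and of the parameter range in \Cref{le.r+epsilon} (taking $\varepsilon$ slightly larger than $r_2$, which is legitimate because $r$ is \emph{strictly} below the injectivity radius) is sound.

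The gap is that this is only part of what the lemma asserts. As \Cref{rem.close} makes explicit, the conclusion concerns the \emph{entire} geodesic through $y$ and $z$, extended indefinitely in both directions, not just the segment $[y,z]$; and this stronger form is what is actually used later, in the proof of \Cref{pr.tot_geo} (for instance when the cone of rays through $\exp_s^{-1}(S')$ is exponentiated into the finite-orbit locus, one needs full geodesics, not just chords between points already known to have finite orbit). Your proposal never treats a point $x$ lying outside $[y,z]$ on the extension of $\gamma$, and your counting map is anchored to $w$ lying \emph{between} the two orbit points, so it does not literally apply there. The paper handles this in its Steps~2 and~3: for $x$ beyond $z$ with $d(x,y)$ still below the injectivity radius, one applies Cases~2/3 of \Cref{le.r+epsilon} with center $x$ (so that $z$ is interior to $[x,y]$); the same finite-pair count then bounds $\orb_x$, with $x'$ now recovered as the point at prescribed arclength on the \emph{extension} of the unique geodesic through $(y',z')$ beyond $z'$. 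Since this gains a fixed $\varepsilon>0$ of length at each stage, iterating along segments $[\gamma(n\varepsilon),\gamma(r+n\varepsilon)]$ covers the whole geodesic. Your centered-at-$w$ argument adapts to this situation with only minor changes, but as written the proof is incomplete without it.
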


\begin{remark}\label{rem.close}
The conclusion of the lemma refers to the entire geodesic, extended indefinitely in both directions, not just the geodesic segment $[y,z]$. 
\end{remark}

\begin{proof}
Let $r=d(y,z)$ and denote by $\gamma$ the arclength-parametrized geodesic starting out from $y$ towards $z$. We divide the proof into several parts. 

{\bf Step 1: The geodesic segment $[y,z]\subset\gamma$.} Fix a point $x$ on this segment and let $x'=x\triangleleft\varphi$ be a point on the orbit $\orb_x$ for $\varphi\in S(A)$ (all points on the $x$-orbit are of this form for some $\varphi$; see e.g. \Cref{cor.orbits2}).

Now, \Cref{le.r+epsilon} applied to $x$ and $x'=x\triangleleft\varphi$ shows that $\mu=y\triangleleft\varphi$ and $\nu=z\triangleleft\varphi$ are obtained from one another by flowing along geodesics through $x'$, as sketched in the picture

\begin{equation*}
  \begin{tikzpicture}[auto,baseline=(current  bounding  box.center)]
    \coordinate (center) at (0,0);
    \draw (center) node[circle, inner sep=2pt, fill=black, label={right:{$x'$}}] (x') {};


    \node[circle,inner sep=0,minimum size={6cm},label={40:{$\mu$}}](mu) at (center) {};

    \draw (mu.10) node[circle, inner sep=1pt, fill=black] () {};
    \draw (mu.20) node[circle, inner sep=1pt, fill=black] () {};
    \draw[black,thick] (mu.10) arc (10:20:3cm);

    \draw (mu.-30) node[circle, inner sep=1pt, fill=black] () {};
    \draw (mu.0) node[circle, inner sep=1pt, fill=black] () {};
    \draw[black,thick] (mu.-30) arc (-30:0:3cm);

    \draw (mu.50) node[circle, inner sep=1pt, fill=black] () {};
    \draw (mu.100) node[circle, inner sep=1pt, fill=black] () {};
    \draw[black,thick] (mu.50) arc (50:100:3cm);
    

    \node[circle,inner sep=0,minimum size={4cm},label={[red]220:{$\nu$}}](nu) at (center) {};

    \draw (nu.190) node[circle, inner sep=1pt, fill=red] () {};
    \draw (nu.200) node[circle, inner sep=1pt, fill=red] () {};
    \draw[red,thick] (nu.190) arc (190:200:2cm);

    \draw (nu.150) node[circle, inner sep=1pt, fill=red] () {};
    \draw (nu.180) node[circle, inner sep=1pt, fill=red] () {};
    \draw[red,thick] (nu.150) arc (150:180:2cm);

    \draw (nu.230) node[circle, inner sep=1pt, fill=red] () {};
    \draw (nu.280) node[circle, inner sep=1pt, fill=red] () {};
    \draw[red,thick] (nu.230) arc (230:280:2cm);
    

    \draw (mu.85) node[circle, inner sep=2pt, fill=black,label={above:{$y'$}}] (y') {};

    \draw (nu.265) node[circle, inner sep=2pt, fill=red,label={below:{$z'$}}] (z') {};

    \draw (x') to node[pos=.5,auto,swap] {$d(x,y)$} (y');
    \draw (x') to node[pos=.5,auto] {$d(x,z)$} (z');
    \draw[decoration={brace,mirror},decorate] ([xshift=-.2cm,yshift=-.2cm]y'.west) to node[pos=.55,auto,swap] {$d(y,z)$} ([xshift=-.2cm,yshift=.2cm]z'.west);

  \end{tikzpicture}
\end{equation*}
(the black and red arcs representing the supports of $\mu$ and $\nu$ respectively). In particular, $x'$ sits on at least one geodesic segment of length $r$ between points $y'\in \orb_y$ and $z'\in \orb_z$ such that $d(x',y')=d(x,y)$ and $d(x',z')=d(x,z)$. By assumption, there are only finitely many choices for the endpoints $x',y'$ of these segments, and hence only finitely many segments (because we are working at small enough length scales). Finally, this shows that there are only finitely many $z'$. 

{\bf Step 2: A geodesic segment of $\gamma$ slightly larger than $[y,z]$.} This is very similar to Step 1. Note that in the proof of the latter we used Case 1 of the proof of \Cref{le.r+epsilon}. This time around we will use Case 2 (or 3). 

Since the situation is perfectly symmetric in $y$ and $z$ we fix without loss of generality an $x$ on $\gamma$ such that $z$ is in the interior of the geodesic arc $[x,y]$.

As before, let $x'=x\triangleleft\varphi$ be a point in $\orb_x$. If $d(x,y)$ is smaller than the injectivity radius, then once more \Cref{le.r+epsilon} shows that $\nu=z\triangleleft\varphi$ is obtained from $\mu=y\triangleleft\varphi$ by flowing along geodesics through $x$:
\begin{equation*}
  \begin{tikzpicture}[auto,baseline=(current  bounding  box.center)]
    \coordinate (center) at (0,0);
    \draw (center) node[circle, inner sep=2pt, fill=black, label={right:{$x'$}}] (x') {};


    \node[circle,inner sep=0,minimum size={6cm},label={40:{$\mu$}}](mu) at (center) {};

    \draw (mu.10) node[circle, inner sep=1pt, fill=black] () {};
    \draw (mu.20) node[circle, inner sep=1pt, fill=black] () {};
    \draw[black,thick] (mu.10) arc (10:20:3cm);

    \draw (mu.-30) node[circle, inner sep=1pt, fill=black] () {};
    \draw (mu.0) node[circle, inner sep=1pt, fill=black] () {};
    \draw[black,thick] (mu.-30) arc (-30:0:3cm);

    \draw (mu.50) node[circle, inner sep=1pt, fill=black] () {};
    \draw (mu.100) node[circle, inner sep=1pt, fill=black] () {};
    \draw[black,thick] (mu.50) arc (50:100:3cm);
    

    \node[circle,inner sep=0,minimum size={4cm},label={[red]40:{$\nu$}}](nu) at (center) {};

    \draw (nu.10) node[circle, inner sep=1pt, fill=red] () {};
    \draw (nu.20) node[circle, inner sep=1pt, fill=red] () {};
    \draw[red,thick] (nu.10) arc (10:20:2cm);

    \draw (nu.-30) node[circle, inner sep=1pt, fill=red] () {};
    \draw (nu.0) node[circle, inner sep=1pt, fill=red] () {};
    \draw[red,thick] (nu.-30) arc (-30:0:2cm);

    \draw (nu.50) node[circle, inner sep=1pt, fill=red] () {};
    \draw (nu.100) node[circle, inner sep=1pt, fill=red] () {};
    \draw[red,thick] (nu.50) arc (50:100:2cm);
    

    \draw (mu.85) node[circle, inner sep=2pt, fill=black,label={above:{$y'$}}] (y') {};

    \draw (nu.85) node[circle, inner sep=2pt, fill=red,label={45:{$z'$}}] (z') {};

    \draw (z') to node[pos=.1,auto] {$d(z,y)$} (y');
    \draw (x') to node[pos=.3,auto] {$d(x,z)$} (z');
%
  \end{tikzpicture}
\end{equation*}
We conclude as in Step 1. 

{\bf Step 3: All of $\gamma$.}  An examination of the proof of Step 2 reveals that it extends the conclusion of the theorem to points on a geodesic segment of length $r+\varepsilon$ for some $\varepsilon>0$ depending only on $r$. We can keep applying this procedure to segments further and further out towards infinity, like say $[\gamma(n\varepsilon),\gamma(r+n\varepsilon)]$ for integers $n$. These will cover all of $\gamma$.    
\end{proof}

Before stating the next result we recall some terminology.

\begin{definition}\label{def.defns}
An embedded submanifold $M$ of $X$ is \define{totally geodesic} (\cite[Definition 2.80 bis]{GHL} or \cite[p. 132]{doC92}) if for every point $m\in M$ and every tangent vector to $M$ at $m$ the geodesic in $X$ in the direction of $v$ lies entirely within $M$.   

A subset $S$ of $X$ is \define{convex} if every two points in the closure of $S$ are joined by a unique minimizing geodesic segment in $X$, and the interior of that segment is contained in $S$. 

$S\subseteq X$ is \define{locally convex} if every point $s\in S$ has a convex neighborhood in $S$.    
\end{definition}

\begin{remark}
The notion of convexity in \Cref{def.defns} is called \define{strong convexity} in \cite[Chapter 3, $\S$ 4]{doC92}. 
\end{remark}

\begin{proposition}\label{pr.tot_geo}
The set $X_{\cat{fin}}$ of points of $X$ having finite orbit under $\rho$ is a closed, totally geodesic, locally convex submanifold.   
\end{proposition}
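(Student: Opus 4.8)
The plan is to extract all of the geometry from \Cref{le.close}, whose content (together with \Cref{rem.close}) is that the whole geodesic \emph{line} joining two nearby points of $X_{\cat{fin}}$ lies in $X_{\cat{fin}}$. For a point $x\in X_{\cat{fin}}$ I would introduce the \emph{direction set}
\[
 V_x=\{v\in T_xX\ |\ \exp_x(tv)\in X_{\cat{fin}}\text{ for all sufficiently small }t\},
\]
and first record the local identification $X_{\cat{fin}}\cap B(x,\varepsilon)=\exp_x(V_x\cap B(0,\varepsilon))$ for $\varepsilon$ below the injectivity radius. The inclusion $\supseteq$ is the definition of $V_x$ pushed along the whole ray via \Cref{le.close}; the inclusion $\subseteq$ says that a finite-orbit point $w$ near $x$ is reached from $x$ by a minimizing geodesic which, by \Cref{le.close} applied to the pair $x,w$, lies in $X_{\cat{fin}}$, so its initial direction lies in $V_x$. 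Reparametrizing and reversing geodesics (again invoking \Cref{rem.close}) shows that $V_x$ is a cone symmetric about the origin.

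Next I would establish local convexity. Fix $\varepsilon$ below the convexity radius of $(X,g)$, so that $B(x,\varepsilon)$ is geodesically convex in $X$. For $y,z\in X_{\cat{fin}}\cap B(x,\varepsilon)$ the minimizing segment $[y,z]$ stays in $B(x,\varepsilon)$ by convexity and in $X_{\cat{fin}}$ by \Cref{le.close}, so $X_{\cat{fin}}\cap B(x,\varepsilon)$ is convex in the sense of \Cref{def.defns}. The tangent cone at $x$ of a geodesically convex set is a convex cone, and under the identification above this cone is exactly $V_x$; a convex cone that is symmetric about the origin is a linear subspace, so $V_x\subseteq T_xX$ is a vector subspace. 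Consequently $\exp_x(V_x\cap B(0,\varepsilon))$ is an embedded submanifold with tangent space $V_x$ at $x$, and since every geodesic issuing from a point of $X_{\cat{fin}}$ in a tangent direction stays in $X_{\cat{fin}}$ by construction, the submanifold is totally geodesic. Carrying this out at each point exhibits $X_{\cat{fin}}$ as a totally geodesic, locally convex submanifold.

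The remaining and by far \emph{hardest} point is closedness, which genuinely cannot follow from the local submanifold structure: the structure above is shared by an open geodesic disk, and \Cref{le.close} only makes $X_{\cat{fin}}$ a union of \emph{complete} geodesics — but a complete geodesic can be dense (an irrational line in a flat torus), so closedness must use the dynamics rather than the geometry. Here the lever is lower semicontinuity: by \Cref{le.orbits_open} the function $x\mapsto|\orb_x|$ is lower semicontinuous, whence $|\orb_{x_0}|\le\liminf_{y\to x_0}|\orb_y|$ for $x_0\in\overline{X_{\cat{fin}}}$, so it suffices to exhibit, near each such $x_0$, finite-orbit points of \emph{uniformly bounded} orbit cardinality. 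The proof of \Cref{le.close} already provides a quantitative bound: for $x$ on a segment $[y,z]$ with $y,z\in X_{\cat{fin}}$ one has $|\orb_x|\le|\orb_y|\cdot|\orb_z|$, since each point of $\orb_x$ is pinned down by a pair of endpoints in the finite sets $\orb_y$ and $\orb_z$.

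Thus the plan for closedness is to rule out an infinite orbit accumulating finite ones. If $x_0\in\overline{X_{\cat{fin}}}$ had $|\orb_{x_0}|=\infty$, lower semicontinuity would force $|\orb_y|\to\infty$ for \emph{all} $y\to x_0$, so the finite orbits clustering at $x_0$ must blow up in size. I would then seek a contradiction by playing this mandatory blow-up against the segmentwise bound $|\orb_x|\le|\orb_y||\orb_z|$ along short geodesics, together with the ``empty or dense'' dichotomy of \Cref{le.orbits_dense} and a Baire-category argument localized to a small convex ball about $x_0$, in the spirit of \Cref{cor.max_card}. Reconciling the uniform product bound with the forced blow-up is precisely the crux, and I expect it to be the main obstacle in the whole proposition; once it is resolved, $V_x$ becomes closed, all the local statements globalize, and \Cref{pr.tot_geo} follows.
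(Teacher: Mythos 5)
Your skeleton (everything geometric should come from \Cref{le.close}, via the local identification $X_{\cat{fin}}\cap B(x,\varepsilon)=\exp_x(V_x\cap B(0,\varepsilon))$) is sound and matches the paper's use of that lemma, but the proposal has two genuine gaps, and they interlock. The first is the linearity of $V_x$. The rescaling arguments that prove convexity of tangent cones of geodesically convex sets only show that the \emph{closure} of $V_x$ is a convex cone: joining $\exp_x(tu)$ to $\exp_x(sv)$ by geodesics inside $X_{\cat{fin}}$ and taking directions from $x$ produces directions that merely \emph{converge} to the convex combinations of $u$ and $v$ as $t,s\to 0$, without ever having to hit them. Your identification of the tangent cone with $V_x$ itself (rather than with $\overline{V_x}$) is exactly the assertion that $V_x$ is closed, i.e.\ the local closedness of $X_{\cat{fin}}$ at $x$ --- the very point you defer to the end. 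So the linearity step, as written, is circular. The paper avoids this by a maximality trick: inside a small convex ball $B$ it takes a submanifold $S'\subseteq S=X_{\cat{fin}}\cap B$ of maximal dimension $n$ and shows that if $\exp_s^{-1}(S')$ were not locally linear, or if some point of $S$ lay off $\exp_s(T_sS')$, then coning off with geodesic segments (all of which lie in $X_{\cat{fin}}$ by \Cref{le.close}) would sweep out an $(n+1)$-dimensional submanifold inside $S$, contradicting maximality. This proves in one stroke that $S$ \emph{equals} $\exp_s(T_sS')\cap B$.

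The second gap is closedness, where your guiding claim --- that it ``genuinely cannot follow from the local submanifold structure'' and must use dynamics --- is wrong. The irrational line on a flat torus is not a counterexample to that implication, because it fails your own local identification: two nearby points on different strands are joined by a short minimizing geodesic that leaves the line, whereas \Cref{le.close} guarantees the identification for $X_{\cat{fin}}$. Once $X_{\cat{fin}}\cap B$ is known to coincide with $\exp_s(W)\cap B$ for a linear subspace $W\le T_sX$, it is closed in $B$; since closedness is local and every point of $\overline{X_{\cat{fin}}}$ lies well inside such a ball centered at a point of $X_{\cat{fin}}$, closedness of $X_{\cat{fin}}$ in $X$ follows with no further dynamical input. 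Your alternative dynamical route is, by your own admission, incomplete: the ingredients you cite are individually correct (lower semicontinuity of $x\mapsto|\orb_x|$ from \Cref{le.orbits_open}, and the bound $|\orb_x|\le|\orb_y|\,|\orb_z|$ implicit in Step~1 of \Cref{le.close}), and lower semicontinuity would indeed finish the job if you could exhibit finite-orbit points accumulating at $x_0$ with \emph{uniformly bounded} orbit size; but you never produce such a bound, and the proposed Baire argument has nothing to contradict, since in this proposition (unlike \Cref{cor.max_card}) one cannot assume that points near $x_0$ have finite orbits at all. As it stands, the proposal does not prove the proposition; repairing it essentially forces you back to the paper's maximal-dimension argument, after which the ``hard part'' you isolated evaporates.
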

\begin{proof}
Let $B\subset X$ be a small convex open ball. Since $X$ can be covered with such balls \cite[Chapter 3, Proposition 4.2]{doC92}, it suffices to show that the intersection $S=X_{\cat{fin}}\cap B$ is a closed, connected, totally geodesic and convex submanifold of $B$. 

On the other hand, once we prove that $S$ is a closed submanifold of $B$, connected, convexity and the fact that $S$ is totally geodesic will follow: 

Connectedness is immediate from \Cref{le.close}, which implies that any two points in $S$ are connected by some geodesic segment contained in $S$, as is convexity. Finally, the totally geodesic property follows from the closure of $S$ in $B$ and the fact that a geodesic ray tangent to $S$ and emanating from $s\in S$ is a limit of geodesic segments connecting $s$ to other points in $S$, and all such segments are contained in $S$ by \Cref{le.close}. Hence, it remains to show that $S$ is indeed a closed submanifold. 

Now let $n$ be the largest dimension of a submanifold $S'$ of $B$ contained in $S$ and choose an interior point $s$ of $S'$. Consider the preimage $\exp_s^{-1}(S')\subset T_sX$ of $S'$ through the exponential map based at $s$. Since the exponential map is a diffeomorphism in some neighborhood of $s$ in $T_sX$ (say in $\exp_s^{-1}(B)$), this preimage is a submanifold of the tangent space $T_sX$ containing zero. 

{\bf Claim: Some small neighborhood of $s$ in $\exp_s^{-1}(S')$ coincides with a small neighborhood of a linear subspace of $T_sX$.}

Indeed, otherwise the cone in $T_sX$ spanned by $\exp_s^{-1}(S')$ (i.e. the union of the rays passing through the points of the preimage) would contain some $(n+1)$-dimensional submanifold; in the picture below $\exp_s^{-1}S$ is depicted as a curve and the so that $n=1$ and the $(n+1)$-dimensional manifold we referred to just now will be swept by the straight segments based at $0\in T_sX$. 
\begin{equation*}
  \begin{tikzpicture}[auto,baseline=(current  bounding  box.center)]
    \draw (0,0) node[circle, inner sep=2pt, fill=black, label={120:{$0$}}] (s) {};
    \draw (2,-.2) node[circle, inner sep=2pt, fill=black, label={}] (1) {};
    \draw (2.3,-1) node[circle, inner sep=2pt, fill=black, label={}] (2) {};
    \node (aux1) at (-1,-1) {};
    \node (aux2) at (1.8,-1.5) {};
    \draw (aux1) .. controls +(.25,.5) and +(-.5,-.2)  .. (s);
    \draw (s) .. controls +(.5,.2) and +(-.5,.5)  .. node[pos=.5,auto] {$ \exp_s^{-1}(S')$} (1);
    \draw (1) .. controls +(.5,-.5) and +(.08,.2)  .. (2);
    \draw (2) .. controls +(-.08,-.2) and +(.08,.1)  .. (aux2);
    \draw (s) to (1);
    \draw (s) to (2);
  \end{tikzpicture}
\end{equation*}
Since the image of this cone through $\exp_s$ is contained in $S$ by \Cref{le.close}, this would contradict the maximality of $n$. This settles the claim. 

We now know that in a neighborhood of $s$ the manifold $S'$ coincides with the image through $\exp_s$ of $T_sS'$. Enlarging $S'$ if necessary, we can assume that
\begin{equation*}
S' = \exp_s(T_sS')\cap B.   
\end{equation*}
{\bf Claim: $S''=S$.} In particular $S$ is a submanifold, which as observed earlier will complete the proof of the proposition. 

The inclusion $S'\subseteq S$ follows from \Cref{le.close} and the previous claim. 

To prove the other inclusion, consider the geodesic segment $\gamma$ from some arbitrary $t\in S$ to $s$. If $\gamma$ is not contained in $S'$, then the tangent to it at $s$ lies outside the subspace $T_sS'\le T_sX$ and the geodesic segments connecting $t$ to points in a small neighborhood of $s$ in $S'$ sweep out an $(n+1)$-dimensional manifold contained in $S$ by \Cref{le.close}. 
\begin{equation*}
  \begin{tikzpicture}[auto,baseline=(current  bounding  box.center)]
    \draw (0,0) node[circle, inner sep=2pt, fill=black, label={120:{$s$}}] (s) {};
    \draw (2,-.5) node[circle, inner sep=2pt, fill=black, label={}] (s') {};
    \draw (6,.5) node[circle, inner sep=2pt, fill=black, label={right:{$t$}}] (t) {};
    \node (aux1) at (-1,-1) {};
    \node (aux2) at (3,-2) {};
    \draw (s)  .. controls +(.5,-.4) and +(-.5,0) .. (s');
    \draw (s')  .. controls +(.5,0) and +(-2,-1) .. node[pos=.5,auto] {$\gamma$}  (t);
    \draw (aux1) .. controls +(.25,.5) and +(-.5,-.2)  .. (s);
    \draw (s) .. controls +(.5,.2) and +(-.5,.5)  .. node[pos=.5,auto] {$S'$} (s');
    \draw (s') .. controls +(.5,-.5) and +(-.1,.1)  .. (aux2);
  \end{tikzpicture}
\end{equation*}
Once more, this contradicts the maximality of $n$. This shows that as desired, the arbitrary point $t\in S$ is contained in $S''$.   
\end{proof}

We remind the reader that a \define{closed geodesic} in $X$ is a closed curve that is everywhere a geodesic (in other words, a geodesic $\gamma:\bR\to X$ such that $\gamma(s) = \gamma(t)$ for some $s\ne t$). A result of E. Cartan states that for any non-trivial element $a$ of the fundamental group $\pi_1 = \pi_1(X)$ there is a shortest loop in $X$ representing $a$ that is a closed geodesic (\cite[Chapter 12, Theorem 2.2]{doC92}).  

Denote by $\wt{X}$ the universal cover of $X$, equipped with the projection $p:\wt{X}\to X$; it is a complete, connected and simply connected Riemannian manifold. We denote its own geodesic distance by $d$ again, relying on context to distinguish this from the geodesic distance on $X$. 

$\pi_1$ acts by deck transformations on $\wt{X}$; following \cite[$\S$ II.3]{Bal95} or \cite[$\S$ 6.1]{BGS}, for $a\in \pi_1$ and a point $x\in \wt{X}$ we denote by $d_a(x)$ the distance $d(x,ax)$. If $\gamma$ is a closed geodesic in the class of $a\in \pi_1$ as in the discussion above, then $a$ fixes any lift $\wt{\gamma}$ of $\gamma$ in $\wt{X}$ (see the proof of \cite[Chapter 12, Proposition 2.6]{doC92}), and the length of $\gamma$ is exactly 
\begin{equation*}
\inf_{x\in \wt{X}}d_a(x) = \min_{x\in \wt{X}} d_a(x).  
\end{equation*}

Let us now specialize to the situation covered by \Cref{th.main}, when the sectional curvature of $(X,g)$ is everywhere (strictly) negative. In that case, for every non-trivial element of $\pi_1(X)$ the corresponding closed geodesic referred to above is unique (this is essentially the content of \cite[Chapter 12, Lemma 3.3]{doC92}). 

The space of closed geodesics of a given length is compact with respect to the topology induced by the Hausdorff distance on closed subsets of $X$. If two such geodesics are too close to one another, then they must be in the same homotopy class. The uniqueness ensured by the negative curvature condition implies that the space of closed geodesics of a given length is also discrete, and hence finite. In particular, there are only finitely many closed geodesics of minimal length.

\begin{definition}
A closed geodesic in $X$ is \define{minimal} if it has minimal length among all closed geodesics. 

A point $x\in X$ is \define{minimal} if it lies on one of the finitely many minimal closed geodesics. 
\end{definition}

The notion of minimal point will be helpful for us for the following reason.

\begin{lemma}\label{le.min_geo_inv}
Under the hypotheses of \Cref{th.main} the orbit of any minimal point of $X$ consists of minimal points.  
\end{lemma}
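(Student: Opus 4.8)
The plan is to reduce the statement to a purely metric assertion about loop lengths and then transport the closed geodesic through $x$ to $y$ using the coupling technology of \Cref{th.old} together with the geodesic-flow transport of \Cref{le.r+epsilon}. First I would record the characterization of minimal points that I intend to use: a point $w\in X$ is minimal if and only if the shortest homotopically nontrivial loop based at $w$ has length $\ell_{\min}$, the common length of the minimal closed geodesics. Equivalently, writing $L(w)=\min_{1\ne a\in\pi_1}d(\wt w,a\wt w)$ for the minimal displacement at a lift $\wt w\in\wt X$ (the length of the shortest noncontractible loop at $w$), one always has $L(w)\ge \ell_{\min}$, with equality exactly when $w$ is minimal. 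Since by \Cref{cor.orbits2} every point of $\orb_x$ is of the form $y=x\triangleleft\varphi$ for some $\varphi\in S(A_r)$ with $x\triangleleft\varphi$ a single point, it therefore suffices to exhibit a noncontractible loop based at $y$ of length at most $\ell_{\min}$: minimality of $\ell_{\min}$ then forces its length to equal $\ell_{\min}$ and, as explained below, forces it to be a minimal closed geodesic through $y$.

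To build such a loop I would discretize the minimal closed geodesic $\gamma$ through $x$. Choose consecutive points $x=p_0,p_1,\dots,p_m=x$ once around $\gamma$, close enough that each $d(p_i,p_{i+1})$ is below the injectivity radius and is realized by the corresponding sub-arc of $\gamma$, so that $\sum_i d(p_i,p_{i+1})=\ell_{\min}$. Applying \Cref{th.old} to each consecutive pair produces couplings of $p_i\triangleleft\varphi$ and $p_{i+1}\triangleleft\varphi$ supported on pairs of points exactly $d(p_i,p_{i+1})$ apart; since $p_0=p_m=x$ and $x\triangleleft\varphi=y$ is a point, I can chain selections $y=y_0,y_1,\dots,y_m=y$ with $d(y_i,y_{i+1})=d(p_i,p_{i+1})$, the endpoint being forced back to $y$ because $\mathrm{supp}(p_m\triangleleft\varphi)=\{y\}$. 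Concatenating the minimizing geodesic segments $[y_i,y_{i+1}]$ then yields a loop at $y$ of length exactly $\ell_{\min}$. \Cref{le.r+epsilon} supplies the additional local information that near $y$ the transport is by honest geodesic flow through $y$ (the images of $\gamma(\pm\varepsilon)$ being antipodal through $y$), so the loop may be arranged to be geodesic, with no corner, at $y$ itself.

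The final step upgrades the loop to a minimal closed geodesic. Granting that it is homotopically nontrivial, any corner of the concatenation could be smoothed to produce a strictly shorter loop in the same free homotopy class; since its length already equals the global minimum $\ell_{\min}$ of closed-geodesic lengths, no corner can occur, so the loop is a genuine closed geodesic of length $\ell_{\min}$ through $y$, hence a minimal one, and $y$ is minimal. I expect the real difficulty---the heart of the argument---to be proving that the constructed loop is homotopically nontrivial, i.e. ruling out that the chained selection collapses to a contractible loop. To control this I would exploit the extra rigidity coming from \Cref{th.old} applied to the pairs $(x,p_i)$, which forces $d(y,y_i)=d(x,p_i)$ for every $i$, the convexity of distance functions in the $\mathrm{CAT}(0)$ universal cover $\wt X$, and the reversibility of the orbit correspondence furnished by \Cref{cor.orbits} (a state $\ol\varphi$ with $y\triangleleft\ol\varphi=x$). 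Finiteness of the set of minimal closed geodesics then confines $\orb_x$ to their compact union, as claimed.
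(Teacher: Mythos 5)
Your reduction and your chaining construction are both correct as far as they go: the characterization of minimal points via minimal displacement, the selection (from the couplings of \Cref{th.old}) of a cyclic chain $y=y_0,y_1,\dots,y_m=y$ with $d(y_i,y_{i+1})=d(p_i,p_{i+1})$ and $d(y,y_i)=d(x,p_i)$, and the resulting loop at $y$ of length exactly $\ell_{\min}$ are all sound. But the step you yourself flag as ``the real difficulty''---noncontractibility of that loop---is a genuine gap, not a technicality, and the tools you list cannot close it. All of your constraints are satisfied by contractible configurations. Concretely, write $\ell_{\min}=2r$ and suppose your partition contains the antipode $p_k=\gamma(r)$ of $x$; let $\sigma$ be any unit-speed geodesic emanating from $y$ and set $y_i=\sigma(s_i)$ for $s_i\le r$ and $y_i=\sigma(2r-s_i)$ for $s_i\ge r$ (where $s_i$ is the arclength position of $p_i$). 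This ``folded'' out-and-back chain satisfies every consecutive-distance constraint, every radial constraint $d(y,y_i)=\min(s_i,2r-s_i)=d(x,p_i)$, and is perfectly compatible with convexity in $\wt X$ and with the reversibility statement of \Cref{cor.orbits}; yet its concatenated loop is contractible. If the antipode is excluded from the partition, pure folding is ruled out but ``thin wedge'' configurations lying on two rays from $\wt y$ at a small positive angle (of order $\sqrt{\alpha\beta}/r$, where $\alpha,\beta$ measure how the straddling side misses the antipode) still satisfy everything. Whether the actual supports and couplings permit such folded selections is precisely the question, and nothing you have assembled answers it: the couplings of \Cref{th.old} carry distance information only, never angle information at the intermediate vertices $y_1,\dots,y_{m-1}$.

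Note also that this cannot be patched locally: \Cref{le.r+epsilon} requires the base point of the geodesic to be transported to a \emph{point}, so it applies at $x$ (whose image is $y$) but not at any intermediate $p_i$ (whose images are measures); consequently your use of it removes the corner only at the base vertex $y$, which is exactly the one vertex where a corner is harmless for Cartan's shortening argument. The paper's proof is organized so that all angle control is extracted at the one place it is available. It takes the antipode $\gamma(r)$ of $x$ and applies \Cref{le.r+epsilon} to \emph{both} halves of $\gamma$, crucially including the case $s<0$ of that lemma: this says the transport of $\gamma$ across $x$ is an honest geodesic flow through $x'=x\triangleleft\varphi$ with no reversal of direction. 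The result is, for each $y'\in\mathrm{supp}\left(\gamma(r)\triangleleft\varphi\right)$, a loop of length $2r$ through $x'$ that is geodesic everywhere except possibly at its base point $y'$. Such a loop is automatically noncontractible in nonpositive curvature---a contractible one would lift to a nonconstant path in the Hadamard manifold $\wt{X}$, geodesic away from its two coinciding endpoints, contradicting that geodesics in $\wt{X}$ are globally minimizing---and Cartan's theorem then forces it to be a closed geodesic of minimal length. Repairing your argument means propagating exactly this flow/collinearity information around the loop, which in effect collapses your construction back into the paper's two-arc argument based at $x$.
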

\begin{proof}
Let $x$ be a point on the minimal-length closed geodesic $\gamma$, and let $y\in \gamma$ be the point farthest from $x$. In other words, $r=d(x,y)$ is half the length of $\gamma$ and also equal to the injectivity radius of $X$. 

Let $\varphi\in S(A)$ be a state such that $x'=x\triangleleft \varphi$ is a point. As in the proof of \Cref{le.orbits_open}, the probability measure $y\triangleleft\varphi$ is supported on the sphere $S(x',r)$. Moreover, applying \Cref{le.r+epsilon} to both halves of the geodesic $\gamma$ connecting $x$ and $y$ it follows that there are geodesic loops of length $2r$ based at every point in the support of $y\triangleleft\varphi$ (and passing through $x'$). 

By the minimality of the closed geodesic length $2r$, all of the above loops must be closed geodesics (otherwise there would be strictly shorter closed geodesic representatives in their respective homotopy classes, as in the proof of \cite[Chapter 12, Theorem 2.2]{doC92}). 
\end{proof}

\begin{lemma}\label{le.fin_nonempty}
Under the hypotheses of \Cref{th.main} the set $X_{\cat{fin}}\subseteq X$ of points with finite orbit is non-empty. 
\end{lemma}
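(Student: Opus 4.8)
The plan is to find the required finite orbit among the \emph{minimal} points, so the first task is to see that there are any: a compact manifold of strictly negative curvature has infinite fundamental group (by Cartan--Hadamard its universal cover is diffeomorphic to $\bR^{\dim X}$, so it cannot be compact once $\dim X\ge 2$), hence has nontrivial free homotopy classes and therefore minimal closed geodesics. The set $\Gamma$ of minimal points is thus a nonempty finite union of embedded circles, and by \Cref{le.min_geo_inv} it is invariant in the strong sense that $\orb_x\subseteq\Gamma$ for every minimal $x$. Since $\orb_x$ is automatically closed in the compact set $\Gamma$, it will suffice to produce a single minimal point whose orbit is finite, i.e. discrete.

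The structural input I would exploit is a sharpening of \Cref{le.min_geo_inv}: in its proof, for any state $\varphi$ with $x'=x\triangleleft\varphi$ a point, the image $y\triangleleft\varphi$ of the antipode $y$ of $x$ is supported on the \emph{finite} set of antipodal points of $x'$ along the (finitely many) minimal closed geodesics through $x'$. Coupled with \Cref{le.r+epsilon}, which says $\triangleleft\varphi$ transports the minimal geodesic through $x$ onto this finite configuration of minimal geodesics through $x'$, this gives a rigid, finite-to-finite correspondence between $\orb_x$ and $\orb_y$ and confines all of the relevant dynamics to the finite collection $\gamma_1,\dots,\gamma_k$ of minimal closed geodesics. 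The goal is then to upgrade this rigidity to the statement that $\orb_x$ cannot accumulate: were it infinite it would, by homogeneity of the orbit, be a perfect subset of some circle $\gamma_i$, and I would try to use the geodesic-transport description above to show that such an orbit must in fact sweep out an entire minimal closed geodesic, so that the action realizes a full circle of isometric symmetries of $\gamma_i$.

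The hard part, and the step I expect to be the genuine obstacle, is excluding this last scenario; it is precisely here that strict negative curvature must be used in an essential, global way, since everything up to this point is compatible with a continuum of symmetries. The cleanest route I can see is to argue that a full-circle minimal orbit would make the induced coaction of $A$ on $\cC(\gamma_i)\cong\cC(S^1)$ a transitive isometric coaction; invoking the classicality of the quantum isometry group of the circle, this would factor through a quantum-group surjection $A\to\cC(SO(2))$ rotating $\gamma_i$, and composing with $\rho$ would yield a genuine nontrivial isometric $SO(2)$-action on $(X,g)$, hence a nonzero Killing field---impossible on a compact manifold of negative Ricci curvature by Bochner's theorem. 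Two points would need care: promoting a merely infinite (a priori totally disconnected) orbit to an honest circle, and checking that isometry of the coaction is inherited by the induced classical circle action. Once the continuum case is ruled out, every minimal orbit is discrete, hence finite, and in particular $X_{\cat{fin}}\neq\emptyset$.
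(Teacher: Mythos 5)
Your strategy of locating a finite orbit \emph{on} the minimal closed geodesics themselves is genuinely different from the paper's, but it has two gaps, each of which is fatal as the argument stands. The first is the external ingredient ``classicality of the quantum isometry group of the circle.'' In the framework in force here the relevant notion is the \emph{metric} one of \cite{Gos12}, and there is no theorem asserting that an isometric quantum action on $(S^1,d_{\mathrm{geo}})$ --- let alone on a finite union of possibly intersecting circles, which is what $\Gamma$ is, with the metric induced from $X$ rather than the intrinsic one --- must be classical. The rigidity result of \cite{DasGos13} concerns smooth, Riemannian-isometric actions, and as the introduction emphasizes, the metric and Riemannian notions of quantum isometry are only conjecturally equivalent. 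Worse, the circle is flat, so it is precisely the kind of manifold to which neither \cite{DasGos13} nor this paper's negative-curvature techniques apply: you are assuming an unproved instance of Goswami's rigidity question that is not known to be any easier than the statement being proved.

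The second gap is structural and would persist even if circle rigidity were granted: the claimed ``quantum-group surjection $A\to\cC(SO(2))$'' has the sub/quotient duality backwards. Restricting $\rho$ to the invariant set $\Gamma$ and passing to the faithful version (\Cref{rem.faithful}) produces a $C^*$-\emph{sub}algebra $A'\subseteq A$, generated by the coefficients of the restricted coaction; classicality would make $A'$ commutative, say $A'\cong\cC(G')$. But a sub-Hopf-$C^*$-algebra of $A$ is the analogue of a \emph{quotient} group (classically it is the function algebra of the image of $G$ in $\mathrm{Isom}(\Gamma)$, sitting inside $\cC(G)$), not of a subgroup, and it yields no surjection $A\to\cC(SO(2))$. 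The implication you need --- ``$\cC(\bT)$ embeds as a sub-Hopf-$C^*$-algebra of $A$, hence $A$ surjects onto $\cC(\bT)$'' --- is false for general Kac-type quantum groups: $A=C^*(\Gamma)$ for a discrete group $\Gamma$ with an infinite-order element but trivial abelianization (e.g.\ $\Gamma=SL(3,\bZ)$) contains $\cC(\bT)$ as a sub-Hopf-$C^*$-algebra yet admits no quantum-group surjection onto it. Without that surjection there is no induced $SO(2)$-action on $X$, hence no Killing field and no Bochner contradiction; a circle rotating $\gamma_i$ is perfectly consistent with the geometry of $\Gamma$ alone, and the contradiction must come from $X$, which you cannot reach. (The promotion of an infinite closed orbit to all of $\gamma_i$ is also unresolved --- a perfect subset of a circle can be a Cantor set --- though that step would indeed follow if classicality of the restricted action were available.) The paper sidesteps all of this by never analyzing the dynamics on the minimal geodesics: it uses them only as an orbit-invariant anchor $Y$ (\Cref{le.min_geo_inv}), takes a point at \emph{maximal} distance from $Y$, observes that its whole orbit stays in the set of farthest points (\Cref{le.dist_between_orbits}), and proves that this set is finite via strict convexity of distance functions in negative curvature (\Cref{le.max_dist_fin}).
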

\begin{proof}
Let $x\in X$ be a point whose distance $\ell$ to the union $Y$ of minimal closed geodesics is as large as possible, and $x'=x\triangleleft\varphi$, $\varphi\in S(A)$ a point in the orbit of $x$. By \Cref{le.dist_between_orbits,le.min_geo_inv} the distance from $x'$ to $Y$ is again $\ell$. Hence, it suffices to prove that the set
\begin{equation*}
\{z\in X\ |\ d(z,Y)=\max_{z'\in X}d(z',Y)\}  
\end{equation*}
is finite; this is what the next lemma does.   
\end{proof}

\begin{lemma}\label{le.max_dist_fin}
In the setting of \Cref{th.main}, let $Y\subseteq X$ be a closed finite union of connected totally geodesic submanifolds. The set of points $x\in X$ where the maximal distance from $Y$ is achieved is finite.  
\end{lemma}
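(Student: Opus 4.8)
The plan is to pass to the universal cover and reduce the statement to a discreteness assertion, which I then establish using the uniform strict convexity of distance functions that strictly negative curvature provides. Write $M=\max_{x\in X}d(x,Y)$, which is attained by compactness, and let $F=\{x\in X\mid d(x,Y)=M\}$ be the maximizer set; note $F=f^{-1}(M)$ is closed for $f=d(\cdot,Y)$. We may assume $M>0$: otherwise $Y=X$ (a finite union of submanifolds covering the connected $X$ forces one component to be open and closed, hence all of $X$), which is excluded since the components are proper. Lifting to the complete, simply connected cover $\wt X$ with projection $p$, and writing $\wt Y=p^{-1}(Y)$, the covering-map identity $d\bigl(p(\tilde x),Y\bigr)=d(\tilde x,\wt Y)$ shows that $\tilde f:=d(\cdot,\wt Y)=f\circ p\le M$ everywhere on $\wt X$, and that $\wt F:=\tilde f^{-1}(M)=p^{-1}(F)$. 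Since $p$ is a local homeomorphism, it suffices to prove that $\wt F$ is \emph{discrete}: being closed and discrete in (the cocompact quotient of) $\wt X$, its image $F$ is then closed and discrete in the compact space $X$, hence finite.

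The geometric input is as follows. Each connected component $Z$ of $\wt Y$ is a complete totally geodesic submanifold of the Hadamard manifold $\wt X$, hence convex, so the nearest-point projection onto $Z$ is unique and $g_Z:=d(\cdot,Z)$ is smooth off $Z$ with gradient the outward unit radial field. Because the sectional curvature satisfies $K\le\kappa<0$, Hessian comparison gives a constant $c>0$ (depending only on $\kappa$ and $M$) with $\nabla^2 g_Z\ge c\,(g-dg_Z\otimes dg_Z)$ on the region where $0<g_Z\le M$; that is, $g_Z$ is uniformly strictly convex in directions orthogonal to its radial direction. Fix $\tilde x_0\in\wt F$. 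By proper discontinuity of the deck action and compactness of $Y$, only finitely many components $Z_1,\dots,Z_k$ of $\wt Y$ meet the ball $B(\tilde x_0,M+1)$, and among these exactly those at distance $M$ (all others being at distance $>M$) are the nearest components; near $\tilde x_0$ one has $\tilde f=\min_{1\le j\le k}g_{Z_j}$. Set $g_j:=g_{Z_j}$ and $v_j:=\nabla g_j(\tilde x_0)$, a unit vector.

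The crux is the following claim: there is no nonzero $w\in T_{\tilde x_0}\wt X$ with $\langle v_j,w\rangle\ge 0$ for every nearest $j$. To see it, flow along $t\mapsto\exp_{\tilde x_0}(tw)$ for small $t>0$ and Taylor-expand each $g_j$. If $\langle v_j,w\rangle>0$ the linear term forces $g_j>M$; if $\langle v_j,w\rangle=0$ then $w$ is orthogonal to the radial direction $v_j$, so the Hessian lower bound gives $g_j(t)\ge M+\tfrac{c}{2}\lvert w\rvert^2 t^2+o(t^2)>M$. Either way every $g_j$, and hence (using that the non-nearest components stay $>M$ near $\tilde x_0$) the minimum $\tilde f$, strictly exceeds $M$ for small $t>0$, contradicting $\tilde f\le M$. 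With the claim in hand, discreteness follows: if maximizers $\tilde x_n\to\tilde x_0$ in $\wt F$ existed, let $w$ be a limit of the unit directions from $\tilde x_0$ to $\tilde x_n$; since $g_j\ge\tilde f$ and $\tilde f(\tilde x_n)=M$, each $g_j(\tilde x_n)-M\ge 0$, and dividing the first-order expansion by the distance and passing to the limit yields $\langle v_j,w\rangle\ge 0$ for all $j$, contradicting the claim. Hence $\tilde x_0$ is isolated in $\wt F$.

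I expect the main obstacle to be organizing the argument around the \emph{min-of-convex-functions} structure of $\tilde f$ rather than a single smooth distance: the maximum is forced to occur precisely where several components tie at distance $M$, so the whole argument lives at these non-smooth ``ridge'' points, and one must keep careful track of which finitely many components are active. The subtle borderline case is a direction $w$ that is radial for some active component (so its first-order term vanishes); this is exactly where the strict-negativity of the curvature is indispensable, since only the perpendicular Hessian bound coming from $K\le\kappa<0$ produces the strict increase needed to contradict the global bound $\tilde f\le M$. The remaining technical points (uniqueness and smoothness of nearest-point data in the Hadamard cover, and finiteness of the active components) are standard consequences of nonpositive curvature and proper discontinuity.
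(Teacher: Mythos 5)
Your proof is correct, and while it follows the same global strategy as the paper (pass to the universal cover, note that only finitely many components of $p^{-1}(Y)$ are relevant near a given maximizer, and use the strict convexity that strictly negative curvature forces on distance-to-totally-geodesic-submanifold functions to show the maximizer set is discrete, hence finite), the key local step is implemented quite differently. The paper argues synthetically: given two nearby ``liminal'' points $\widetilde{x},\widetilde{y}$, it extends the geodesic through them past $\widetilde{y}$ to a point $\widetilde{z}$ and invokes strict convexity of each $d_i^j=d(\cdot,\widetilde{Y}_i^j)$ along that geodesic to conclude $d_i^j(\widetilde{z})>\ell$ for every tube meeting a small ball, contradicting that the tubes cover it. You instead work infinitesimally at a single maximizer: you split the relevant components into active ones (at distance exactly $M$) and inactive ones (handled by continuity), and rule out any direction $w$ with $\langle \nabla g_j,w\rangle\ge 0$ for all active $j$ via a first-order/second-order dichotomy, the Hessian comparison supplying the strict second-order increase exactly in the radial-degenerate case $\langle \nabla g_j,w\rangle=0$. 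This buys you something real: the paper's write-up asserts both that $d_i^j(\widetilde{x})=\ell$ for \emph{all} tubes meeting the small ball (only $\ell\le d_i^j(\widetilde{x})<\ell+\varepsilon$ is justified) and that $d_i^j$ is strictly convex along \emph{every} geodesic (false along geodesics radial to $\widetilde{Y}_i^j$, where the function is piecewise linear); your active/inactive decomposition and your case analysis at $\langle v_j,w\rangle=0$ versus $>0$ are precisely the repairs these two points need, at the cost of invoking smoothness of $g_Z$ off $Z$ and Hessian comparison rather than purely synthetic convexity. One small inaccuracy on your side: the uniform bound $\nabla^2 g_Z\ge c\,(g-dg_Z\otimes dg_Z)$ cannot hold with a single $c>0$ on all of $\{0<g_Z\le M\}$, since the tangential principal curvatures of the equidistant tubes degenerate like $\sqrt{-\kappa}\tanh(\sqrt{-\kappa}\,r)\to 0$ as $r\to 0$; this is harmless because you only ever use the bound at points where $g_Z$ equals (or is near) $M$, so you should simply state it on a region $\{M/2\le g_Z\le M+1\}$, say.
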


\begin{remark}
We need to allow unions of submanifolds rather than just plain submanifolds in order to be able to apply \Cref{le.max_dist_fin} in the proof of \Cref{le.fin_nonempty}. This is because in principle, two different closed geodesics might intersect.  
\end{remark}

\begin{proof}
We may as well assume $Y\subset X$ is proper, i.e. $\ell>0$ .Write $Y$ as a union of finitely many connected totally geodesic submanifolds $Y_i$ of $X$. 

$X$ can be realized as the quotient $\pi_1\backslash \wt{X}$ of the universal cover of $X$ by the fundamental group $\pi_1=\pi_1(X)$. The preimage of $Y_i$ through the quotient map $p:\wt{X}\to X$ consists of copies $\wt{Y}_i^j$ of the universal cover of $Y_i$ that are translates of one another by the action of $\pi_1$. All $\wt{Y}_i^j$ are closed, connected totally geodesic submanifolds of $\wt{X}$.   

Let
\begin{equation*}
\ell = \max_{x\in X}d(x,Y). 
\end{equation*}
Moving up to the universal cover, the maximality of $\ell$ means that the closed ``tubes'' 
\begin{equation*}
T_i^j = B\left(\wt{Y}_i^j,\ell\right) = \{\wt{x}\in\wt{X}\ |\ d\left(\wt{x},\wt{Y}_i^j\right)\le \ell\}  
\end{equation*}
cover $\wt{X}$ sharply, in the sense that for no $\varepsilon>0$ is the union
\begin{equation*}
\bigcup_{i,j} B\left(\wt{Y}_i^j,\ell-\varepsilon\right)  
\end{equation*}
all of $\wt{X}$. Moreover, the set $S(Y,\ell)\subset X$ of points whose distance from $Y$ is precisely $\ell$ is the image through $p:\wt{X}\to X$ of the set of points $S\left(p^{-1}(Y),\ell\right)$ that are not in the interior of any of the tubes $T_i^j$. We will refer to such points as \define{liminal}.

We will show that the set $S(Y,\ell)$ is discrete; since it is also closed, it must be finite. To this end, fix $x\in S(Y,\ell)$ and let $\wt{x}\in p^{-1}(x)\subset \wt{X}$ be a lift of $x$.  

The set $\cT$ of tubes $T_i^j$ intersecting (and hence covering) a small ball $B$ around $\wt{x}$ is finite; we henceforth restrict our attention to these. 

Suppose there are liminal points $\wt{y}\in B$ arbitrarily close to but different from $\wt{x}$. Such $\wt{y}$ would have to be on the boundary or outside every $T_i^j\in \cT$. Now note that the function $d_i^j:\wt{X}\to \bR_{\ge 0}$ defined by
\begin{equation*}
\wt{X}\ni \wt{y}\mapsto d\left(\wt{y},\wt{Y}_i^j\right)
\end{equation*}
is \define{strictly convex}, in the sense that its composition with every geodesic $\gamma:\bR\to \wt{X}$ satisfies
\begin{equation*}
(d_i^j\circ\gamma)(tu+(1-t)v) < t(d_i^j\circ\gamma)(u) + (1-t)(d_i^j\circ\gamma)(v),\ \forall u,v\in \bR \text{ and } t\in[0,1]   
\end{equation*}
(`strictly' because we have strict inequality). Indeed, \cite[Corollary I.5.6]{Bal95} implies that if $X$ has non-positive curvature then the distance from a convex subset of $\wt{X}$ in the sense of \Cref{def.defns} (such as $\wt{Y}_i^j$) is convex. That proof is easily tweaked to show \define{strict} convexity when the curvature of $X$ is strictly negative.

Now, since $d_i^j\left(\wt{y}\right)\ge \ell = d_i^j\left(\wt{x}\right)$ for all $T_i^j\in\cT$, the strict convexity implies that a point $\wt{z}\in B$ on the geodesic $\gamma$ through $\wt{x}$ and $\wt{y}$ and sitting on the other side of $\wt{y}$ as compared to $\wt{x}$ 
\begin{equation*}
  \begin{tikzpicture}[auto,baseline=(current  bounding  box.center)]
    \draw (0,0) node[circle, inner sep=2pt, fill=black, label={120:{$\wt{x}$}}] (x) {};
    \draw (2,-.5) node[circle, inner sep=2pt, fill=black, label={above:{$\wt{y}$}}] (y) {};
    \draw (6,.5) node[circle, inner sep=2pt, fill=black, label={right:{$\wt{z}$}}] (z) {};
    \draw (1,-2) node[circle, inner sep=0pt, fill=black, label={[red]above:{$T_i^j$}}] () {};
    \node (aux1) at (-1,-1) {};
    \node (aux2) at (3,-2) {};
    \node (aux3) at (-1,-2) {};
    \node (aux4) at (3,-3) {};
    \draw (x)  .. controls +(.5,-.4) and +(-.5,0) .. (y);
    \draw (y)  .. controls +(.5,0) and +(-2,-1) .. node[pos=.5,auto] {$\gamma$}  (z);
    \draw[red] (aux1) .. controls +(.25,.5) and +(-.5,-.2)  .. (x);
    \draw[red] (x) .. controls +(.5,.2) and +(-.5,.5)  .. (s');
    \draw[red] (y) .. controls +(.5,-.5) and +(-.1,.1)  .. (aux2);
    \draw[red] (aux3) .. controls +(1,-1) and +(-1,-1)  .. (aux4);
  \end{tikzpicture}
\end{equation*}
will satisfy $d_i^j\left(\wt{z}\right)>\ell$ for all $T_i^j\in \cT$. But this means that $\wt{z}$ is not contained in any member of $\cT$, contradicting the fact that these tubes cover $B$. 
\end{proof}

\begin{proof_of_main}
We have to show that the set $X_{\cat{fin}}\subseteq X$ consisting of points with finite orbit actually coincides with $X$; this will be a simple matter of putting together the various pieces of the argument we have sketched so far. 

We know from \Cref{le.fin_nonempty,pr.tot_geo} that $X_{\cat{fin}}$ is a non-empty closed totally geodesic submanifold of $X$. But then, if it were proper, \Cref{le.max_dist_fin} would ensure the existence of a point $x\in X\setminus X_{\cat{fin}}$ having finite orbit and hence lead to a contradiction. The conclusion $X_{\cat{fin}}=X$ follows. 
\end{proof_of_main}


\bibliographystyle{plain}
\addcontentsline{toc}{section}{References}

\def\polhk#1{\setbox0=\hbox{#1}{\ooalign{\hidewidth
  \lower1.5ex\hbox{`}\hidewidth\crcr\unhbox0}}}

\end{document}